\theoremstyle{plain}
\newtheorem{theorem}{Theorem}[section]
\newtheorem{lemma}[theorem]{Lemma}
\newtheorem{corollary}{Corollary}
\theoremstyle{remark}
\numberwithin{equation}{section}
\def\tht{\theta}
\def\Om{\Omega}
\def\e{\varepsilon}
\def\g{\gamma}
\def\G{\Gamma}
\def\l{\lambda}
\def\p{\partial}
\def\D{\Delta}
\def\k{\varkappa}
\def\E{\mbox{\rm e}}
\def\a{\alpha}
\def\b{\beta}
\def\d{\delta}
\def\L{\Lambda}
\def\z{\zeta}
\def\vs{\varsigma}
\def\vp{\varphi}
\def\vt{\vartheta}
\def\Odr{\mathcal{O}}
\def\H{W_2}
\def\Hinf{W_\infty}
\def\Hper{W_{2,per}}
\def\Ho{W_{2,0}}
\def\di{\,d}
\def\I{\mathrm{I}}
\def\iu{\mathrm{i}}
\def\Gp{\mathring{\G}}
\def\gp{\mathring{\g}}
\def\Hpe{\mathring{\mathcal{H}}_\e}
\def\hpe{\mathring{\mathfrak{h}}_\e}
\def\Hoper{\mathring{W}_{2,per}}
\def\Ups{\Upsilon}
\def\po{\mathring{\psi}}
\def\pex{\mathring{\psi}_\e^{\mathrm{ex}}}
\def\pbl{\mathring{\psi}_\e^{\mathrm{bl}}}
\def\Psin{\mathring{\Psi}^{\mathrm{in}}}
\def\psin{\mathring{\psi}^{\mathrm{in}}}
\def\go{\mathring{\g}^1}
\def\Go{\mathring{\G}^1}
\def\Pso{\mathring{\Psi}_\e}
\DeclareMathOperator{\RE}{Re}
\DeclareMathOperator{\IM}{Im} \DeclareMathOperator{\spec}{\sigma}
\DeclareMathOperator{\essspec}{\sigma_{e}}
\newcounter{assumption}
\begin{document}
\allowdisplaybreaks

\title{\textbf{Waveguide with non-periodically alternating Dirichlet and Robin conditions: homogenization and asymptotics.}}
\author{Denis Borisov\,$^a$, Renata Bunoiu$^b$, Giuseppe Cardone$^c$}

\date{\empty}

\maketitle 
\begin{center}
\begin{quote}
\begin{enumerate}
{\it
\item[$a)$]
Bashkir State Pedagogical University,
October St.~3a, 450000 Ufa,
Russian Federation; \texttt{borisovdi@yandex.ru}
\item[$b)$]
LMAM, UMR 7122, Universit\'{e} de Lorraine et CNRS, Ile du Saulcy, F-57045 METZ Cedex 1, France; \texttt{bunoiu@univ-metz.fr}
\item[$c)$]
University of Sannio,
Department of Engineering, Corso Garibaldi,
107, 82100 Benevento, Italy; \texttt{giuseppe.cardone@unisannio.it}
}
\end{enumerate}
\end{quote}
\end{center}

\begin{abstract}
We consider a magnetic Schr\"odinger operator in a planar infinite strip with frequently and non-periodically alternating Dirichlet and Robin boundary conditions. Assuming that the homogenized boundary condition is the Dirichlet or the Robin one, we establish the uniform resolvent convergence in various operator norms and we prove the estimates for the rates of convergence. It is shown that these estimates can be improved by using special boundary correctors. In the case of periodic alternation, pure Laplacian, and the homogenized Robin boundary condition, we construct two-terms asymptotics for the first band functions, as well as the complete asymptotics expansion (up to an exponentially small term) for the bottom of the band spectrum.
\end{abstract}
%
 
\section{Introduction}

In the present paper we study a magnetic Schr\"odinger operator in an infinite planar strip with frequently non-periodically alternating Dirichlet and Robin boundary conditions, cf. fig.~\ref{fig1}. This model was formulated first in \cite{BC} for the pure Laplacian with periodically alternating Dirichlet and Neumann boundary conditions. The further studies were done in \cite{BBC1}, \cite{BBC-CR}, \cite{BBC-PMA}. The motivation for such studies as well as the reviews of previous results were done in all the details in \cite{BBC1} and \cite{BC} and here we just briefly remind it.

The motivation is threefold and comes from the waveguide theory and the homogenization theory. In the former one of the popular models is 
the waveguide with windows. Usually the waveguide is modeled by a planar strip or layer, where an elliptic operator is considered. The windows are modeled by a hole cut out on the boundary of the waveguide and coupling it with another waveguide. In the symmetric case the hole can be replaced by a segment on the boundary at which one switches the type of the boundary condition. Such models in the case of one or several finite windows were studied by various authors, see, for instance, \cite{Win2}, \cite{Win1},  \cite{Win3}, \cite{Win4}, \cite{Win5}, \cite{Win6}, \cite{Win7}. In our model the windows are modeled by the segments on the boundary with a general Robin condition. Each segment is finite, while their total number is infinite. Exactly the last fact 
is the main difference of our model in comparison with those in \cite{Win2}, \cite{Win1}, \cite{Win3}, \cite{Win4}, \cite{Win5}, \cite{Win6}, \cite{Win7}.

The second reason to study our model comes from the series of papers devoted to the problems in bounded domains with frequent alternation of boundary conditions. Not trying to cite all of them, we mention just \cite{Al1}, \cite{B03}, \cite{AHP-4}, \cite{Al3}, \cite{Al4}, \cite{Al2}, \cite{Al7},     \cite{Al6}, \cite{Al5}, \cite{AHP-23}, \cite{G-SPMJ98}, \cite{AHP-25}, \cite{AHP-24}. So, it was natural to study the case of an unbounded domain.

The third and the main part of the motivation is the recent series of papers by M.Sh. Birman, T.A. Suslina on one hand and by V.V. Zhikov and S.E. Pastukhova on the other -- see, for instance, \cite{Bir1}, \cite{BS4}, \cite{BS2}, \cite{BS5}, \cite{Su1}, \cite{Pas}, \cite{PT}, \cite{Zh3}, \cite{Zh4}, \cite{Zh5}, \cite{CPZh} and further papers of these authors. In these papers the homogenization of the differential operators with fast oscillating coefficients in unbounded domains was studied. The operators were considered as unbounded ones in appropriate Hilbert spaces. The main result was the proof of the uniform resolvent convergence as well as the leading terms of the asymptotics for the perturbed resolvents in the sense of the operator norms. The last cited series considered the problems lying in the intersection of the homogenization theory and the spectral theory of unbounded operators. The approaches of both the theories were involved and it happened to be fruitful and interesting. From this point of view our model continues the ideas of papers by M.Sh. Birman, T.A. Suslina, V.V. Zhikov, S.E. Pastukhova, since we consider the problem from operator point view, but with the boundary geometric perturbation from the homogenization theory.

\begin{figure}
\begin{center}
\includegraphics[scale=0.65]{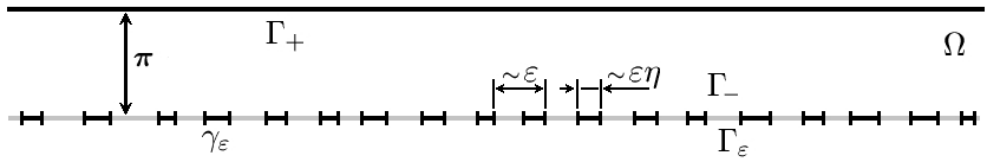}
\caption{Waveguide with non-periodically alternating boundary conditions}\label{fig1}
\end{center}
\end{figure}

In comparisons with our previous works \cite{BBC1}, \cite{BBC-CR}, \cite{BBC-PMA}, \cite{BC}, the present one has several advantages. While in the cited papers we considered the pure Laplacian only, now we deal with a much more general operator involving variable coefficients. We also consider  alternating Dirichlet and Robin boundary conditions instead of Dirichlet and Neumann ones in \cite{BBC1}, \cite{BBC-CR}, \cite{BBC-PMA}, \cite{BC}. Moreover, now the homogenized operator involves Robin boundary condition instead of Dirichlet or Neumann ones. More precisely, we have an additional term in the coefficient in the homogenized Robin condition, and this term appears due to the geometric structure of the perturbation. Such situation was not considered before and it is the most complicated among the possible ones. One more important difference is that in the present paper we consider \emph{non-periodic} alternation of boundary conditions, while in \cite{BBC1}, \cite{BBC-CR}, \cite{BBC-PMA}, \cite{BC} only strictly periodic alternations were studied. Our assumptions for the structure of alternations are quite weak and include quite a wide class of possible non-periodic alternations. We prove the uniform resolvent convergence in such case when the homogenized condition is the Robin one and the same result for the homogenized Dirichlet condition. The estimates for the rate of convergence are given in two possible operator norms, see Theorems~\ref{th1.1},~\ref{th1.4}. We also find that in the case of the homogenized Robin condition the uniform resolvent convergence does not hold in the sense of the norm of the operators acting from $L_2$ to $\H^1$, while it was true in the case of homogenized Dirichlet and Neumann condition in \cite{BBC1}, \cite{BBC-CR}, \cite{BBC-PMA}, \cite{BC}.

To obtain the described results, we use the combination of the techniques employed in \cite{BBC1}, \cite{BC} for periodic case and that from \cite{B03}. Exactly in the last paper the non-periodic alternation in the bounded domain was treated, and we adapt these ideas for our case of the unbounded domain. In \cite{B03} the main result were the asymptotic estimates and the asymptotic expansions for the eigenvalues of the pure Laplacian, while the resolvent convergence was not considered. It should be also said that it is the first time in the problems on boundary homogenization that for non-periodic alternation the uniform resolvent convergence is proven and the estimates for the rates of convergence are provided.

The next part of our main result concerns the description of the asymptotic behavior of the spectrum. Since in the case of  general operator with non-periodic alternation the structure of the spectrum can be very complicated, we restrict ourselves to the case of pure Laplacian with periodic alternation. In this case we can apply Floquet-Bloch decomposition and get the band spectrum. Our main results are two-terms asymptotics for the first band functions and the complete asymptotic expansion (up to an exponentially small term) for the bottom of the spectrum. Similar results were obtained in \cite{BBC1}, \cite{BBC-CR}, \cite{BBC-PMA}, \cite{BC} for the case of homogenized Dirichlet or Neumann condition. In the present paper we prove them in the case of homogenized Robin condition assuming as above that the geometric structure of the alternation generates additional term in the homogenized condition.

In conclusion of this section, let us describe briefly the structure of the paper. In the next section we formulate the problem and the main results. We also discuss open problems, and, in particular, the Bethe-Sommerfeld conjecture for our model, as well as possible ways of treating them. In the third section we prove the uniform resolvent convergence in the case of the homogenized Robin condition and establish the estimates for the rates of convergence. Similar results but for the homogenized Dirichlet condition are proven in the fourth section. In the fifth section we study the band functions in the periodic case and we prove two-terms asymptotics for them. In the last sixth section we construct the complete asymptotic expansion for the bottom of the essential spectrum in the case of periodic alternation and the homogenized Robin condition.

\section{Formulation of the problem and the main result}\label{Sec.main}

%
Let $x=(x_1,x_2)$ be the Cartesian coordinates in $\mathds{R}^2$,  $\Om:=\{x: 0<x_2<\pi\}$ be an infinite strip of the width $\pi$, $\e$ be a small positive parameter.
By $\G_+$ and $\G_-$ we denote respectively the upper and lower boundary of $\Om$. On the lower boundary we introduce an infinite set of the points $(s_j^\e,0)$, $j\in\mathds{Z}$, $s_0^\e=0$, $s_j^\e<s_{j+1}^\e$ such that the distance between each two neighboring points is of order $\Odr(1)$. By $a_j^+(\e)$, $a_j^-(\e)$, $j\in\mathds{Z}$, we denote two sets of functions such that
\begin{equation}\label{1.23}
s_{j-1}^\e+a_{j-1}^+(\e)<s_j^\e-a_j^-(\e),\quad j\in \mathds{Z}.
\end{equation}
Employing the introduced points and functions, we partition the lower boundary $\G_-$ as  follows,
\begin{equation*}
\g_\e:=\bigcup\limits_{j=-\infty}^{+\infty}\{x: \e s_j^\e-\e a_j^-(\e)<x_1<\e s_j^\e+\e a_j^+(\e),
\, x_2=0,\, j\in \mathds{Z}\},
\quad \G_\e:=\G_-\setminus\overline{\g_\e}.
\end{equation*}

In this paper we study a magnetic Schr\"odinger operator in $\Om$ subject to the Dirichlet boundary condition on $\G_+\cup\g_\e$ and to the Robin condition on $\G_\e$. We define the differential expression corresponding to the operator as
\begin{equation}\label{1.2}
\mathcal{H}_\e:=(\iu\nabla +A)^2+V\quad\text{in}\quad L_2(\Om),
\end{equation}
where $A=A(x)=(A_1(x),A_2(x))$ is a magnetic potential, and $V=V(x)$ is an electric potential. We assume that $A_i\in\Hinf^1(\Om)$, $i=1,2$, $V\in L_\infty(\Om)$ and these functions are real-valued. The boundary conditions for the operator $\mathcal{H}_\e$ are introduced as
\begin{equation*}
u=0\quad\text{on}\quad\G_+\cup\g_\e,\qquad \left(-\frac{\p}{\p x_2}-\iu A_2+b\right)u=0\quad\text{on}\quad\G_\e,
\end{equation*}
where $A_2$ is supposed to be $A_2(x_1,0)$, and $b\in\Hinf^1(\mathds{R})$ is a real-valued function. Rigorously we introduce  $\mathcal{H}_\e$ as the self-adjoint operator in $L_2(\Om)$ associated with the symmetric lower-semibounded closed sesquilinear form
\begin{equation}\label{1.4}
\mathfrak{h}_\e[u,v]:=\big((\iu\nabla+A)u,(\iu\nabla+A) v\big)_{L_2(\Om)} +(Vu,v)_{L_2(\Om)}+
(bu,v)_{L_2(\Gamma_{\e})}\quad
\text{on}\quad \Ho^1(\Om,\G_+\cup\g_\e).
\end{equation}
Hereinafter we denote by $\Ho^1(Q,S)$  the set of the functions in $\H^1(Q)$ having the zero trace on the manifold $S$ lying in the closure of the domain $Q$. Our main aim is to study the asymptotic behavior  of the resolvent and the spectrum of $\mathcal{H}_\e$ as $\e\to+0$.

Before presenting the main results, we make certain assumptions on the structure of the alternation of the boundary conditions in the operator $\mathcal{H}_\e$. The first assumption describes the distribution of the parts of 
$\g_\e$.

\begin{enumerate}\def\theenumi{(A\arabic{enumi})}
\item\label{asA1} There exists a function $\vt_\e=\vt_\e(s)\in C^3(\mathds{R})$ such that
    \begin{align}
    &\vt_\e(\e s_j^\e)=\e\pi j, \label{1.19}
    \\
    & c_1^{-1}\leqslant \vt_\e'(s)\leqslant c_1,\hphantom{\leqslant c_1}\quad s\in \mathds{R}\label{1.20}
    \\
    &|\vt_\e''(s)|+|\vt_\e'''(s)|\leqslant c_1,\quad s\in\mathds{R},\label{1.21}
    \end{align}
    where $c_1$ is a positive constant independent of $s$ and $\e$.
\end{enumerate}

The second assumption concerns the lengths of  the parts of 
$\g_\e$.

\begin{enumerate}\def\theenumi{(A\arabic{enumi})} \setcounter{enumi}{1}
\item\label{asA2} There exists a strictly positive function $\eta=\eta(\e)$ and a positive constant $c_2\leqslant 1$ independent of $\e$ and $j$ such that
    \begin{equation}
    2c_2\eta(\e)\leqslant a_j^-(\e)+a_j^+(\e)\leqslant  2\eta(\e).\label{1.22}
    \end{equation}
\end{enumerate}

Our first main result concerns the resolvent convergence of $\mathcal{H}_\e$. We consider two main cases relating to various possible homogenized operators. In the first case we assume that
\begin{equation}\label{1.5}
\frac{1}{\e\ln\eta(\e)}\to -K,\quad\e\to+0,\quad K=\mathrm{const}\geqslant 0.
\end{equation}
We shall show that in this case the homogenized boundary condition on $\G_-$ is the Robin condition with certain coefficients. In order to formulate precisely this result we first introduce additional notations.

Given a constant $\mu\geqslant 0$, by $\mathcal{H}_{\mathrm{R}}^{(\mu)}$ we indicate the self-adjoint operator associated with the symmetric lower-semibounded closed sesquilinear form
\begin{align*}
\mathfrak{h}_{\mathrm{R}}^{(\mu)}[u,v]:=&\big((\iu\nabla+A)u,(\iu\nabla+A) v\big)_{L_2(\Om)} +(Vu,v)_{L_2(\Om)}
\\
&+(bu,v)_{L_2(\Gamma_-)}+((K+\mu)\vt_\e' u,v)_{L_2(\Gamma_-)}\quad
\text{on}\quad \Ho^1(\Om,\G_+).
\end{align*}
In the same way as in \cite[Sec. 3]{IEOP} one can check that the domain of $\mathcal{H}_{\mathrm{R}}^{(\mu)}$ consists of the functions in $\H^2(\Om)$ satisfying the boundary conditions
\begin{equation}\label{1.7}
u=0\quad\text{on}\quad \G_+,\qquad \left(-\frac{\p}{\p x_2}-\iu A_2+b+(K+\mu)\vt_\e'\right)u=0\quad\text{on}\quad \G_-,
\end{equation}
while the action of the operator $\mathcal{H}_{\mathrm{R}}^{(\mu)}$ is described by the same differential expression as in (\ref{1.2}).

By $\|\cdot\|_{Z_1\to Z_2}$ we indicate the norm of an operator acting from a Banach space $Z_1$ to a Banach space $Z_2$.

Now we are ready to formulate our first main result.

\begin{theorem}\label{th1.1}
Suppose \ref{asA1}, \ref{asA2}, and (\ref{1.5}). Then the inequalities
\begin{align}
&\|(\mathcal{H}_\e+\iu)^{-1}-(\mathcal{H}_{\mathrm{R}}^{(\mu)}+\iu)^{-1}\|_{L_2(\Om)\to L_2(\Om)}\leqslant C \e(K+\mu)|\ln(K+\mu)\e|, \label{1.8}
\\
&\|(\mathcal{H}_\e+\iu)^{-1}-(\mathcal{H}_{\mathrm{R}}^{(0)}+\iu)^{-1}\|_{L_2(\Om)\to L_2(\Om)}\leqslant C (\e(K+\mu)|\ln(K+\mu)\e|+\mu), \label{1.9}
\\
&\|(\mathcal{H}_\e+\iu)^{-1}-(\mathcal{H}_{\mathrm{R}}^{(0)}+\iu)^{-1}\|_{L_2(\Om)\to \H^1(\Om)}\leqslant C (K+\mu)^{1/2}, \label{1.13}
\end{align}
hold true, where the constants $C$ are independent of $\e$, and $\mu=\mu(\e)$ is given by the formula
\begin{equation}\label{1.11}
\mu=\mu(\e):=-\frac{1}{\e\ln\eta(\e)}-K,\quad \mu(\e)\xrightarrow[\e\to+0]{}+0.
\end{equation}
There exists a boundary corrector $W=W(x,\e,\mu)$ defined explicitly in (\ref{3.47}) such that the inequality
\begin{equation}\label{1.12}
\|(\mathcal{H}_\e+\iu)^{-1}- (1+W)(\mathcal{H}_{\mathrm{R}}^{(\mu)}+\iu)^{-1}\|_{L_2(\Om)\to \H^1(\Om)} \leqslant C \e(K+\mu)|\ln(K+\mu)\e|,
\end{equation}
holds true, where the constant $C$ is independent of $\e$.
\end{theorem}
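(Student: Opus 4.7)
The plan is to compare $u_\e := (\mathcal{H}_\e + \iu)^{-1} f$ with $u_0 := (\mathcal{H}_{\mathrm{R}}^{(\mu)} + \iu)^{-1} f$ via a matched-asymptotics ansatz $v_\e := (1+W) u_0$. The homogenized solution $u_0$ obeys the Robin condition (\ref{1.7}) on all of $\G_-$ and in particular does \emph{not} vanish on the Dirichlet patches $\g_\e$, so it is not admissible for $\hpe$. I would construct $W$ from a scaled half-plane boundary-layer profile $\Psi(\xi)$ that is harmonic in $\{\xi_2>0\}$, equals one on the segment corresponding to $\g_\e$, and is matched to a Robin-type profile outside whose logarithmic behaviour at infinity is tuned to the scale $1/|\log\eta(\e)|$. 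Gluing such local profiles with cutoffs centered at each $\e s_j^\e$ gives the explicit formula (\ref{3.47}) and makes $v_\e$ vanish on $\g_\e$ to the required precision, so that $v_\e \in \Ho^1(\Om,\G_+\cup\g_\e)$ and is a legitimate element of the form domain of $\hpe$.

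I would then insert $v_\e$ into the form and read off the residual $r_\e := (\mathcal{H}_\e + \iu) v_\e - f$ as an element of $(\Ho^1)^\ast$. It splits into two pieces. The bulk piece is the commutator $[\mathcal{H}_\e,W] u_0$ supported in an $\e$-thick strip near $\G_-$; since $\Psi$ has its gradient concentrated in a logarithmically thick annulus, the $\H^1$-energy of each local profile is $\Odr(1/|\log\eta|)$, and summing over the $\Odr(1/\e)$ segments and using the a priori bound $\|u_0\|_{\H^2(\Om)} \leqslant C\|f\|_{L_2(\Om)}$ yields $\|r_\e^{\mathrm{bulk}}\|_{(\Ho^1)^\ast}\leqslant C\e(K+\mu)|\ln(K+\mu)\e|$. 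The boundary piece on $\G_\e$ is the delicate one: averaging the normal flux of $W u_0$ along $\G_\e$, one finds that its leading contribution equals $-\bigl(1/(\e\log\eta(\e))\bigr)\vt_\e' u_0 = (K+\mu)\vt_\e' u_0$ by (\ref{1.11}), which is \emph{exactly} the extra Robin term built into $\mathcal{H}_{\mathrm{R}}^{(\mu)}$. The leading Robin mismatches therefore cancel, and only a subleading correction of the same logarithmic order survives.

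All four bounds follow from here. Coercivity of $\hpe + \iu\|\cdot\|_{L_2}^2$ gives $\|u_\e-v_\e\|_{\H^1(\Om)}\leqslant C\|r_\e\|_{(\Ho^1)^\ast}$, which is precisely (\ref{1.12}). A direct estimate of the boundary layer (thickness $\Odr(\e)$ and $L_\infty$-bound $\Odr(1/|\log\eta|)$ away from $\g_\e$) yields $\|Wu_0\|_{L_2(\Om)}\leqslant C\e(K+\mu)|\ln(K+\mu)\e|\|f\|_{L_2(\Om)}$, so (\ref{1.8}) follows from (\ref{1.12}) by the triangle inequality. For (\ref{1.9}) I would combine (\ref{1.8}) with the second resolvent identity applied to $\mathcal{H}_{\mathrm{R}}^{(\mu)}$ and $\mathcal{H}_{\mathrm{R}}^{(0)}$: the corresponding forms differ only by the bounded boundary perturbation $\mu\vt_\e'$ on $\G_-$, hence $\|(\mathcal{H}_{\mathrm{R}}^{(\mu)}+\iu)^{-1}-(\mathcal{H}_{\mathrm{R}}^{(0)}+\iu)^{-1}\|_{L_2\to L_2}\leqslant C\mu$. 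Finally, (\ref{1.13}) reflects the genuine cost of using no corrector: $\|Wu_0\|_{\H^1(\Om)}$ is of order $(K+\mu)^{1/2}$ because the $\H^1$-energy of one rescaled profile is $\sim 1/|\log\eta|\sim\e(K+\mu)$ and there are $\Odr(1/\e)$ of them, and this dominates the $\Odr(\mu)$ contribution from switching $\mu\to 0$. The main obstacle I anticipate is handling the non-periodicity: the boundary-layer profiles must be glued across segments with varying lengths $a_j^\pm(\e)$ and positions $s_j^\e$, and both the Robin-mismatch cancellation and the commutator estimate must be uniform in $j$; this forces systematic use of the regularising diffeomorphism $\vt_\e$ from \ref{asA1} together with the uniform bilateral bound on $a_j^-(\e)+a_j^+(\e)$ from \ref{asA2}, in the spirit of \cite{B03}.
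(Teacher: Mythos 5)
Your proposal follows essentially the same route as the paper: insert $v_\e := u_\e - (1+W)u^{(\mu)}$ into the sesquilinear identity (Lemma~\ref{lm3.1}), build $W$ by matching a half-plane boundary-layer profile to per-segment inner profiles $Y^{(j)}$ with overlapping cutoffs, estimate the bulk and boundary residual terms via the $\H^2$-bound on $u^{(\mu)}$ and the logarithmic energy scaling $\|W\|_{\H^1(\Om_{\e,j})}^2\sim\e(K+\mu)$, then deduce (\ref{1.8}), (\ref{1.9}), (\ref{1.13}) from (\ref{1.12}) by the triangle inequality together with $\|(\mathcal{H}_{\mathrm{R}}^{(\mu)}+\iu)^{-1}-(\mathcal{H}_{\mathrm{R}}^{(0)}+\iu)^{-1}\|\leqslant C\mu$. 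The only substantive step you leave implicit is exactly the one that is new in the non-periodic setting: the constants $\ln d_j(\e)$ produced by the varying lengths $a_j^\pm(\e)$ must be absorbed by introducing the smooth modulation $1+\e\vp_\e(x_1)$ of the boundary layer (see (\ref{3.49})--(\ref{3.50})), and one must verify that this modulation contributes only controllable extra terms to $\D W$ (cf. Lemma~\ref{lm3.4}) — you flag this issue but should spell out how it is resolved.
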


Let us discuss the results of this theorem. First of all we note that the operator $\mathcal{H}_\mathrm{R}^{(\mu)}$ depends on $\e$ not only via $\mu$, but also due to the presence of the boundary term $(K+\mu)\vt_\e'$ in (\ref{1.7}). However, we can easily get rid of such dependence, if we additionally assume that
\begin{equation}\label{1.122}
\text{there exists a bounded function}\  \vt_*:\mathds{R}\to \mathds{R}
\ \text{such that }\sup\limits_{\mathds{R}}|\vt_\e'-\vt_*|\to0.
\end{equation}
Then we can replace the mentioned term by $(K+\mu)\vt_*$, and it will also generate additional term $C(K+\mu)\sup\limits_{\mathds{R}}|\vt_\e'-\vt_*|$  in the right hand sides of the estimates in Theorem~\ref{th1.1}.

We track the dependence of 
$K$ in the right hand sides in the estimates in Theorem~\ref{th1.1} just to involve also the case $K=0$, where, as we see, the rates of the convergence are better. We observe that the estimate (\ref{1.13}) is of interest only for $K=0$, while otherwise it states no convergence. Moreover, as $K\neq0$, there is no even strong resolvent convergence of $\mathcal{H}_\e$ to $\mathcal{H}_\mathrm{R}^{(0)}$ in the sense of the norm $\|\cdot\|_{L_2(\Om)\to\H^1(\Om)}$. Indeed, assume for simplicity that $\vt_\e$ is independent of $\e$ and for a given $f\in L_2(\Om)$ let $u_\e:=(\mathcal{H}_\e+\iu)^{-1}f$, $u_0:=(\mathcal{H}^{0}_\mathrm{R}+\iu)^{-1}f$. Then by the definition we have
\begin{align}
&\|(\iu\nabla+A)u_\e\|_{L_2(\Om)}^2+(Vu_\e,u_\e)_{L_2(\Om)}+(b u_\e,u_\e)_{L_2(\G_\e)}-\iu\|u_\e\|_{L_2(\Om)}^2=(f,u_\e)_{L_2(\Om)},
\nonumber
\\
&
\begin{aligned}
\|(\iu\nabla+A)u_0\|_{L_2(\Om)}^2+(Vu_0,u_0)_{L_2(\Om)}&+\big((b+K\vt') u_0,u_0\big)_{L_2(\G_-)}
\\
&-\iu\|u_0\|_{L_2(\Om)}^2=(f,u_0)_{L_2(\Om)}.
\end{aligned}
\label{1.366}
\end{align}
If we suppose the strong resolvent convergence at least for a sequence $\e\to+0$, we can pass to the limit in the former identity and we get
\begin{equation*}
\|(\iu\nabla+A)u_0\|_{L_2(\Om)}^2+(Vu_0,u_0)_{L_2(\Om)}+(b u_0,u_0)_{L_2(\G_\e)}-\iu\|u_0\|_{L_2(\Om)}^2=(f,u_0)_{L_2(\Om)}.
\end{equation*}
This contradicts to (\ref{1.366}). Nevertheless, due to (\ref{1.9}) the uniform resolvent convergence holds in the sense of the norm $\|\cdot\|_{L_2(\Om)\to L_2(\Om)}$ no matter whether $K$ vanishes or not.

We can see also from Theorem~\ref{th1.1} that the estimates for the rates of convergence depend highly on the operator norm and on the approximating operators. The worst estimate is in (\ref{1.13}). The first way to improve the convergence is to replace the norm by a weaker one as it was done in (\ref{1.8}) and (\ref{1.9}). The former estimate is better than the latter, but the price to pay is the dependence of the approximating operator $\mathcal{H}_\mathrm{R}^{(\mu)}$ on $\mu$. At the same time, this dependence is quite simple and it is 
only in the boundary condition (\ref{1.7}). If we keep the operator norm $\|\cdot\|_{L_2(\Om)\to\H^1(\Om)}$, we have to employ a special boundary corrector $W$ in order to get the reasonable estimate, see (\ref{1.12}). Here the rate of the convergence is the best one among given, but the approximating operator is the most complicated. We also note that these effects for pure Laplacian and strictly periodic alternation were already found in \cite{BBC1} in the 
particular case $K=b=0$.

In the second case we have $K=\infty$, or, more precisely,
\begin{equation}\label{1.24}
\e\ln\eta(\e)\to-0,\quad\e\to+0,
\end{equation}
and then the homogenized boundary condition on $\G_-$ is the Dirichlet one. In this case the homogenized operator is denoted as $\mathcal{H}_{\mathrm{D}}$ and is introduced as the self-adjoint one associated with the symmetric lower-semibounded closed sesquilinear form
\begin{align*}
\mathfrak{h}_{\mathrm{D}}[u,v]:=&\big((\iu\nabla+A)u,(\iu\nabla+A) v\big)_{L_2(\Om)} +(Vu,v)_{L_2(\Om)}
\quad
\text{on}\quad \Ho^1(\Om,\G_-).
\end{align*}
Again by analogy with  \cite[Sec. 3]{IEOP} one can check that the domain of $\mathcal{H}_{\mathrm{D}}$ consists of the functions in $\H^2(\Om)$ vanishing on $\p\Om$, while the action is given by the differential expression in (\ref{1.7}).

In this case we replace the assumption \ref{asA2} by a stronger one.
\begin{enumerate}\def\theenumi{(A\arabic{enumi})} \setcounter{enumi}{2}
\item\label{asA3} There exists a strictly positive function $\eta=\eta(\e)$ and a positive constant $c_3$ independent of $\e$ and $j$ such that
    \begin{equation*}
    \eta(\e)\leqslant a_j^\pm(\e)\leqslant  c_3\eta(\e)\leqslant \frac{\pi}{4}.
    \end{equation*}
\end{enumerate}

The result on the uniform resolvent convergence is formulated in
\begin{theorem}\label{th1.4}
Suppose \ref{asA1}, \ref{asA3}, and (\ref{1.24}). Then the inequality
\begin{equation*}
\|(\mathcal{H}_\e+\iu)^{-1}-(\mathcal{H}_{\mathrm{D}}+\iu)^{-1}\|_{L_2(\Om)\to \H^1(\Om)}\leqslant C\e^{1/4} \big(|\ln\sin\eta(\e)|+\cos\eta(\e)\big)^{1/4}, 
\end{equation*}
holds true, where the constant  $C$ is  independent of $\e$
\end{theorem}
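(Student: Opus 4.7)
The plan is a direct weak-form comparison of the two resolvents. Given $f\in L_2(\Om)$, set $u_\e:=(\mathcal{H}_\e+\iu)^{-1}f$ and $u_0:=(\mathcal{H}_{\mathrm{D}}+\iu)^{-1}f$, for which the usual a priori bounds $\|u_\e\|_{\H^1(\Om)}+\|u_0\|_{\H^2(\Om)}\leqslant C\|f\|_{L_2(\Om)}$ are available from form coercivity and elliptic regularity. The crucial observation is that $u_0$ vanishes on $\G_-\supset\g_\e$, so $v:=u_\e-u_0$ lies in $\Ho^1(\Om,\G_+\cup\g_\e)$ and is an admissible test function for $\mathfrak{h}_\e$. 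Testing the weak equation for $u_\e$ against $v$ and subtracting the weak equation for $u_0$ (with an integration by parts to absorb the non-vanishing trace of $v$ on $\G_\e$), the boundary contributions involving $bu_0$ and $A_2u_0$ drop out because $u_0|_{\G_-}=0$, the $(bu_\e,v)_{L_2(\G_\e)}$ contributions cancel via $v|_{\G_\e}=u_\e|_{\G_\e}$, and only a single boundary term survives:
\begin{equation*}
\mathfrak{h}_\e[v,v]+\iu(v,v)_{L_2(\Om)}=\int_{\G_\e}\frac{\p u_0}{\p x_2}\,\bar v\,ds.
\end{equation*}

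The technical crux is a boundary trace inequality capturing the effect of the dense Dirichlet set: under \ref{asA1} and \ref{asA3}, every $u\in\H^1(\Om)$ with $u|_{\g_\e}=0$ should satisfy
\begin{equation*}
\|u\|_{L_2(\G_-)}^{2}\leqslant C\e\bigl(|\ln\sin\eta(\e)|+\cos\eta(\e)\bigr)\|u\|_{\H^1(\Om)}^{2}.
\end{equation*}
I would prove this by applying the diffeomorphism associated with $\vt_\e$ from \ref{asA1} to straighten the distribution of the points $\e s_j^\e$ into a strictly periodic array of period $\e\pi$, then localizing to a single cell and rescaling to unit size. The rescaled problem reduces to estimating the $L_2$ trace on the free middle portion of the bottom edge of a unit cell, for functions vanishing on two symmetric Dirichlet arcs of length comparable to $\eta$ at the bottom corners; the geometric factor $|\ln\sin\eta|+\cos\eta$ arises from a conformal mapping of Joukowski type (essentially $z\mapsto\cos z$) which sends the slit cell onto a half-plane and reduces the bound to a classical capacity-type estimate.

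Combining the trace lemma applied to $u_\e$ with the a priori bound yields $\|u_\e\|_{L_2(\G_\e)}\leqslant C\e^{1/2}(|\ln\sin\eta(\e)|+\cos\eta(\e))^{1/2}\|f\|_{L_2(\Om)}$, while the standard $\H^2\to\H^{1/2}(\G_-)$ trace map controls $\|\p_{x_2}u_0\|_{L_2(\G_-)}\leqslant C\|f\|_{L_2(\Om)}$. Cauchy--Schwarz on the right-hand side of the displayed identity then produces a bound of order $\e^{1/2}(|\ln\sin\eta(\e)|+\cos\eta(\e))^{1/2}\|f\|_{L_2(\Om)}^{2}$. Taking imaginary and real parts of the identity separately, and using the uniform coercivity of $\mathfrak{h}_\e$ to absorb the $V$- and $b$-contributions, one obtains the same bound for both $\|v\|_{L_2(\Om)}^{2}$ and $\|(\iu\nabla+A)v\|_{L_2(\Om)}^{2}$; extracting square roots delivers the claimed rate $\e^{1/4}(|\ln\sin\eta(\e)|+\cos\eta(\e))^{1/4}$ in the $\H^1$ norm. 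The principal obstacle is the capacity computation behind the trace lemma, and in particular ensuring uniformity in $\e$ across the slowly varying cell geometries permitted by the non-periodic distribution of the points $s_j^\e$.
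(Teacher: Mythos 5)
Your outer skeleton is the same as the paper's: set $v_\e=u_\e-u_0$, observe $u_0|_{\G_-}=0$ puts $v_\e$ in $\Ho^1(\Om,\G_+\cup\g_\e)$, subtract the two weak formulations to get an identity whose only surviving boundary term is $\int_{\G_\e}\tfrac{\p u_0}{\p x_2}\bar v_\e$, bound it by $\|u_0\|_{\H^2}\,\|u_\e\|_{L_2(\G_\e)}$, and reduce everything to smallness of $\|u_\e\|_{L_2(\G_\e)}$. That matches the paper's equations (5.1)--(5.2) exactly, and the final $\e^{1/4}(\dots)^{1/4}$ arises the same way from squaring.

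Where you genuinely diverge is at the crux, and there the proposal has a gap. You postulate a standalone trace inequality
$\|u\|_{L_2(\G_-)}^2\leqslant C\e(|\ln\sin\eta|+\cos\eta)\|u\|_{\H^1(\Om)}^2$
valid for \emph{every} $u\in\H^1(\Om)$ vanishing on $\g_\e$, to be proved by straightening via $\vt_\e$, rescaling to a unit cell, and a Joukowski-type conformal map plus capacity estimate. The paper does not prove any such general inequality. Instead it estimates $\|u_\e\|_{L_2(\G_\e)}$ by using the equation satisfied by $u_\e$: it inserts the special test function $\phi=u_\e X_{\eta_*}(\cdot\,\e^{-1})$ (with $X_\eta$ being the Gadylshin function $\RE\ln(\sin\rho+\sqrt{\sin^2\rho-\sin^2\eta})-\xi_2$) into the integral identity for $u_\e$, integrates by parts, and reads the boundary term $\frac{1}{2\e}\int_{\G_\e}\vt_\e'|u_\e|^2$ off from the Neumann data of $X_{\eta_*}$; all other terms are then bounded by $\|X_{\eta_*}\|_\infty\sim|\ln\sin\eta_*|$ and by $\sup|\xi_2^k\p^k X_{\eta_*}/\p\xi_2^k|\sim\cos\eta_*$, which is the sole content of Lemma 5.1. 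In other words, the factor $|\ln\sin\eta|+\cos\eta$ is not extracted from a capacity computation but from the $L^\infty$- and second-derivative bounds on $X_{\eta_*}$, and the identity $(f,u_\e X_{\eta_*})$ is what replaces the $L_2$ term of a Poincar\'e-type inequality. Your route, even if ultimately viable, would have to reproduce precisely this two-part bound; a ``classical capacity estimate'' naturally yields $\e(1+|\ln\eta|)$ (the logarithmic-capacity factor plus an $O(1)$ piece) but the refined $\cos\eta$ factor, which is what makes the statement meaningful as $\eta\to\pi/2-0$, is tied to the fine structure of $X_{\eta_*}$ near the slit tips and is not a consequence of a generic Joukowski/half-plane reduction. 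There is also a regularity issue you would need to address: in the paper's multiplier argument the product $u_\e X_{\eta_*}$ is in $\H^1$ because $u_\e$ has the improved regularity of a PDE solution near the endpoints of $\g_\e$; a generic $\H^1$ function vanishing on $\g_\e$ need not make $u\nabla X_{\eta_*}$ square-integrable, so any conformal-map proof must be organised to avoid multiplying by such a singular weight.

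In short, the surrounding scaffolding is correct and identical to the paper's; the key trace estimate is where all the work is, and your proposal merely flags it as an obstacle while sketching a route that differs from, and is likely weaker than, the paper's multiplier argument built around the explicit function $X_{\eta_*}$ and Lemma 5.1.
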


With respect to Theorem~\ref{th1.1}, in Theorem \ref{th1.4} the estimate for the rate of convergence is given in the best possible norm and no corrector is involved. Here the rate of convergence can be improved only by assuming $\eta(\e)\to\pi/2-0$, since in this case the function $|\ln\sin\eta(\e)|+\cos\eta(\e)$ tends to zero. This is quite natural, since the assumption $\eta=\pi/2$ means $\G_\e=\emptyset$ and $\g_\e=\G_-$. Then there is no alternation of boundary conditions in this case and $\mathcal{H}_\e=\mathcal{H}_\mathrm{D}$.

The proven theorems and \cite[Ch.
VIII, Sec. 7, Ths. VIII.23, VIII.24]{RS1} imply the convergence for the spectrum and the spectral projectors of $\mathcal{H}_\e$. By $\spec(\cdot)$ we denote the spectrum of an operator.

\begin{theorem}\label{th1.2}
The spectrum of $\mathcal{H}_\e$ converges to that of $\mathcal{H}_0$, where
\begin{equation*}
\begin{aligned}
&\mathcal{H}_0:=\mathcal{H}_{\mathrm{R}}^{(0)}, \text{if we assume } \ref{asA1}, \ref{asA2}, (\ref{1.5}), (\ref{1.122}), \text{ and }(\ref{1.7}) \text{ holds for } \vt_* \text{ in place of } \vt'_\e,\\
&or \\
&\mathcal{H}_0:=\mathcal{H}_{\mathrm{D}}, \text{ if we assume }\ref{asA1}, \ref{asA3}, \text{ and }(\ref{1.24}).
\end{aligned}
\end{equation*}
Namely, if $\l\not\in\spec(\mathcal{H}_0)$, then
$\l\not\in\spec(\mathcal{H}_\e)$ for $\e$ small enough. If
$\l\in\spec(\mathcal{H}_0)$, then there exists
$\l_\e\in\spec(\mathcal{H}_\e)$ such that $\l_\e\to\l$ as $\e\to+0$.
The convergence of the spectral projectors associated with
$\mathcal{H}_\e$ and $\mathcal{H}_0$
\begin{equation*}
\|\mathcal{P}_{(a,b)}(\mathcal{H}_\e)-
\mathcal{P}_{(a,b)}(\mathcal{H}_0)\|_{L_2(\Om)\to L_2(\Om)}\to0,\quad \e\to0,
\end{equation*}
is valid for $a,b\not\in\spec(\mathcal{H}_0)$, $a<b$.
\end{theorem}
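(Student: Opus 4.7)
The plan is to reduce the statement to the abstract theorems VIII.23 and VIII.24 of Reed--Simon, which deduce convergence of spectra and of spectral projectors from norm resolvent convergence. The main work therefore consists in upgrading Theorems~\ref{th1.1} and~\ref{th1.4} to a genuine norm resolvent convergence of $\mathcal{H}_\e$ toward an $\e$-\emph{independent} limit operator $\mathcal{H}_0$.

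For the Dirichlet alternative this is immediate: the operator $\mathcal{H}_{\mathrm{D}}$ in Theorem~\ref{th1.4} does not depend on $\e$, the $L_2(\Om)\to L_2(\Om)$ norm is dominated by the stronger $L_2(\Om)\to\H^1(\Om)$ norm appearing there, and the right hand side of the estimate tends to zero because of (\ref{1.24}). Hence norm resolvent convergence $(\mathcal{H}_\e+\iu)^{-1}\to(\mathcal{H}_{\mathrm{D}}+\iu)^{-1}$ follows at once.

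For the Robin alternative the operator $\mathcal{H}_{\mathrm{R}}^{(0)}$ still depends on $\e$ through the coefficient $K\vt_\e'$ appearing in (\ref{1.7}). Under the additional assumption (\ref{1.122}) I introduce the $\e$-independent operator $\mathcal{H}_0$ obtained by replacing $\vt_\e'$ with $\vt_*$. The difference of the associated sesquilinear forms is $K\big((\vt_\e'-\vt_*)u,v\big)_{L_2(\G_-)}$, and combining the trace inequality $\|u\|_{L_2(\G_-)}\leqslant C\|u\|_{\H^1(\Om)}$ with the second resolvent identity yields
\[
\big\|(\mathcal{H}_{\mathrm{R}}^{(0)}+\iu)^{-1}-(\mathcal{H}_0+\iu)^{-1}\big\|_{L_2(\Om)\to L_2(\Om)}\leqslant CK\sup_{\mathds{R}}|\vt_\e'-\vt_*|\xrightarrow[\e\to+0]{}0.
\]
Adding this to estimate (\ref{1.9}) of Theorem~\ref{th1.1} and invoking $\mu(\e)\to 0$ from (\ref{1.11}), I obtain the desired norm resolvent convergence $(\mathcal{H}_\e+\iu)^{-1}\to(\mathcal{H}_0+\iu)^{-1}$ in $L_2(\Om)\to L_2(\Om)$.

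With norm resolvent convergence in hand for both alternatives, the conclusion is routine. Theorem~VIII.23 of Reed--Simon directly yields both spectral statements: every $\l\notin\spec(\mathcal{H}_0)$ belongs to the resolvent set of $\mathcal{H}_\e$ for $\e$ small enough, and every $\l\in\spec(\mathcal{H}_0)$ is the limit of some sequence $\l_\e\in\spec(\mathcal{H}_\e)$. For the spectral projectors, I apply Theorem~VIII.24 to a continuous bounded function $f_{a,b}$ that equals $1$ on $(a,b)$ and vanishes outside a slightly larger interval whose endpoints lie in the resolvent set of $\mathcal{H}_0$; then $f_{a,b}(\mathcal{H}_0)=\mathcal{P}_{(a,b)}(\mathcal{H}_0)$, and the spectral statement just proved ensures that $f_{a,b}(\mathcal{H}_\e)=\mathcal{P}_{(a,b)}(\mathcal{H}_\e)$ for all sufficiently small $\e$, giving the required norm convergence. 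The only genuinely non-routine point is the form-perturbation argument for the Robin boundary coefficient; this is precisely why the additional hypothesis (\ref{1.122}) has to appear in the Robin alternative of the statement, and it is the step where I expect the main work to lie.
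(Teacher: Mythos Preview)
Your proposal is correct and follows essentially the same approach as the paper: the paper simply states that Theorem~\ref{th1.2} follows from Theorems~\ref{th1.1} and~\ref{th1.4} together with \cite[Ch.~VIII, Sec.~7, Ths.~VIII.23, VIII.24]{RS1}, and the remark after Theorem~\ref{th1.1} already notes that under (\ref{1.122}) one may replace $\vt_\e'$ by $\vt_*$ at the cost of an extra term $C(K+\mu)\sup_{\mathds{R}}|\vt_\e'-\vt_*|$ in the estimates. Your form-perturbation argument and the construction of $f_{a,b}$ merely make explicit the details the paper leaves implicit.
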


The next part of our results concerns the case of the pure Laplacian with the periodic alternation, namely, 
\begin{equation}\label{1.36}
A\equiv 0,\quad V\equiv 0,\quad b=\mathrm{const},\quad \mathcal{H}_\e=-\D, \quad \vt_\e(s) \equiv s, \quad a_j^\pm(\e)=\eta(\e).
\end{equation}
In this case the operator $\mathcal{H}_\e$ is periodic due to the periodicity of the boundary conditions. We apply the Floquet-Bloch expansion to characterize its spectrum. 
We consider the cell of periodicity $\Om_\e:=\{x: |x_1|<\e\pi/2,\ 0<x_2<\pi\}$, and let $\gp_\e:=\p\Om_\e\cap\g_\e$, $\Gp_\e:=\p\Om_\e\cap\G_\e$, $\Gp_\pm:=\p\Om_\e\cap\G_\pm$. By $\Hpe(\tau)$ we denote the self-adjoint operator in $L_2(\Om_\e)$ associated with the lower-semibounded closed symmetric sesquilinear form
\begin{equation*}
\hpe(\tau)[u,v]:=\left( \left(\iu\frac{\p}{\p
x_1}-\frac{\tau}{\e}\right)u,\left(\iu\frac{\p}{\p
x_1}-\frac{\tau}{\e}\right)v\right)_{L_2(\Om_\e)}+ \left(\frac{\p
u}{\p x_2},\frac{\p v}{\p x_2}\right)_{L_2(\Om_\e)}+b(u,v)_{L_2(\Gp_\e)}
\end{equation*}
on $\Hoper^1(\Om_\e,\Gp_+\cup\gp_\e)$, where $\tau\in[-1,1)$ is the (rescaled) quasimomentum. The symbol
$\Hoper^1(\Om_\e,\Gp_+\cup\gp_\e)$ indicates the set of the functions in
$\Ho^1(\Om_\e,\Gp_+\cup\gp_\e)$ satisfying periodic boundary
conditions on the lateral boundaries of $\Om_\e$. The operator
$\Hpe(\tau)$ has a compact resolvent and its spectrum consists of countably many discrete eigenvalues. We denote them as
$\l_n(\tau,\e)$ and arrange in the ascending order counting the multiplicities. By \cite[Lm. 4.1]{BC} we know that
\begin{equation*}
\spec(\mathcal{H}_\e)=\essspec(\mathcal{H}_\e)=\bigcup\limits_{n=1}^\infty
\{\l_n(\tau,\e): \tau\in[-1,1)\},
\end{equation*}
where  $\essspec(\cdot)$ indicates  the essential spectrum of an operator.

Let $\mathfrak{L}_\e$ be the subspace of $L_2(\Om_\e)$
consisting of the functions independent of $x_1$. We decompose the space $L_2(\Om_\e)$ as
\begin{equation*}
L_2(\Om_\e)=\mathfrak{L}_\e\oplus \mathfrak{L}_\e^\bot,
\end{equation*}
where $\mathfrak{L}_\e^\bot$ is the orthogonal complement to
$\mathfrak{L}_\e$ in $L_2(\Om_\e)$. By $\mathcal{Q}_\mu$ we denote the
self-adjoint operator in $\mathfrak{L}_\e$ associated
with the lower-semibounded symmetric closed sesquilinear form
\begin{equation*}
\mathfrak{q}_\mu[u,v]:=\left(\frac{d u}{dx_2},\frac{d
v}{dx_2}\right)_{L_2(0,\pi)}+(b+K+\mu) u(0)\overline{v(0)} \quad
\text{on}\quad \Ho^1((0,\pi),\{\pi\}).
\end{equation*}
An alternative definition of $\mathcal{Q}_\mu$ is as the operator $-\frac{d^2}{dx_2^2}$ in $L_2(0,\pi)$, whose domain consists of the functions in
$\H^2(0,\pi)$ satisfying the boundary conditions
\begin{equation*}
u(\pi)=0,\quad u'(0)-(b+K+\mu)u(0)=0.
\end{equation*}

The uniform resolvent convergence for $\Hpe(\tau)$ and the asymptotic behavior of the band functions $\l_n(\tau,\e)$ read as follows.

\begin{theorem}\label{th1.3}
Suppose \ref{asA1}, \ref{asA2}, (\ref{1.5}), and (\ref{1.36}). Let $|\tau|<1-\k$, where $0<\k<1$ is a fixed constant. Then for sufficiently small $\e$ the estimate
\begin{equation}\label{1.18}
\left\| \left(\Hpe(\tau)-\frac{\tau^2}{\e^2}\right)^{-1} -
\mathcal{Q}_\mu^{-1}\oplus 0\right\|_{L_2(\Om_\e)\to
L_2(\Om_\e)}\leqslant C\e^{1/2}(K+\mu)\big(\k^{-1/2}+|\ln\e(K+\mu)|\big),
\end{equation}
holds true, where $\mu=\mu(\e)$ is defined by (\ref{1.11}), and $C$ is a constant independent of $\e$. Given any $N$, for $\e<2\k^{1/2}N^{-1}$ the eigenvalues $\l_n(\tau,\e)$, $n=1,\ldots,N$, satisfy the asymptotics
\begin{equation}\label{1.15}
\begin{aligned}
&\l_n(\tau,\e)=\frac{\tau^2}{\e^2}+\L_n(\mu)+R_n(\tau,\e,\mu),
\\
&|R_n(\tau,\e,\mu)|\leqslant Cn^4\e^{1/2}(K+\mu)\big(\k^{-1/2}+|\ln\e(K+\mu)|\big),
\end{aligned}
\end{equation}
where $\L_n(\mu)$, $n=1,\ldots,N$, are the first $N$ eigenvalues of $\mathcal{Q}_\mu$, and the constant $C$ is the same as in
(\ref{1.18}). The eigenvalues $\L_n(\mu)$ are holomorphic w.r.t. $\mu$ and solve the equation
\begin{equation}\label{1.17}
\sqrt{\L} \cos\sqrt{\L}\pi+(K+b+\mu)\sin\sqrt{\L}\pi=0.
\end{equation}
\end{theorem}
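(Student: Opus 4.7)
The plan is to combine a Floquet-Bloch fiber analysis with the homogenization technique of Theorem \ref{th1.1}, the crucial device being the orthogonal splitting $L_2(\Om_\e) = \mathfrak{L}_\e \oplus \mathfrak{L}_\e^\perp$ into $x_1$-independent and $x_1$-oscillating components. On $\mathfrak{L}_\e^\perp$ one has the Fourier expansion $u = \sum_{k\neq 0} c_k(x_2) e^{2\iu k x_1/\e}$, compatible with the periodic lateral boundary conditions on $\p\Om_\e$, and the symbol $(2k+\tau)^2/\e^2$ of $(\iu\p_{x_1}-\tau/\e)^2$ satisfies $(2k+\tau)^2 \geqslant (1+\k)^2$ whenever $k\neq 0$ and $|\tau|<1-\k$. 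Subtracting $\tau^2/\e^2 \leqslant (1-\k)^2/\e^2$ then yields the uniform spectral gap $\Hpe(\tau) - \tau^2/\e^2 \geqslant 4\k/\e^2$ (modulo lower-order $O(1)$ corrections) on $\mathfrak{L}_\e^\perp$, so that the resolvent restricted to that subspace has norm $\lesssim \e^2/\k$.

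On the complementary subspace $\mathfrak{L}_\e$ the form $\hpe(\tau)$ reduces to a one-dimensional problem on $(0,\pi)$ with alternating Dirichlet/Robin condition at $x_2=0$, which is precisely the single-period restriction of the setting of Theorem \ref{th1.1}. I would apply the periodic variant of the construction from its proof -- in particular the boundary corrector $W$ of (\ref{3.47}) specialized to $\vt_\e(s)\equiv s$ -- to show that the homogenized form on $\mathfrak{L}_\e$ is exactly $\mathfrak{q}_\mu$, with an error of order $\e(K+\mu)|\ln\e(K+\mu)|$ in $L_2(\Om_\e)\to L_2(\Om_\e)$. The remaining $\e^{1/2}(K+\mu)\k^{-1/2}$ summand in (\ref{1.18}) arises from the cross-coupling between $\mathfrak{L}_\e$ and $\mathfrak{L}_\e^\perp$ produced by the oscillating boundary term: combining the $\e^2/\k$ resolvent bound on $\mathfrak{L}_\e^\perp$ with a boundary-trace estimate of order $(K+\mu)\e^{-3/2}$ for that coupling, via Cauchy-Schwarz and interpolation, yields precisely the missing $\e^{1/2}\k^{-1/2}$ factor.

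The asymptotics (\ref{1.15}) then follow by standard min-max perturbation theory for the inverse operators: once $\e < 2\k^{1/2}N^{-1}$ forces the spectral gap $\sim \k/\e^2$ to dominate the $N$-th eigenvalue $\L_N(\mu)\sim N^2$ of $\mathcal{Q}_\mu$, the first $N$ eigenvalues of $(\Hpe(\tau)-\tau^2/\e^2)^{-1}$ are in one-to-one correspondence with those of $\mathcal{Q}_\mu^{-1}$, and transferring the bound (\ref{1.18}) back to direct eigenvalues multiplies the error by $\L_n(\mu)^2 \sim n^4$. Finally, (\ref{1.17}) is obtained by solving the Sturm-Liouville problem for $\mathcal{Q}_\mu$ explicitly: the ansatz $u(x_2) = \sin\sqrt{\L}(\pi-x_2)$ satisfies $u(\pi)=0$ automatically, and substituting into $u'(0)=(b+K+\mu)u(0)$ yields exactly (\ref{1.17}); holomorphy of $\L_n(\mu)$ in $\mu$ follows from the implicit function theorem applied to this entire characteristic function. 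The principal obstacle will be the careful interpolation extracting $\e^{1/2}\k^{-1/2}$ from the combination of the $\e|\ln\e|$ homogenization bound on $\mathfrak{L}_\e$ and the $\e^2/\k$ spectral-gap bound on $\mathfrak{L}_\e^\perp$ -- neither of which is tight enough on its own.
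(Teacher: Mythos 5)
Your proposal follows essentially the same route as the paper's Section 5: the orthogonal splitting $L_2(\Om_\e)=\mathfrak{L}_\e\oplus\mathfrak{L}_\e^\bot$, the spectral-gap resolvent bound on $\mathfrak{L}_\e^\bot$ (Lemma~\ref{lm4.1}), the periodic specialization of the boundary corrector $W$ from (\ref{3.47}) to build the remainder $V_\e=U_\e-U_\e^{(\mu)}-U_\e^{(\mu)}(0)W\chi$, and then the reduction of (\ref{1.15}) to the resolvent estimate as in \cite[Th.~2.4]{BBC1}. One point to tighten: the piece you flag as ``the principal obstacle'' — extracting the factor $\k^{-1/2}$ — is not an interpolation in the paper; in the identity (\ref{4.8}) the cross-coupling term $\frac{2\iu\tau}{\e}U_\e^{(\mu)}(0)(W\chi,\p_{x_1}V_\e)_{L_2(\Om_\e)}$ is estimated by Cauchy--Schwarz against the coercive quantity $\k\|\p_{x_1}V_\e\|^2$ coming from (\ref{4.3}), and it is the square root inherent in that Young-type absorption of a quadratic form which turns $\k^{-1}\e(K+\mu)^2$ into $\e^{1/2}(K+\mu)\k^{-1/2}$; your direct multiplication of the $\e^2/\k$ resolvent bound by a trace estimate gives $\k^{-1}$ rather than $\k^{-1/2}$. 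Also note that $\hpe(\tau)$ does not block-diagonalize relative to $\mathfrak{L}_\e\oplus\mathfrak{L}_\e^\bot$ (the alternating boundary condition couples them), so one cannot literally restrict the form to $\mathfrak{L}_\e$; the paper instead feeds $F_\e\in\mathfrak{L}_\e$ to the full resolvent and treats the leakage into $\mathfrak{L}_\e^\bot$ as part of $V_\e$, which is what your cross-coupling remark ultimately captures.
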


The proven theorem implies immediately

\begin{corollary}\label{th1.7}
Suppose \ref{asA1}, \ref{asA2}, (\ref{1.5}), and (\ref{1.36}). Then the length of the first band in the essential spectrum of $\mathcal{H}_\e$ is at least of order $\Odr(\e^{-2})$.
\end{corollary}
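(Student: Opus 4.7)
The plan is to read off the corollary directly from the two-term asymptotics \eqref{1.15} proved in Theorem~\ref{th1.3}. Since $\lambda_1(\tau,\e)$ is continuous in the quasimomentum $\tau\in[-1,1)$ (the first eigenvalue of a family of operators with compact resolvents depending analytically on $\tau$), the first band $B_1(\e):=\{\lambda_1(\tau,\e):\tau\in[-1,1)\}$ is a connected subset of $\mathds{R}$, hence an interval. Therefore its length is bounded below by $|\lambda_1(\tau_1,\e)-\lambda_1(\tau_0,\e)|$ for any admissible pair $\tau_0,\tau_1$.

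I would fix once and for all a constant $\varkappa\in(0,1)$, say $\varkappa=1/2$, and choose $\tau_0:=0$ and $\tau_1:=1-\varkappa=1/2$, both of which satisfy $|\tau|<1-\varkappa$ (after shrinking $\tau_1$ a hair if one wants strict inequality, e.g.\ $\tau_1=1/2-\e$). Applying \eqref{1.15} with $n=1$ at these two values of $\tau$, I would subtract and obtain
\begin{equation*}
\lambda_1(\tau_1,\e)-\lambda_1(\tau_0,\e)=\frac{\tau_1^2}{\e^2}+R_1(\tau_1,\e,\mu)-R_1(\tau_0,\e,\mu),
\end{equation*}
where both remainders are bounded by $C\e^{1/2}(K+\mu)\bigl(\varkappa^{-1/2}+|\ln\e(K+\mu)|\bigr)$. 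Recall that $\mu(\e)\to 0$ as $\e\to+0$ by \eqref{1.11}, so the product $(K+\mu)|\ln\e(K+\mu)|$ is bounded as $\e\to+0$, and in particular the remainder term is $o(\e^{-2})$ (in fact $O(\e^{1/2}|\ln\e|)$).

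Consequently, for all sufficiently small $\e$ one has
\begin{equation*}
\lambda_1(\tau_1,\e)-\lambda_1(\tau_0,\e)\geqslant \frac{1}{2}\cdot\frac{\tau_1^2}{\e^2}=\frac{1}{8\e^2},
\end{equation*}
and since this difference is a lower bound for the length of the interval $B_1(\e)$, the corollary follows. The only delicate point is that $\tau_1$ must lie in the regime $|\tau|<1-\varkappa$ where Theorem~\ref{th1.3} applies, but this is trivially arranged by our choice of $\varkappa$; no further estimate is required. Thus there is no real obstacle—the corollary is essentially an immediate bookkeeping consequence of \eqref{1.15} together with the continuity of the first band function in $\tau$.
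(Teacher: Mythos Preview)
Your argument is correct and is precisely the immediate deduction from Theorem~\ref{th1.3} that the paper has in mind (the paper gives no explicit proof beyond the phrase ``implies immediately''). One small slip: the product $(K+\mu)|\ln\e(K+\mu)|$ is \emph{not} bounded when $K>0$ --- it grows like $K|\ln\e|$ --- but your parenthetical estimate $O(\e^{1/2}|\ln\e|)$ for the full remainder is correct and is all you need, so the conclusion stands.
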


We mention that similar results for the homogenized Dirichlet and Neumann condition were established 
 in \cite[Sec.2]{BC}, \cite[Sec.2]{BBC1}.

In addition to the asymptotics of the first band functions, we describe the asymptotics for the bottom of the essential spectrum of $\mathcal{H}_\e$. We let
\begin{equation}\label{1.28}
\begin{aligned}
\tht(t_1,t_2):=&
2t_1\sum\limits_{n=1}^{\infty} \frac{1}{n \big( \sqrt{4n^2-t_2}+2n+t_1\big)}
\\
&-(t_2+t_1^2) \sum\limits_{n=1}^{\infty} \frac{1}{n \big( \sqrt{4n^2-t_2}+t_1\big)\big( \sqrt{4n^2-t_2}+2n+t_1\big)}
\end{aligned}
\end{equation}
We shall prove in Lemma~\ref{lm6.2} that the function $\tht$ is jointly holomorphic in $t_1$ and $t_2$.

\begin{theorem}\label{th1.5}
Suppose \ref{asA1}, \ref{asA2}, (\ref{1.5}), and (\ref{1.36}). Then the identity
\begin{equation}\label{1.30}
\inf\limits_{\tau\in[-1,1)} \l_1(\tau,\e)=\l_1(0,\e)
\end{equation}
holds true. The bottom $\l_1(0,\e)$ of the essential spectrum $\essspec(\mathcal{H}_\e)$ has the asymptotic expansion
\begin{equation}\label{1.34}
\l_1(0,\e)=\L\big(\e,\mu(\e)\big)+\Odr\Big((K+\mu)\E^{-2\e^{-1}} +\eta^{1/2}(K+\mu)+\e^{1/2}\eta^{1/2}(K+\mu)^{1/2}\Big),
\end{equation}
where $\L(\e,\mu)$ is the root of the equation
\begin{equation}\label{1.31}
\begin{aligned}
\big(\sqrt{\L}\cos\sqrt{\L}\pi+(b+K+\mu)\sin\sqrt{\L}\pi
&\big)\big(1-\e(K+\mu)\tht(\e b,\e^2\L)\big)
\\
&+\e(K+\mu)^2\tht(\e b,\e^2\L)\sin\sqrt{\L}\pi=0,
\end{aligned}
\end{equation}
satisfying
\begin{equation}\label{1.35}
\L(\e,\mu)=\L_1(\e,\mu)+o(1),\quad \e\to+0,\quad\mu\to+0.
\end{equation}
The function $\L$ is jointly holomorphic in $\e$ and $\mu$ and can be represented as the uniformly convergent series
\begin{equation}\label{1.32}
\L(\e,\mu)=\L_1(\mu)+\sum\limits_{j=2}^{\infty}\e^j \Ups_j(\mu),
\end{equation}
where the functions $\Ups_j$ are holomorphic in $\mu$. All these functions can be found explicitly and, in particular, we have
\begin{equation}\label{1.33}
\begin{aligned}
&\Ups_1(\mu)=\frac{\pi^2 b}{6} \frac{\L_1(\mu)\big(\L_1(\mu)+\mu\big)^2} {\pi (K+b+\mu)^2+\L_1\pi+b+K+\mu},
\\
&\Ups_2(\mu)=-\frac{\z(3)}{4} \frac{\L_1(\mu)(K_0+\mu)^2\big(\L_1(\mu)+2b^2
\big)}{\pi (K+b+\mu)^2+\L_1\pi+b+K+\mu},
\end{aligned}
\end{equation}
where $\z(t)$ is the Riemann zeta-function.
\end{theorem}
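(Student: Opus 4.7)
I would establish the five claims of the theorem in three stages: first the ground-state identity (\ref{1.30}); then the construction of an approximate eigenvalue $\L(\e,\mu)$ satisfying (\ref{1.31}) together with the error bound (\ref{1.34}); and finally the analytic structure of $\L(\e,\mu)$ captured by (\ref{1.32})--(\ref{1.33}).

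For (\ref{1.30}), the gauge substitution $u=\E^{-\iu\tau x_1/\e}v$ eliminates the $\tau/\e$ term from the gradient in $\hpe(\tau)$ and turns the periodic lateral boundary conditions into $\tau$-quasiperiodic ones, so $\Hpe(\tau)$ is unitarily equivalent to the non-magnetic Laplacian on $\Om_\e$ with the original top/bottom conditions and $\tau$-quasiperiodic lateral ones. If $v_\tau$ is a minimizer at quasimomentum $\tau$, then $|v_\tau|$ is lateral-periodic (since $|v_\tau(x_1+\e\pi)|=|v_\tau(x_1)|$) and $|\nabla|v_\tau||\leqslant|\nabla v_\tau|$ a.e., so $|v_\tau|$ is admissible at $\tau=0$ with no larger Rayleigh quotient; this yields $\l_1(0,\e)\leqslant\l_1(\tau,\e)$ for every $\tau$, in the same spirit as \cite{BC,BBC1}.

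At $\tau=0$ I would construct an approximate eigenpair by matched asymptotic expansions. The outer ansatz $U(x_2)$ is $x_1$-independent and obeys $-U''=\L U$ on $(0,\pi)$ with $U(\pi)=0$ and an effective Robin condition at $x_2=0$ to be identified. The inner ansatz, written in the rescaled variables $\xi=x/\e$, solves a mixed Dirichlet/Robin problem for $-\D_\xi+\e^2\L$ in a semi-strip representing one period, with boundary term $\e b$ on the Robin segment; the transverse Fourier expansion in modes with characteristic exponents $\sqrt{4n^2-\e^2\L}$ yields precisely the series (\ref{1.28}) defining $\tht(\e b,\e^2\L)$. Matching modifies the effective Robin condition by a correction proportional to $-\e(K+\mu)\tht(\e b,\e^2\L)$, and the solvability condition for the one-dimensional problem with this modified Robin condition at $x_2=0$ and $U(\pi)=0$ is precisely (\ref{1.31}). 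The error estimate (\ref{1.34}) then comes from plugging the truncated ansatz into $\Hpe(0)-\L$: the exponentially small $(K+\mu)\E^{-2/\e}$ arises from truncating the decaying boundary-layer tails at distance $\Odr(1)$, and the $\eta$-dependent terms reflect the deviation of the aperiodic widths $a_j^\pm(\e)$ from the reference $\eta(\e)$ controlled by \ref{asA1}--\ref{asA2}; a Vishik--Lyusternik almost-eigenvalue lemma then converts the residue into the asserted bound on $\l_1(0,\e)-\L(\e,\mu)$.

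Writing (\ref{1.31}) as $F(\L,\e,\mu)=0$, the function $F$ is jointly holomorphic (since $\tht$ is, by Lemma~\ref{lm6.2}), and at $\e=0$ the equation reduces to (\ref{1.17}), which has the simple root $\L=\L_1(\mu)$; a direct differentiation (using $F_0(\L_1)=0$) shows that $\p_\L F(\L_1(\mu),0,\mu)$ is a nonzero multiple of the denominator $D$ appearing in (\ref{1.33}), so the analytic implicit function theorem yields a unique holomorphic solution $\L(\e,\mu)$ with (\ref{1.35}) and the convergent series (\ref{1.32}). The explicit coefficients (\ref{1.33}) follow from the Taylor expansion of $\tht(\e b,\e^2\L)$ at $\e=0$: the derivative $\p_{t_1}\tht(0,0)=2\sum_{n=1}^{\infty}(4n^2)^{-1}=\pi^2/12$, combined with the factor $2\L_1/D$ coming from $\p_\L F$, produces the $\pi^2 b/6$ in $\Ups_1$, while the coefficient $\sum_{n=1}^{\infty}n^{-3}=\z(3)$ arising from the $t_2$-derivative produces the $\z(3)/4$ in $\Ups_2$. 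The technical heart of the argument is the matched asymptotic construction in the previous paragraph---uniform-in-$j$ control of the aperiodic geometry via \ref{asA1}--\ref{asA2} and the identification of the effective correction with the explicit series (\ref{1.28})---whereas the remaining parts are standard analytic perturbation theory and routine Taylor computation.
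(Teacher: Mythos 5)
For (\ref{1.30}) you take a different and cleaner route than the paper: you gauge away $\tau$ via $u=\E^{-\iu\tau x_1/\e}v$ and invoke the diamagnetic inequality $|\nabla|v||\leqslant|\nabla v|$ on the $\tau$-quasiperiodic minimizer; since the transformed form $\|\nabla v\|^2_{L_2(\Om_\e)}+b\,\|v\|^2_{L_2(\Gp_\e)}$ only sees $|v|$ on the Robin boundary, $|v_\tau|$ is an admissible periodic trial function with no larger Rayleigh quotient, and (\ref{1.30}) follows directly. The paper instead proves a crude lower bound $\l_1(\tau,\e)\geqslant\tau^2/\e^2+T^2$ by dropping the Dirichlet part of the boundary condition, concludes that the infimum is attained for $|\tau|\lesssim\e$, and then appeals to Temple-inequality arguments from \cite[Sec.5]{BC} to handle that small neighbourhood. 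Under (\ref{1.36}) (so $A\equiv0$) your route is simpler and perfectly valid.

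The matched-asymptotic construction, however, has a genuine gap. You use a single inner scale $\xi=x/\e$ and claim that the Fourier expansion of a half-strip problem with \emph{mixed} Dirichlet/Robin data on $\xi_2=0$ yields the series (\ref{1.28}). That is not how things actually work. The series (\ref{1.28}) arises from the \emph{pure Robin} boundary-layer problem (\ref{6.10}) for $Z$ in Lemma~\ref{lm6.2} (cf.\ the separated-variables representation (\ref{6.16}) with exponents $\sqrt{4n^2-\b^2}$); there the Dirichlet segment has degenerated to a single point, replaced by a prescribed logarithmic singularity at $\xi=0$. The actual Dirichlet condition on $\gp_\e$, of length $O(\e\eta)$, is handled at a \emph{second, finer} inner scale $\vs=\xi/\eta$ via the harmonic functions $Y$ of (\ref{3.42}) and $Y_1$ of Lemma~\ref{lm6.3}. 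It is precisely the logarithmic matching between the boundary layer and this inner zone — rewriting $\ln|\xi|=\ln\eta+\ln|\vs|$ and inserting $\e\ln\eta=-(K+\mu)^{-1}$ from (\ref{1.11}) — that produces the factors $(K+\mu)$ in (\ref{1.31}); with a single inner scale you can neither satisfy the Dirichlet condition on $\gp_\e$ nor derive those factors, and in your proposal they appear by fiat. In addition, your explanation of the $\eta^{1/2}$-terms in (\ref{1.34}) as ``deviation of the aperiodic widths $a_j^\pm(\e)$ from the reference $\eta(\e)$'' cannot be correct: Theorem~\ref{th1.5} assumes (\ref{1.36}), which fixes $a_j^\pm(\e)=\eta(\e)$ and $\vt_\e(s)\equiv s$, so the alternation is strictly periodic. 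Those terms come from truncating the inner expansion at scale $\eta$ (this is the content of Lemma~\ref{lm6.4}), not from any aperiodicity.

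The analytic-implicit-function step for (\ref{1.32})--(\ref{1.33}) is described correctly: $\tht$ is jointly holomorphic by Lemma~\ref{lm6.2} with $\tht(0,0)=0$, so $\partial_\e F$ vanishes at $\e=0$ and the expansion indeed starts at $\e^2$; the Taylor coefficients of $\tht$ in (\ref{1.28}) give $\pi^2/12$ and $\z(3)/8$, and $\partial_\L F(\L_1(\mu),0,\mu)$ is proportional to the common denominator in (\ref{1.33}). But this final step is contingent on having (\ref{1.31}) in the first place, and that is exactly what your one-scale construction fails to produce.
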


The first result of this theorem, namely the identity (\ref{1.30}), was proven in \cite[Sec.2]{BBC1} and \cite[Sec.2]{BC} for the case of the homogenized Dirichlet and Neumann condition. The main result of Theorem~\ref{th1.5} is the asymptotics (\ref{1.34}). Although it contains just one term $\L(\e,\mu)$, this term is holomorphic w.r.t. $\e$ and $\mu$ and is defined quite explicitly as the solution to the equation (\ref{1.31}), (\ref{1.35}). This solution can be represented as the convergent series (\ref{1.32}) and the latter can be regarded as the asymptotics for $\L$. In other words, the power in $\e$ and $\eta$ part in the asymptotics for $\L$ can be summed up to the exponentially small error, see (\ref{1.34}). It is a strong result for the problem in the homogenization theory, since usually complete asymptotic expansions in the homogenization theory can not be summed up. We also mention that previously similar result was only known in the case $K=b=0$, see \cite[Sec.2]{BBC1}.

In conclusion to this section let us discuss some open questions related to the studied model. The main question is the gap conjecture, namely, the question on the gaps in the essential spectrum of $\mathcal{H}_\e$. It is not known whether such gaps exist or not. If exist, it would be interesting to prove the Bethe-Sommerfeld conjecture for our model, namely, that the number of the gaps is finite. The next question would be the dependence of the number, the location, and the sizes of the gaps on $\e$. These questions were formulated even in the first paper \cite{BC}, where the studied model was proposed. Our conjecture was that the gaps exist and a possible way of proving it was to study in more details the behavior of the band functions as $\e\to+0$. In fact, the idea was to extend the theorems like Theorem~\ref{th1.3}, trying to include into consideration vicinities of the points $\tau=\pm -1$, to construct the complete asymptotic expansions for $\l_n$, and to get these results not only for the first ones but for all band functions $\l_n$. Now we have to admit that these questions are much more complicated as we expected in the beginning. In particular, one needs to develop a new technique in comparison with ours to construct the complete asymptotic expansions and/or to include the vicinities of the points $\tau=\pm 1$. Moreover, there is a usual problem with ordering of the band functions. Indeed, even in the case $b=K=0$, if we consider the homogenized operator as the periodic one with the period $\e\pi$, then the associated band functions are $\frac{(2m+\tau)^2}{\e^2}+\left(n+\frac{1}{2}\right)^2,\  m,n\in \mathds{Z}_+$. Then the issue of putting these eigenvalues in the ascending order for all values of $\e$ is not trivial. Attempts to improve (\ref{1.18}) seem to be not useful, since such convergence can not describe in an appropriate way the desired asymptotic behavior of the band functions. This is why to solve the gap conjecture for our model, one has to develop a new  approach. It is likely that the approaches used in the proofs of the Bethe-Sommerfeld conjecture for various operators can be useful. Moreover, now we conjecture that our model has no gaps in the essential spectrum. The first argument supporting such conjecture is that the homogenized operator has no gaps at all and the number of overlapping bands in its essential spectrum increases as the point goes to infinity. The second argument is that we can make the rescaling in the original operator $x\mapsto x\e^{-1}$. Under such rescaling the operator $\mathcal{H}_\e$ becomes the operator $-\D$ in the strip $\{x: 0<x_2<\pi\e^{-1}\}$ with fixed alternation of the boundary conditions. As $\e\to+0$, the new strip ``tends'' to the half-space $x_2>0$. Hence, one can expect that the spectrum of the rescaled operator converges in certain sense to that of $-\D$ in $x_2>0$ with the original fixed alternation. And since the spectrum of the latter is always $[0,+\infty)$, we can conjecture that there is no gaps in the perturbed spectrum. If the essential spectrum indeed has no gaps, then Theorem~\ref{th1.5} and its analog in \cite{BC} gives the complete description of the location of the essential spectrum, namely, of its bottom.

\section{Resolvent convergence: homogenized Robin condition}

In this section we prove Theorem~\ref{th1.1}.

Given a function $f\in L_2(\Om)$, we denote
\begin{equation*}
u_\e:=(\mathcal{H}_\e-\iu)^{-1}f,\quad
u^{(\mu)}:=(\mathcal{H}_{\mathrm{R}}^{(\mu)}-\iu)^{-1}f.
\end{equation*}

We first prove an auxiliary lemma which will be one of the key ingredients in the proof of Theorem~\ref{th1.1}.

\begin{lemma}\label{lm3.1}
Let $W=W(x,\e,\mu)$ be a real function
belonging to
\begin{equation}\label{3.0}
 C(\overline{\Om})\cap
C^2(\overline{\Om}\setminus\{x: x_1=\e s_j^\e\pm \e a_j^\pm(\e),\  x_2=0,\ j\in\mathds{Z}\}),
\end{equation}
satisfying boundary conditions
\begin{equation}\label{3.1}
W=-1 \quad\text{on}\quad \g_\e,\qquad \frac{\p W}{\p
x_2}=-(K+\mu)(1+\e\vp_\e)\vt_\e'\quad\text{on}\quad \G_\e,
\end{equation}
bounded uniformly in $\overline{\Om}$, and
having differentiable asymptotics
\begin{equation}\label{3.2}
W(x,\e,\mu)=c_j^\pm(\e,\mu)
\sqrt{r_j^\pm}\sin\frac{\tht_j^\pm}{2}+\Odr(r_j^\pm),\quad r_j^\pm\to+0.
\end{equation}
Here $\vp_\e=\vp_\e(x_1)\in C(\mathds{R})$ is a bounded function, $(r_j^\pm,\tht_j^\pm)$ are polar coordinates centered at $(\e s_j^\e \pm\e a_j^\pm(\e),0)$ such that the values $\tht_j^\pm=0$ correspond to the
points of $\g_\e$. Assume also that $\D W\in C(\overline{\Om})$ and the function $\D W$ is bounded uniformly in $\overline{\Om}$. Let
\begin{equation}\label{3.6}
v_\e:=u_\e-(1+W)u^{(\mu)}.
\end{equation}
Then $(1+W)u^{(\mu)}$ belongs to $\Ho^1(\Om,\G_+\cup\g_\e)$, and
\begin{equation}\label{3.3}
\begin{aligned}
\|(\nabla+\iu A) v_\e&\|_{L_2(\Om)}^2+(V v_\e, v_\e)_{L_2(\Om)}+ (b v_\e,v_\e)_{L_2(\G_\e)}
+\iu\|v_\e\|_{L_2(\Om)}^2
\\
=& (f,v_\e
W)_{L_2(\Om)}+(u^{(\mu)}\D W,v_\e)_{L_2(\Om)}-2\iu(u^{(\mu)}W, v_\e)_{L_2(\Om)}
\\
&-2(Vu^{(\mu)},Wv_\e)_{L_2(\Om)}-2\big(W(\nabla+\iu A)u^{(\mu)}, (\nabla+\iu A)v_\e\big)_{L_2(\Om)}
\\
&-\Big(\big(2b+(K+\mu)(1+\e\vp_\e)\vt_\e'\big)u^{(\mu)}, Wv_\e\Big)_{L_2(\G_\e)}.
\end{aligned}
\end{equation}
\end{lemma}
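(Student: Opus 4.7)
The proof proceeds in two steps. First, I would verify that $(1+W)u^{(\mu)}$ belongs to $\Ho^1(\Om,\G_+\cup\g_\e)$, and consequently that $v_\e$ is an admissible test function in the weak formulation of $\mathcal{H}_\e$. Since $u^{(\mu)}\in\H^2(\Om)$ and $W\in C(\overline{\Om})\cap L_\infty(\Om)$, the product rule gives $\nabla((1+W)u^{(\mu)})=(1+W)\nabla u^{(\mu)}+u^{(\mu)}\nabla W$, and the asymptotic (\ref{3.2}) implies $|\nabla W|^2=O(r^{-1})$ near each singular endpoint $(\e s_j^\e\pm\e a_j^\pm(\e),0)$, which is locally integrable in dimension two. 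Combined with the uniform boundedness of $W$ and $\D W$, this yields $(1+W)u^{(\mu)}\in\H^1(\Om)$. The traces vanish on $\G_+$ since $u^{(\mu)}=0$ there, and on $\g_\e$ since $W=-1$ there. An analogous argument gives $(1+W)v_\e\in\Ho^1(\Om,\G_+)$, the appropriate test space for $\mathcal{H}_{\mathrm{R}}^{(\mu)}$.

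Next I would test $(\mathcal{H}_\e-\iu)u_\e=f$ against $v_\e$ and $(\mathcal{H}_{\mathrm{R}}^{(\mu)}-\iu)u^{(\mu)}=f$ against $(1+W)v_\e$ and subtract. Using that $W$ is real-valued together with the identity $u_\e=v_\e+(1+W)u^{(\mu)}$, one computes $(u_\e,v_\e)_{L_2(\Om)}-(u^{(\mu)},(1+W)v_\e)_{L_2(\Om)}=\|v_\e\|_{L_2(\Om)}^2$, which collapses the $\iu$-inner-product contributions and yields a master identity relating $\mathfrak{h}_\e[u_\e,v_\e]$, $\mathfrak{h}_{\mathrm{R}}^{(\mu)}[u^{(\mu)},(1+W)v_\e]$ and $(f,Wv_\e)_{L_2(\Om)}$. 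Writing $\mathfrak{h}_\e[v_\e,v_\e]=\mathfrak{h}_\e[u_\e,v_\e]-\mathfrak{h}_\e[(1+W)u^{(\mu)},v_\e]$ and substituting this master identity then reduces the proof of (\ref{3.3}) to the explicit evaluation of the asymmetric difference
\begin{equation*}
\Delta\mathfrak{h}:=\mathfrak{h}_{\mathrm{R}}^{(\mu)}[u^{(\mu)},(1+W)v_\e]-\mathfrak{h}_\e[(1+W)u^{(\mu)},v_\e].
\end{equation*}

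To evaluate $\Delta\mathfrak{h}$, I would apply the magnetic Leibniz rule $(\iu\nabla+A)((1+W)g)=(1+W)(\iu\nabla+A)g+\iu g\nabla W$ in both arguments. Since $W$, $V$ and $b$ are real-valued, the $(1+W)$-weighted leading gradient, the $V$-potential and the $b$-boundary contributions cancel in pairs; the $(K+\mu)\vt_\e'$-boundary contribution from $\mathfrak{h}_{\mathrm{R}}^{(\mu)}$ is supported on $\G_\e$ (since $v_\e=0$ on $\g_\e$) and survives. The remaining gradient cross terms
\begin{equation*}
\big((\iu\nabla+A)u^{(\mu)},\iu v_\e\nabla W\big)_{L_2(\Om)}-\big(\iu u^{(\mu)}\nabla W,(\iu\nabla+A)v_\e\big)_{L_2(\Om)}
\end{equation*}
are recast, via the identity $\iu g\nabla W=(\iu\nabla+A)(Wg)-W(\iu\nabla+A)g$, as $\mathfrak{g}[u^{(\mu)},Wv_\e]-\mathfrak{g}[Wu^{(\mu)},v_\e]$, where $\mathfrak{g}$ is the leading bilinear part of $\mathfrak{h}_\e$. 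The first summand is handled by testing the equation for $u^{(\mu)}$ against $Wv_\e\in\Ho^1(\Om,\G_+\cup\g_\e)$, which produces the factors $2(f,Wv_\e)$, $2\iu(u^{(\mu)}W,v_\e)$, $2(Vu^{(\mu)},Wv_\e)$ and $2((b+(K+\mu)\vt_\e')u^{(\mu)},Wv_\e)_{\G_\e}$ after combining with the $-2(W(\iu\nabla+A)u^{(\mu)},(\iu\nabla+A)v_\e)$ contribution. The second summand is handled by Green's formula applied in $\Om\setminus\bigcup_{j,\pm}B_\d$, with $B_\d$ small disks of radius $\d$ around each singular endpoint, and passing $\d\to0$; the asymptotic (\ref{3.2}) gives $|W|=O(\d^{1/2})$ and $|\nabla W|=O(\d^{-1/2})$ on $\p B_\d$, so the artificial contributions from $\p B_\d$ are of order $\d^{1/2}$ and vanish in the limit. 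The surviving terms are the volume integral $(u^{(\mu)}\D W,v_\e)_{L_2(\Om)}$ and the boundary integral on $\G_\e$ obtained by inserting $\p_{x_2}W=-(K+\mu)(1+\e\vp_\e)\vt_\e'$ from (\ref{3.1}). Assembling these contributions produces the right-hand side of (\ref{3.3}).

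The principal technical obstacle is the controlled integration by parts near the endpoints where $W$ is only $\sqrt{r}$-regular: $\nabla W$ is square-integrable but not uniformly bounded, so Green's identity must be justified via the cut-off-and-limit argument above, and one has to verify that no spurious boundary contributions arise in the limit. A secondary bookkeeping challenge is to organize the cancellations that eliminate the real-weighted terms from $\Delta\mathfrak{h}$ and to combine them consistently with the auxiliary use of the $u^{(\mu)}$-equation tested against $Wv_\e$, so that the precise coefficient $2b+(K+\mu)(1+\e\vp_\e)\vt_\e'$ emerges on $\G_\e$ and the factors of $2$ appear in front of the correct $W$-weighted volume quantities.
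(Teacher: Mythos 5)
Your proposal is correct and follows essentially the same line of argument as the paper's own proof: test the weak formulations of $\mathcal{H}_\e$ and $\mathcal{H}_\mathrm{R}^{(\mu)}$ against $v_\e$ and $(1+W)v_\e$ respectively, subtract, peel off the gradient cross terms with the magnetic Leibniz rule, eliminate one copy of the cross term by re-testing the $u^{(\mu)}$-equation against $Wv_\e$, and eliminate the other by Green's formula using the boundary conditions \eqref{3.1}; the paper performs these steps in the opposite order (Green's formula first, producing the factor of $2$, then the $Wv_\e$-test), but the structure is identical. Your explicit excision of small disks $B_\d$ around the singular endpoints and the $O(\d^{1/2})$ estimate justifying that the artificial boundary contributions vanish is a welcome technical refinement that the paper leaves implicit, relying on the $\sqrt{r}$-regularity implied by \eqref{3.2}; this and the packaging into $\Delta\mathfrak{h}$ are the only real departures, and they are presentational rather than substantive.
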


\begin{proof}
The functions $u_\e$ and $u^{(\mu)}$ satisfy the integral identities
\begin{equation}\label{3.4}
\begin{aligned}
\big((\nabla+\iu A)u_\e, & (\nabla+\iu A) \phi\big)_{L_2(\Om)}+(Vu_\e,\phi)_{L_2(\Om)}
\\
& + (bu_\e,\phi)_{L_2(\G_\e)}
+\iu (u_\e,\phi)_{L_2(\Om)}=
(f,\phi)_{L_2(\Om)}
\end{aligned}
\end{equation}
for all $\phi\in \Ho^1(\Om,\G_+\cup\g_\e)$, and
\begin{equation}
\begin{aligned}
\big((\nabla+\iu A)u^{(\mu)}, & (\nabla+\iu A) \phi\big)_{L_2(\Om)}+(Vu^{(\mu)},\phi)_{L_2(\Om)}
\\
& + \big((b+(K+\mu)\vt_\e')u^{(\mu)},\phi\big)_{L_2(\G_\e)}
+\iu (u^{(\mu)},\phi)_{L_2(\Om)}=
(f,\phi)_{L_2(\Om)}
\end{aligned} \label{3.5}
\end{equation}
for all $\phi\in \Ho^1(\Om,\G_+)$. By analogy with the proof of Lemma~3.2 in \cite{BC}, one can show that
$(1+W)\phi\in
\Ho^1(\Om,\G_+\cup\g_\e)$ for each $\phi$ which belongs to the domain of either
$\mathcal{H}_\e$ or $\mathcal{H}_{\mathrm{R}}^{(\mu)}$. Hence
\begin{equation*}
(1+W)u^{(\mu)}\in
\Ho^1(\Om,\G_+\cup\g_\e),\quad (1+W)v_\e\in \Ho^1(\Om,\G_+\cup\g_\e).
\end{equation*}

We choose the test function in (\ref{3.5}) as $\phi=(1+W)v_\e$. It yields
\begin{align*}
\big((\nabla+\iu A) u^{(\mu)}&,(\nabla+\iu A) (1+W)v_\e)_{L_2(\Om)}+(V u^{(\mu)}, (1+W)v_\e)_{L_2(\Om)}
\\
& + \big((b+(K+\mu)\vt_\e') u^{(\mu)}, (1+W) v_\e\big)_{L_2(\G_-)}+\iu (u^{(\mu)},(1+W)v_\e)_{L_2(\Om)}
\\
= &(f,(1+W)v_\e)_{L_2(\Om)},
\end{align*}
and we rewrite this identity as
\begin{align*}
\big((\nabla+\iu A) u^{(\mu)}&,(1+W)(\nabla+\iu A) v_\e)_{L_2(\Om)}+(V u^{(\mu)}, (1+W)v_\e)_{L_2(\Om)}
\\
& + (b u^{(\mu)}, (1+W) v_\e)_{L_2(\G_-)}+\iu (u^{(\mu)},(1+W)v_\e)_{L_2(\Om)}
\\
= &(f,(1+W)v_\e)_{L_2(\Om)}-\big((\nabla+\iu A) u^{(\mu)},v_\e\nabla W)_{L_2(\Om)}
\\
&-
\big((K+\mu)\vt_\e' u^{(\mu)},(1+W)v_\e\big)_{L_2(\G_\e)},
\end{align*}
and
\begin{align*}
\big((\nabla+\iu A) (1+W)& u^{(\mu)},(\nabla+\iu A) v_\e)_{L_2(\Om)}+(V(1+W) u^{(\mu)},v_\e)_{L_2(\Om)}
\\
& + (b (1+W) u^{(\mu)}, v_\e)_{L_2(\G_-)}+\iu ((1+W)u^{(\mu)},v_\e)_{L_2(\Om)}
\\
= &(f,(1+W)v_\e)_{L_2(\Om)}-
\big((K+\mu)\vt_\e' u^{(\mu)},(1+W)v_\e\big)_{L_2(\G_\e)}
\\
&
-\big((\nabla+\iu A) u^{(\mu)},v_\e\nabla W)_{L_2(\Om)}
+\big( u^{(\mu)}\nabla W,(\nabla+\iu A)v_\e)_{L_2(\Om)}.
\end{align*}
We deduct the last identity from (\ref{3.4}) with $\phi=v_\e$ and get,
\begin{equation}\label{3.8}
\begin{aligned}
\|(\nabla+\iu A) v_\e&\|_{L_2(\Om)}^2+(V v_\e, v_\e)_{L_2(\Om)}+ (b v_\e,v_\e)_{L_2(\G_\e)}
+\iu\|v_\e\|_{L_2(\Om)}^2
\\
=&-(f,W v_\e)_{L_2(\Om)}+  \big((K+\mu)\vt_\e' u^{(\mu)},(1+W)v_\e\big)_{L_2(\G_\e)}
\\
&
+\big((\nabla+\iu A) u^{(\mu)},v_\e\nabla W)_{L_2(\Om)}
-\big( u^{(\mu)}\nabla W,(\nabla+\iu A)v_\e)_{L_2(\Om)}.
\end{aligned}
\end{equation}
Now we consider separately the two last terms in the right hand side of equation (3.9). We integrate by parts and employ the boundary condition (\ref{3.1}), the identity (\ref{3.5}), and the belongings (\ref{3.6}),
\begin{align*}
&\big((\nabla+\iu A) u^{(\mu)},v_\e\nabla W)_{L_2(\Om)}
-\big( u^{(\mu)}\nabla W,(\nabla+\iu A)v_\e)_{L_2(\Om)}
\\
&=\big((\nabla+\iu A) u^{(\mu)},v_\e\nabla W)_{L_2(\Om)}+ \int\limits_{\G_\e}
u^{(\mu)}\frac{\p W}{\p x_2}\overline{v}_\e\di x_1
\\
&\hphantom{=}\,+ (\mathrm{div}\,
u^{(\mu)}\nabla W,v_\e)_{L_2(\Om)}- (u^{(\mu)}\nabla W,\iu A v_\e)_{L_2(\Om)}
\\
&=2\big((\nabla+\iu A) u^{(\mu)},v_\e\nabla W)_{L_2(\Om)}+ (u^{(\mu)}\D W,v_\e)_{L_2(\Om)}
\\
&\hphantom{=}\,-\big((K+\mu) (1+\e\vp_\e)\vt_\e' u^{(\mu)}, v_\e\big)_{L_2(\G_\e)},
\end{align*}
and
\begin{align*}
\big((\nabla+\iu A)u^{(\mu)}, v_\e \nabla W\big)_{L_2(\Om)}= & \big((\nabla+\iu A)u^{(\mu)}, (\nabla+\iu A) v_\e W\big)_{L_2(\Om)}
\\
&- \big((\nabla +\iu A)u^{(\mu)}, W (\nabla +\iu A) v_\e\big)_{L_2(\Om)}
\\
=&(f,W v_\e)_{L_2(\Om)} -(V u^{(\mu)},W v_\e)_{L_2(\Om)} -\iu (u^{(\mu)}, W v_\e)_{L_2(\Om)}
\\
&-\big((b+(K+\mu)\vt_\e')u^{(\mu)}, Wv_\e\big)_{L_2(\G_\e)}
\\
&- \big((\nabla +\iu A)u^{(\mu)}, W (\nabla +\iu A) v_\e\big)_{L_2(\Om)}.
\end{align*}
These equations and (\ref{3.8}) imply (\ref{3.3}).
\end{proof}

Our next step is to construct the function $W$ satisfying the hypothesis of the proven lemma. Then we shall be able to estimate the right hand side of (\ref{3.3}) and as a result the
$\H^1(\Om)$-norm of $v_\e$. Exactly the last estimate will allow us to prove Theorem~\ref{th1.1}.

To construct the function $W$, we employ the technique from \cite{B03}, \cite{AHP-4}, \cite{AHP-25}, \cite{AHP-24}  based on the method of matching asymptotic expansions \cite{Il}, the boundary layer method \cite{VL}, and the multiscale method \cite{MSM}. For the periodic case similar approach has already been successfully applied in \cite[Sec.3]{BBC1}. In order to treat the nonperiodic case we employ some ideas in \cite{BBC1} in combination with the technique of \cite{B03}.

We seek $W$ as a formal asymptotic solution to the equation
\begin{equation*}
\D W=0\quad\text{in}\quad\Om
\end{equation*}
satisfying the boundary conditions (\ref{3.1}). This solution is constructed as a sum of an outer expansion, a boundary layer, and an inner expansion. We begin with the boundary layer.

The main idea of introducing the boundary layer is to satisfy the Neumann condition on $\G_\e$. We first define the rescaled variables
\begin{equation}\label{3.34}
\xi=(\xi_1,\xi_2),\quad \xi_1=\frac{\vt_\e(x_1)}{\e},\quad \xi_2=\vt_\e'(x_1)\frac{x_2}{\e}.
\end{equation}
This change of variables maps the points $x=(\e s_j^\e,0)$ into a periodic set $\xi=(\pi j,0)$, while the Laplacian becomes $\e^{-2}\big(\vt_\e'(x_1)\big)^2\D_\xi+\Odr(\e^{-1})$. These two facts are exactly the motivation of defining the rescaled variables for the boundary layer by (\ref{3.34}). We construct the latter as
\begin{equation}\label{3.35}
W_\e^{\mathrm{bl}}(\xi,x_1,\mu)=\e (K+\mu)\big(1+\e\vp_\e(x_1)\big) X(\xi),
\end{equation}
and the function $\vp_\e$ will be determined below.

We substitute (\ref{3.35}) into the boundary value problem for $W$, replace then the set $\G_\e$ by $\G_-\setminus\bigcup\limits_{j\in\mathds{Z}} (\e s_j^\e,0)$, and equate the coefficients at the leading power of $\e$. It implies the boundary value problem for $X$,
\begin{equation}\label{3.36}
\D_\xi X=0,\quad \xi_2>0,\qquad \frac{\p X}{\p\xi_2}=-1, \quad
\xi\in\G^0:=\{\xi: \xi_2=0\}
\setminus\bigcup\limits_{j=-\infty}^{+\infty}(\e\pi j,0).
\end{equation}
We are interested in a solution to this problem which is $\e\pi$-periodic in $x_1$ and decays exponentially
 as $\xi_2\to+\infty$. Such solution was found explicitly in \cite{AHP-23},
\begin{equation}\label{3.37}
X(\xi):=\RE\ln\sin(\xi_1+\iu\xi_2)+\ln 2-\xi_2,
\end{equation}
where the branch of the logarithm is fixed by the requirement  $\ln 1=0$. Some additional properties of this function were established. Namely, it was shown that the function $X$ belongs to $C^\infty(\{\xi: \xi_2>0\}\cup\G^0)$
and satisfies the differentiable asymptotics
\begin{equation}\label{3.13}
X(\xi)=\ln|\xi-(\pi j,0)|+\ln 2-\xi_2+\Odr(|\xi-(\pi
j,0)|^2),\quad \xi\to(\pi j,0),\quad j\in\mathds{Z}.
\end{equation}

The constructed boundary layer does not satisfy the required boundary condition (\ref{3.1}) on $\g_\e$ and it has also logarithmic singularities at the points $(\e s_j^\e,0)$ because of (\ref{3.13}). This is the reason why in a vicinity of these points we introduce the inner expansion and construct it by using the method of matching asymptotic expansions. In a vicinity of each point $(\e s_j^\e,0)$ we introduce one more rescaled variables,
\begin{equation*}
\vs^{(j)}=(\vs^{(j)}_1,\vs^{(j)}_2), \quad \vs^{(j)}_1:= (\xi_1-\pi j)\eta^{-1},\quad \vs^{(j)}_2:=\xi_2\eta^{-1}.
\end{equation*}
It follows from (\ref{3.13}), (\ref{3.35}) that
\begin{equation}\label{3.39}
\begin{aligned}
W_\e^{\mathrm{bl}}(\xi,x_1,\mu)=&-1-\e\vp_\e(x_1)+\e (K+\mu)\big(1+\e\vp_\e(x_1)\big) \big(\ln|\vs^{(j)}|+\ln 2-\eta\vs_2^{(j)}\big)
\\
&+\Odr\big(\e(K+\mu)\eta^2|\vs^{(j)}|^2\big),\quad \xi\to(\pi j,0).
\end{aligned}
\end{equation}
Hence, in accordance with the method of matching the asymptotic expansions we should construct the inner expansion for $W$ as
\begin{equation}\label{3.38}
W_\e^{\mathrm{in},j}(\vs^{(j)},x_1,\mu)=-1+\e Y^{(j)}(\vs^{(j)},\e),
\end{equation}
and the functions $Y^{(j)}(\vs^{(j)},\e)$ should satisfy the asymptotics
\begin{equation}\label{3.41}
Y^{(j)}(\vs^{(j)},\e)=(K+\mu)\big(1+\e\vp_\e(x_1)\big)(\ln|\vs^{(j)}|+\ln 2)-\vp_\e(x_1)+o(1),\quad |\vs^{(j)}|\to\infty.
\end{equation}
We substitute (\ref{3.38}) into the boundary value problem for $W$ and equate the coefficients at the leading orders of the small parameters $\e$ and $\eta$. It leads us to the boundary value problem for $Y^{(j)}$,
\begin{equation}\label{3.40}
\begin{aligned}
&\D_{\vs^{(j)}} Y^{(j)}=0,\quad \vs_2^{(j)}>0,
\\
&Y^{(j)}=0,\quad \vs^{(j)}\in\g^{1,j}:=\{\vs^{(j)}:\, -2\a_j^-(\e)<\vs_1^{(j)}<2\a_j^+(\e),\,\vs^{(j)}_2=0\},
\\
&\frac{\p
Y^{(j)}}{\p\vs_2^{(j)}}=0,\quad \vs^{(j)}\in\G^{1,j}:= O\vs_1
\setminus\overline{\g^{1,j}},
\\
&\a_j^\pm(\e):=\pm\frac{\vt_\e(\e s_j^\e\pm\e a_j^\pm(\e))-\vt_\e(\e s_j^\e)}{2\e\eta}.
\end{aligned}
\end{equation}
We observe that due to (\ref{1.23}), (\ref{1.19}), (\ref{1.20}), (\ref{1.22}) the functions $\a_j^\pm(\e)$ satisfy the estimate
\begin{equation*}
|\a_j^\pm(\e)|\leqslant C,
\end{equation*}
where the constant $C$ is independent of $\e$ and $j$.

In \cite{AHP-23} a special solution to the problem (\ref{3.40}) was found for the case $\a_j^\pm(\e)=\pm 1$ being
\begin{equation}\label{3.42}
Y(\vs):=\RE\ln (z+\sqrt{z^2-1}),\quad z=\vs_1+\iu \vs_2.
\end{equation}
Here the branch  of the   square root is fixed by the requirements  $\sqrt{1}=1$. It was shown that the function $Y$ belongs to $C^\infty(\{\vs: \vs_2\geqslant 0,\ \vs\not=(\pm 1,0)\})$ and satisfies the asymptotics
\begin{equation}\label{3.15}
Y(\vs)=\ln|\vs|+\ln 2+\Odr(|\vs|^{-2}),\quad \vs\to\infty.
\end{equation}
By a trivial change of variables we can obtain then the solution for our case,
\begin{equation*}
Y^{(j)}(\vs^{(j)},\e)=(K+\mu)\big(1+\e\vp_\e(x_1)\big) Y\left(\frac{\vs^{(j)}_1+\a_j^-(\e)-\a_j^+(\e)
}{d_j(\e)}, \frac{\vs^{(j)}_2} {d_j(\e)}\right).
\end{equation*}
Hence, the function $Y^{(j)}$ has the same smoothness as $Y$ and
\begin{equation}\label{3.45}
\begin{aligned}
Y^{(j)}(\vs^{(j)},\e)=&(K+\mu)\big(1+\e\vp_\e(x_1)\big)\bigg(\ln|\vs^{(j)}|+\ln 2-\ln d_j(\e)
\\
&+ \big(\a_j^-(\e)-\a_j^+(\e)\big) \frac{\vs_1^{(j)}}{|\vs^{(j)}|^2}+\Odr(|\vs^{(j)}|^{-2})\bigg),\quad |\vs^{(j)}|\to+\infty.
\end{aligned}
\end{equation}
We compare this asymptotics with (\ref{3.41}) and see that the function $\vp_\e$ should satisfy the identity
\begin{equation}\label{3.46}
(K+\mu)\big(1+\e\vp_\e(x_1)\big) \ln d_j(\e)=\vp_\e(x_1)\quad \text{as}\quad x_1-\e s^\e_j=\Odr(\e\eta).
\end{equation}
We define such function as follows. Let $\chi$ be an infinitely differentiable cut-off function with values in $[0,1]$, being one as $t<1/4$ and vanishing as $t>3/4$. Denote
\begin{align*}
&d_j(\e):=\frac{\vt_\e\big(\e s_j^\e+\e a_j^+(\e)\big)-\vt_\e\big(\e s_j^\e-\e a_j^-(\e)\big)}{2\e\eta(\e)}
\end{align*}
and let
\begin{equation}\label{3.49}
g_\e(\tht):=d_{j+1}(\e)-\chi\bigg(\frac{\tht}{\e\pi}\bigg) \big(d_{j+1}(\e)-d_j(\e)\big),\quad \e\pi j\leqslant\tht\leqslant\e\pi(j+1),\quad j\in\mathbb{Z}.
\end{equation}
It is easy to see that in a $\e$-neighborhood of each point $\e s^\e_j$ we have $g_\e(\vt_\e(x_1))=d_j(\e)$. Then we replace $d_j$ in (\ref{3.46}) by $g_\e(\vt_\e(x_1))$ and determine $\vp_\e$,
\begin{equation}\label{3.50}
\vp_\e(x_1):=\frac{(K+\mu)\ln g_\e\big(\vt_\e(x_1)\big)}{1-\e(K+\mu)\ln g_\e(\vt_\e(x_1))}.
\end{equation}
It follows from the assumptions~\ref{asA1},~\ref{asA2} that the functions $g_\e(\vt_\e(x_1))$ and $\vt_\e(x_1)$ are bounded uniformly in $x_1$ and $\e$.

As we see from (\ref{3.39}), (\ref{3.45}), the term $-\e(K+\mu)\xi_2$ is not matched with any term in the inner expansion.
 It was found in \cite{AHP-4}, \cite{AHP-25}, \cite{AHP-24} that this term should be either matched or canceled in order
to have a good approximation by the constructed asymptotics. This is also the case for our problem and to compensate it, we add  the term $\e(K+\mu)\xi_2$ in the boundary layer and we introduce the term $-\e(K+\mu) x_2$ as the outer expansion. With this trick the final form for $W$ is
\begin{equation}\label{3.47}
\begin{aligned}
W(x,\e,\mu)=&-(K+\mu)\big(1+\e\vp_\e(x_1)\big) \vt_\e'(x_1) x_2 \\
&+ \e (K+\mu) \big(1+\e \vp_\e(x_1)\big) (X(\xi)+\xi_2) \prod\limits_{j=-\infty}^{+\infty} \big(1-\chi(|\vs^{(j)}|\eta^{3/4})\big)
\\
&+ \sum\limits_{j=-\infty}^{+\infty} \chi(|\vs^{(j)}|\eta^{3/4}) \big(-1+\e Y^{(j)}(\vs^{(j)},\e)\big).
\end{aligned}
\end{equation}
This function is well-defined since the product and the sum in its definition are always finite.

Let us check that the function $W$ satisfies the assumption of Lemma~\ref{lm3.1}. By direct calculations we check that the belonging (\ref{3.0}), the boundary conditions (\ref{3.1}), and the asymptotics (\ref{3.2}) hold. It follows from (\ref{3.0}), (\ref{3.1}) that $\D W\in C(\overline{\Om})$. It is also easy to check that this function is bounded uniformly in $\overline{\Om}$. Therefore, we can apply Lemma~\ref{lm3.1}.

Our next step is to estimate the right hand side of (\ref{3.3}). In order to do it, we need several auxiliary lemmas. The first two of them were proven in \cite{BBC1} for the periodic case $\vt_\e(s)\equiv s$, $a_j^\pm(\e)\equiv 1$. For our non-periodic case the proof is the same with several obvious minor changes. This is why we provide these lemmas without proofs.

Denote
\begin{equation*}
\Om^\d:=\bigcup\limits_{j=-\infty}^{+\infty}  \{x: |\xi- (s_j^\e,0)|< \d\}\cap\Om, \quad \g_\e^\d:= \bigcup\limits_{j=-\infty}^{+\infty} \{x: |\xi_1-s_j^\e|<\d, \, x_2=0\}.
\end{equation*}

\begin{lemma}\label{lm3.3}
For any $\d\in(0,\pi/2)$, $u\in\H^1(\Om)$, $v\in\H^2(\Om)$  the inequalities
\begin{align*}
&\|u\|_{L_2(\Om^\d)}\leqslant C\d \big(|\ln\d|^{1/2}+1\big)
\|u\|_{\H^1(\Om)}, 
\\
&\|v\|_{L_2(\g_\e^\d)}\leqslant C\d^{1/2}\|v\|_{\H^2(\Om)}, 
\end{align*}
hold true, where the constants $C$ are independent of $\e$, $\d$, $u$, and $v$.
\end{lemma}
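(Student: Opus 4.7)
The plan is to prove both estimates by a local-plus-summation scheme. Near each puncture $(\e s_j^\e,0)$ one establishes a uniform local bound on a half-disk around the puncture, and then one sums over $j\in\mathds{Z}$, using the bounded overlap of the enlarged half-disks. The latter is guaranteed by the uniform $\Odr(1)$-spacing of the $s_j^\e$ together with the bi-Lipschitz property of $\vt_\e$ from \ref{asA1}, \ref{asA2}.

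For the first estimate the local ingredient to establish is the classical $2$D bound
\begin{equation*}
\|u\|_{L_2(B_\d^+)} \leqslant C\,\d\,\bigl(|\ln\d|^{1/2}+1\bigr)\,\|u\|_{\H^1(B_1^+)},\qquad \d\in(0,1/2),
\end{equation*}
where $B_r^+$ denotes the upper half-disk of radius $r$. I would derive this from the Trudinger-type embedding $\H^1(B_1^+)\hookrightarrow L_p(B_1^+)$ with continuity constant $C\sqrt{p}$ for every $p\in[2,\infty)$, combined with Hölder's inequality $\|u\|_{L_2(B_\d^+)}\leqslant |B_\d^+|^{1/2-1/p}\|u\|_{L_p(B_\d^+)}$; optimizing in $p$ selects $p\sim|\ln\d|$ and yields the $\d\,|\ln\d|^{1/2}$ rate, while the additive $+1$ absorbs the constant for $\d$ bounded away from $0$.

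For the second estimate the local ingredient is simpler: the $2$D embedding $\H^2(B_1^+)\hookrightarrow L_\infty(\overline{B_1^+})$ gives
\begin{equation*}
\|v\|_{L_2(I_\d)}^2 \leqslant 2\d\,\|v\|_{L_\infty(B_1^+)}^2 \leqslant C\,\d\,\|v\|_{\H^2(B_1^+)}^2,
\end{equation*}
where $I_\d\subset\{x_2=0\}$ is the boundary segment of length $2\d$ centered at the puncture.

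The summation step is then routine. Working in the rescaled variable $\xi_1=\vt_\e(x_1)/\e$, by (\ref{1.19}) the punctures lie at the $\pi$-spaced lattice $\xi_1=\pi j$, and by (\ref{1.20})--(\ref{1.21}) the change of variables is bi-Lipschitz with constants independent of $\e$ and $j$. Consequently, half-disks of any fixed radius around these lattice points have bounded overlap with a uniform constant, and summing the local bounds produces, for instance,
\begin{equation*}
\|u\|_{L_2(\Om^\d)}^2 \leqslant \sum_{j\in\mathds{Z}} \|u\|_{L_2(B_\d^{(j),+})}^2 \leqslant C\d^2(|\ln\d|+1)\sum_{j\in\mathds{Z}}\|u\|_{\H^1(B_1^{(j),+})}^2 \leqslant C\d^2(|\ln\d|+1)\|u\|_{\H^1(\Om)}^2,
\end{equation*}
and likewise for $v$. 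The only mildly delicate point is the uniformity of the local constants in $j$ and $\e$; this is secured by \ref{asA1}, since each local half-disk around $(\e s_j^\e,0)$ is quasi-isometric, with universal distortion, to a standard reference half-disk, so the Sobolev and Hölder constants can be taken independent of $j$ and $\e$. This is precisely why the proof parallels the strictly periodic argument of \cite{BBC1} with only cosmetic changes.
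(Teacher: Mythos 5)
Your global scheme — localize near each puncture, apply a uniform local Sobolev bound, and sum with bounded overlap — is the right skeleton, and the first inequality does go through via the Moser--Trudinger embedding $\|u\|_{L_p}\leqslant C\sqrt p\,\|u\|_{\H^1}$ with the $p\sim|\ln\d|$ optimization; the paper itself gives no new argument here, it just cites the periodic-case proof in \cite{BBC1}. There is, however, a genuine gap in the step where you claim the change of variables $\xi_1=\vt_\e(x_1)/\e$ is ``bi-Lipschitz with constants independent of $\e$ and $j$'' and then dispose of the second inequality with ``and likewise for $v$''. By (\ref{3.34}) the Jacobian $D\xi$ has entries of order $\e^{-1}$, so the distortion is $\e^{-1}$, not $\Odr(1)$: the $\xi$-half-disks of radius $1$ around $(\pi j,0)$ correspond in $x$-space to sets $\hat B_j$ of diameter $\Odr(\e)$ around $(\e s_j^\e,0)$, and (\ref{1.20})--(\ref{1.21}) control only the ratio $\vt_\e'(x_1)$ and not the factor $\e^{-1}$ built into the definition of $\xi$.

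This scaling is harmless for the first inequality: returning to $x$-coordinates one finds $\|u\|_{L_2(\Om^\d)}^2\leqslant C\d^2(|\ln\d|+1)\bigl(\|u\|_{L_2(\Om)}^2+\e^2\|\nabla u\|_{L_2(\Om)}^2\bigr)$, and the $\e^2$ only helps. For the second inequality it is fatal as written: the rescaled $L_\infty$-embedding on $\hat B_j$ reads $\|v\|_{L_\infty(\hat B_j)}^2\leqslant C\bigl(\e^{-2}\|v\|_{L_2(\hat B_j)}^2+\|\nabla v\|_{L_2(\hat B_j)}^2+\e^2\|\nabla^2 v\|_{L_2(\hat B_j)}^2\bigr)$, and multiplying by $|\g_\e^\d\cap\hat B_j|\sim\e\d$ and summing over $j$ produces a term $C\d\,\e^{-1}\|v\|_{L_2(\Om')}^2$ with $\Om'=\bigcup_j\hat B_j$, which is not manifestly $\leqslant C\d\|v\|_{\H^2(\Om)}^2$. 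You must supply an extra ingredient to close this; e.g.\ observe that $\Om'\subset\{0<x_2<C\e\}$, and by averaging in $x_2$ one gets $|v(x_1,s)|^2\leqslant C\int_0^\pi(|v|^2+|\p_{x_2}v|^2)\,dr$ and hence $\|v\|_{L_2(\Om')}^2\leqslant C\e\|v\|_{\H^1(\Om)}^2$, cancelling the stray $\e^{-1}$. A cleaner route for the second inequality is to forgo the $\xi$-rescaling entirely: tile $\Om$ by $\e$-independent rectangles $R_k=\{k<x_1<k+2,\ 0<x_2<1\}$, use the uniform embedding $\H^2(R_k)\hookrightarrow L_\infty(\overline{R_k}\cap\G_-)$, and note that $|\g_\e^\d\cap R_k|\leqslant C\d$ by \ref{asA1}, \ref{asA2}; the bounded-overlap sum in $k$ then gives the claim directly.
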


\begin{lemma}\label{lm3.2}
For $\d\in[3\eta^{1/4}/4,\pi/2)$ the estimates
\begin{align}
|W|\leqslant C\e(K+\mu)(|\ln\d|+1), \quad x\in\Om\setminus\Om^\d, \quad
|W|\leqslant C, \quad x\in\Om^\d, \label{3.19}
\end{align}
are valid, where the constants $C$ are independent of $\e$, $\mu$, $\eta$, $\d$, and $x$.
\end{lemma}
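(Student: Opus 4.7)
The plan is to decompose the expression (\ref{3.47}) for $W$ into its three constituents---outer term, boundary layer, and inner expansion---and bound each of them separately on the two regions, using the explicit formulas (\ref{3.37}) for $X$ and (\ref{3.42}) for $Y$ together with the asymptotics (\ref{3.13}), (\ref{3.15}).

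For $x\in\Om\setminus\Om^\d$, the first step is to check that every cut-off $\chi(|\vs^{(j)}|\eta^{3/4})$ vanishes. Assumptions \ref{asA1}, \ref{asA2} and the rescalings connecting $x$, $\xi$, $\vs^{(j)}$ give, via the mean value theorem on $\vt_\e$, the bound $|\vs^{(j)}|\eta^{3/4}\geqslant c\d\eta^{-1/4}/\e$, which exceeds $3/4$ as soon as $\d\geqslant 3\eta^{1/4}/4$ and $\e$ is small. Hence the inner sum disappears and $\prod(1-\chi)\equiv 1$, while the identity $\e\xi_2=\vt_\e'(x_1)x_2$ built into (\ref{3.34}) makes the outer term cancel the $\xi_2$-part of the boundary layer exactly, so (\ref{3.47}) collapses to $W=\e(K+\mu)(1+\e\vp_\e(x_1))X(\xi)$. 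From (\ref{3.37}) one reads $X(\xi)=\tfrac12\ln(\sin^2\xi_1+\sinh^2\xi_2)+\ln 2-\xi_2$, which is $\pi$-periodic in $\xi_1$, decays exponentially as $\xi_2\to+\infty$, and is logarithmic near each lattice point by (\ref{3.13}). Combined with $|\xi-(\pi j,0)|\geqslant c\d/\e$ and a short case split (if $c\d/\e\geqslant 1$ then $|X|\leqslant C$; if $c\d/\e<1$ then $|X|\leqslant \ln(\e/(c\d))+C=|\ln\d|-|\ln\e|-\ln c+C\leqslant|\ln\d|+C$, the $|\ln\e|$ entering with a negative sign), one obtains $|X(\xi)|\leqslant C(|\ln\d|+1)$, and the first inequality in (\ref{3.19}) follows after invoking the uniform boundedness of $\vp_\e$ (which is a direct consequence of (\ref{3.50}), \ref{asA1}, \ref{asA2}, and (\ref{1.11})).

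For $x\in\Om^\d$ I would bound the three constituents of (\ref{3.47}) individually by $O(1)$. The outer term is at most $C(K+\mu)\pi\leqslant C$ since $\mu\to+0$ by (\ref{1.11}) and $K$ is fixed. For the boundary layer, $X(\xi)+\xi_2=\tfrac12\ln(\sin^2\xi_1+\sinh^2\xi_2)+\ln 2$; on the support of $\prod(1-\chi)\neq 0$ one has $|\xi-(\pi j,0)|\geqslant \eta^{1/4}/4$, which bounds the argument of the logarithm below by $c\eta^{1/2}$ and above by $\cosh^2\xi_2$, so $|X(\xi)+\xi_2|\leqslant C(|\ln\eta|+\xi_2+1)$. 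Since $\xi_2\leqslant C/\e$ in $\Om^\d$, the boundary-layer contribution is $O\bigl((K+\mu)(\e|\ln\eta|+1)\bigr)=O(1)$ by hypothesis (\ref{1.5}). For the inner term, the support constraint $|\vs^{(j)}|\leqslant 3/(4\eta^{3/4})$ together with the asymptotics (\ref{3.15}) of $Y$ and the uniform boundedness of $\a_j^\pm(\e)$, $d_j(\e)$, and $\vp_\e$ gives $|\e Y^{(j)}|\leqslant C\e(K+\mu)(|\ln\eta|+1)=O(1)$, while $|\chi_j|\leqslant 1$. Summing up, $|W|\leqslant C$ throughout $\Om^\d$.

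Consistent with the authors' remark that the argument mirrors the periodic case of \cite{BBC1}, the main obstacle is not analytic but combinatorial: one must verify that the non-periodic quantities $\a_j^\pm(\e)$ introduced in (\ref{3.40}), $d_j(\e)$ defined just above (\ref{3.49}), and $\vp_\e(x_1)$ in (\ref{3.50}) are all uniformly bounded in $j$, $\e$, and $x_1$. These bounds follow from \ref{asA1}, \ref{asA2}, the mean value theorem applied to $\vt_\e$, and the smallness of $\e(K+\mu)$ coming from (\ref{1.5}) and (\ref{1.11}); once this bookkeeping is in place, the pointwise estimates of the periodic case transfer to our non-periodic setting without essential change.
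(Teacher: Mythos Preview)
The paper gives no proof of this lemma, only a pointer to the periodic case in \cite{BBC1}; your direct pointwise estimation of each summand in (\ref{3.47}) is the natural route and is essentially correct.

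There is one slip to repair in the first part. The set $\Om^\d$ is defined by the condition $|\xi-(\pi j,0)|<\d$ in the \emph{rescaled} variable $\xi$ (the ``$s_j^\e$'' in the paper's definition is evidently a misprint for $\pi j$, consistent with (\ref{1.19}) and the definition of $\Om_{\e,j}$). Hence on $\Om\setminus\Om^\d$ one has directly $|\xi-(\pi j,0)|\geqslant\d$ and therefore $|\vs^{(j)}|\eta^{3/4}=|\xi-(\pi j,0)|\,\eta^{-1/4}\geqslant\d\,\eta^{-1/4}\geqslant 3/4$, with no factor $\e^{-1}$ and no appeal to the mean value theorem on $\vt_\e$. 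Your stated intermediate bounds $|\vs^{(j)}|\eta^{3/4}\geqslant c\d\eta^{-1/4}/\e$ and $|\xi-(\pi j,0)|\geqslant c\d/\e$ are actually false (take $\d=3\eta^{1/4}/4$ and a point with $|\xi-(\pi j,0)|=\d$). Fortunately you only use them to reach the conclusions that the cut-offs vanish and that $|X(\xi)|\leqslant C(|\ln\d|+1)$, both of which follow from the correct bound $|\xi-(\pi j,0)|\geqslant\d$; so the argument stands once the spurious $\e^{-1}$ is deleted and the accompanying case split is rewritten in terms of $\d$ alone. A minor simplification in the second part: in $\Om^\d$ one has $\xi_2<\d<\pi/2$, so the coarse estimate $\xi_2\leqslant C/\e$ is not needed.
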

Due to the non-periodic structure of the alternation of boundary conditions, we need to prove the next two lemmas. Denote
\begin{equation*}
\Om_{\e,j}:=\left\{x: |\xi_1 -\pi j|<\frac{\pi}{2}\right\}.
\end{equation*}

\begin{lemma}\label{lm3.4}
The estimate
\begin{equation*}
\|\D W\|_{L_2(\Om_{\e,j})}\leqslant C\e(K+\mu)
\end{equation*}
holds true, where the constant $C$ is independent of $\e$  and $j$.
\end{lemma}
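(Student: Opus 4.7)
My plan is to exploit the decomposition (\ref{3.47}) of $W$ into outer, boundary-layer, and inner pieces, and to partition $\Om_{\e,j}$ into the bulk region $\mathcal{B}$ where every $\chi(|\vs^{(k)}|\eta^{3/4})$ vanishes, the inner regions $\mathcal{I}_k$ where a single such cut-off equals $1$, and the matching annuli $\mathcal{M}_k$ where some cut-off transitions. Only finitely many $k$, namely those with $(\e s^\e_k,0)\in\overline{\Om_{\e,j}}$, contribute; each $\mathcal{I}_k$ and $\mathcal{M}_k$ has $x$-area $\Odr(\e^2\eta^{1/2})$. On $\mathcal{B}$ the inner pieces drop out, the product $\prod_k(1-\chi(|\vs^{(k)}|\eta^{3/4}))$ equals $1$, and crucially the linear-in-$x_2$ part of the outer term cancels exactly the $\xi_2$-piece of the boundary layer, leaving the simple expression
$$W\big|_{\mathcal{B}}=\e(K+\mu)(1+\e\vp_\e(x_1))X(\xi).$$

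Applying $\Delta_x$ on $\mathcal{B}$ and using $\Delta_\xi X=0$, the chain rule for $\xi(x)$ leaves only lower-order residuals of the form $(\vt_\e''/\e)X_{\xi_1}$, $(\vt_\e'''x_2/\e)X_{\xi_2}$, $(x_2\vt_\e'\vt_\e''/\e^2)X_{\xi_1\xi_2}$, $(x_2^2(\vt_\e'')^2/\e^2)X_{\xi_2\xi_2}$, augmented by $\e\vp_\e'X_{x_1}$ and $\e\vp_\e''X$ from differentiation of $1+\e\vp_\e$. I would change variables $x\mapsto\xi$ (Jacobian $\e^2/(\vt_\e')^2$) and bound each term in $L_2$ using the exponential decay of $X$ and its derivatives in $\xi_2$, the cut-off $|\xi-(\pi k,0)|\geqslant C\eta^{1/4}$ excluding the logarithmic singularities of $X$, the bounds (\ref{1.20})--(\ref{1.21}) on $\vt_\e^{(k)}$, and the fact that $\vp_\e',\vp_\e''$ are supported on sets of $x_1$-measure $\Odr(\e)$ with pointwise sizes $\Odr((K+\mu)/\e)$ and $\Odr((K+\mu)/\e^2)$ respectively. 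The resulting $|\ln\eta|^{1/2}$-factors that appear in naive estimates are absorbed using (\ref{1.5}), which gives $\e(K+\mu)|\ln\eta|=\Odr(1)$, yielding $\|\Delta W\|_{L_2(\mathcal{B})}\leqslant C\e(K+\mu)$.

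On each $\mathcal{I}_k$ the boundary-layer piece vanishes and $W=-(K+\mu)(1+\e\vp_\e)\vt_\e'x_2-1+\e Y^{(k)}(\vs^{(k)})$; here $\vp_\e$ is constant, so the linear-in-$x_2$ term has bounded Laplacian, and $\Delta_{\vs^{(k)}}Y^{(k)}=0$ reduces $\Delta_x Y^{(k)}$ to chain-rule residuals controlled by scaled $\vs$-derivatives of $Y^{(k)}$. Combined with $|\mathcal{I}_k|_x=\Odr(\e^2\eta^{1/2})$, the local $L_2$ norm is $\Odr(\e(K+\mu)\eta^{1/4})$. On each $\mathcal{M}_k$, derivatives of $\chi_k$ act on the matching discrepancy $\e(K+\mu)(1+\e\vp_\e)(X+\xi_2)-(-1+\e Y^{(k)})$; combining (\ref{3.39}) and (\ref{3.45}) with the identity $\e(K+\mu)\ln\eta=-1$ (immediate from (\ref{1.11})) shows this discrepancy is $\Odr(\e(K+\mu)(\eta^{1/2}+|\vs^{(k)}|^{-1}))$ in the overlap $|\vs^{(k)}|\sim\eta^{-3/4}$; accounting for $|\nabla\chi_k|=\Odr(1/(\e\eta^{1/4}))$, $|\nabla^2\chi_k|=\Odr(1/(\e^2\eta^{1/2}))$, and $|\mathcal{M}_k|_x=\Odr(\e^2\eta^{1/2})$, the resulting $L_2$ contribution is again $\Odr(\e(K+\mu))$. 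Summing the three regional bounds proves the claim.

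The main obstacle is the bulk estimate: the chain-rule residuals from $\Delta_x X$ produce integrals that inherit the logarithmic singularities of $X_\xi$ near the cut-off corners, so that naive estimation generates a factor $|\ln\eta|^{1/2}$ which can only be absorbed into the $\e$-prefactor by invoking the sharp balance (\ref{1.5}) between $\e$ and $|\ln\eta|$. A secondary subtlety is that $\vp_\e''$ is of order $(K+\mu)/\e^2$ on the transition strips of $g_\e$; this is tolerable precisely because those strips have $x_1$-measure $\Odr(\e)$ and lie in the interior of the cell where $X$ is bounded, so that careful accounting collapses the $\e^2\vp_\e''X$ contribution into the target bound $\Odr(\e(K+\mu))$.
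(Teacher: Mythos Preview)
Your regional decomposition into bulk $\mathcal{B}$, inner $\mathcal{I}_k$, and matching annuli $\mathcal{M}_k$ matches the paper's strategy, and your observation that the outer term and the $\xi_2$-piece cancel exactly on $\mathcal{B}$, leaving $W|_{\mathcal{B}}=\e(K+\mu)(1+\e\vp_\e)X(\xi)$, is correct and in fact cleaner than the paper's bookkeeping.

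However, your bulk estimate contains a genuine gap. The chain-rule residual $(\vt_\e''/\e)X_{\xi_1}$ that you list, after multiplication by the prefactor $\e(K+\mu)(1+\e\vp_\e)$, contributes the term $(K+\mu)(1+\e\vp_\e)\vt_\e''\,X_{\xi_1}$ to $\Delta W$ on $\mathcal{B}$. Since $X_{\xi_1}\sim r^{-1}$ near each point $(\pi k,0)$, one has
\[
\|X_{\xi_1}\|_{L_2\big(\{|\xi-(\pi j,0)|>c\,\eta^{1/4}\}\cap\Pi_j\big)}^2 \ \sim\ |\ln\eta|,
\]
so that, after the change of variables $x\mapsto\xi$ with Jacobian $\sim\e^2$,
\[
\big\|(K+\mu)\vt_\e''\,X_{\xi_1}\big\|_{L_2(\mathcal{B}\cap\Om_{\e,j})}\ \sim\ (K+\mu)\,\e\,|\ln\eta|^{1/2}.
\]
Your claimed absorption via $\e(K+\mu)|\ln\eta|=1$ then yields
\[
(K+\mu)\,\e\,|\ln\eta|^{1/2}=\sqrt{\e(K+\mu)}\cdot\sqrt{\e(K+\mu)|\ln\eta|}=\sqrt{\e(K+\mu)},
\]
which is \emph{larger} than the target $\e(K+\mu)$, not smaller. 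The companion residual $(x_2\vt_\e'\vt_\e''/\e^2)X_{\xi_1\xi_2}$ behaves the same way. Hence the argument as written does not deliver the stated bound on $\mathcal{B}$.

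It is worth noting that the paper's own displayed formula for $\Delta W$ simply omits these chain-rule residuals, in effect treating $\Delta_x X(\xi)=0$ because $\Delta_\xi X=0$; this is legitimate only when $\vt_\e''\equiv 0$, i.e.\ in the periodic case. So the paper's proof shares the same defect you have uncovered. To recover the lemma in the genuinely non-periodic setting one would need either an additional smallness hypothesis on $\vt_\e''$, or a finer device that avoids the raw $L_2$ bound on $X_{\xi_1}$ near the excised disks --- for example an integration by parts transferring $\partial_{\xi_1}$ onto a factor supported in $\{|\xi_1-\pi j|>\pi/4\}$, analogous to how the $\vp_\e'$- and $\vp_\e''$-terms are localized.
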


\begin{proof}
Throughout the proof by $C$ we denote inessential constants independent of $\e$ and $j$. We first calculate $\D W$ employing the equations in (\ref{3.36}), (\ref{3.40}) for $X$ and $Y^{(j)}$,
\begin{align*}
&\D W=-(K+\mu) x_2 \big((1+\e\vp_\e)\vt_\e'\big)''
\\
&\hphantom{\D W=}+ \e(K+\mu) \left(2\vp_\e'\vt_\e' \frac{\p X}{\p\xi_2}+\e \vp''_\e (X+\xi_2)\right) \prod\limits_{j=-\infty}^{+\infty} \big(1-\chi(|\vs^{(j)}|\eta^{3/4})\big)
\\
&\hphantom{\D W=}+2 \sum\limits_{j=-\infty}^{+\infty} \nabla \chi(|\vs^{(j)}|\eta^{3/4})\cdot\nabla  W_j^{\mathrm{mat}}+ \sum\limits_{j=-\infty}^{+\infty} W_j^{\mathrm{mat}}\D \chi(|\vs^{(j)}|\eta^{3/4}),
\\
&W_j^{\mathrm{mat}}(x,\e,\mu)=-1+\e Y^{(j)}(\vs^{(j)},\e)-\e (K+\mu) \big(1+\e\vp_\e(x_1)\big) (X(\xi)+\xi_2),
\end{align*}
where the arguments of $Y$ are $\Big((\vs^{(j)}_1+\a_j^-(\e)-\a_j^+(\e)
)d_j^{-1}(\e),  \vs^{(j)}_2 d_j^{-1}(\e)\Big)$.

For $x\in\Om^{\frac{3}{4}\eta^{1/4}}$ the most part of the terms in $\D W$ vanishes and we have
\begin{equation*}
\D W=\e(K+\mu) \left(2\vp_\e'\vt_\e' \frac{\p X}{\p\xi_2}+\e \vp''_\e  X\right),\quad x\in\Om^\d.
\end{equation*}
It follows from (\ref{1.20}), (\ref{1.21}) that
\begin{equation}\label{3.51}
|\vp_\e'|\leqslant C\e^{-1},\quad |\vp_\e''|\leqslant C\e^{-2},
\end{equation}
where the constant $C$ is independent of $\e$ and $x_1$. Moreover, since by (\ref{3.49}), (\ref{3.50}) the function $g_\e(\vt_\e)$ 
is constant as $|\xi_1-\pi j|<\pi/4$,
the derivatives of $\vp_\e$ vanish for such values of $x_1$. Thus, for some small fixed $\d\in(0,\pi/2)$,
\begin{equation}\label{3.49a}
\begin{aligned}
\|\D W\|_{L_2\big(\Om_{\e,j}\cap \Om^{\frac{3}{4}\eta^{1/4}}\big)}^2&\leqslant C\e^2(K+\mu)^2 \|X\|_{\H^1(\{\xi: \xi_2>0,\ \d<|\xi_1|<\pi/2\})}
\\
&\leqslant C\e^2(K+\mu)^2,
\end{aligned}
\end{equation}
where the constant $C$ is independent of $\e$ and $j$.

As $x\in \Om_{\e,j}\setminus \Om^{\frac{1}{4}\eta^{1/4}}$, the functions $\chi(|\z^{(j)}|\eta^{3/4})$ are identically one and thus
\begin{equation*}
\D W=-(K+\mu)x_2 \big((1+\e\vp_\e)\vt_\e'\big)'',
\quad x\in \Om_{\e,j}\setminus \Om^{\frac{1}{4}\eta^{1/4}}. 
\end{equation*}
Together with (\ref{3.51}), (\ref{1.21}) it implies
\begin{equation}\label{3.52}
\|\D W\|_{L_2\left(\left\{x: |\z^{(j)}|<\frac{1}{4}\eta^{1/4}\right\}\right)} \leqslant C(K+\mu)\eta^{1/2}.
\end{equation}
It remains to obtain the estimate in the intermediate region $\Om^{\frac{3}{4}\eta^{1/4}}\setminus\Om^{\frac{1}{4}\eta^{1/4}}$. Here by the matching performed above and, in particular, by (\ref{3.35}), (\ref{3.39}), (\ref{3.46}), (\ref{3.50}), (\ref{3.47}), the estimates
\begin{align*}
&W_j^{\mathrm{mat}}(x,\e)=\Odr\Big(\e(K+\mu)\big(1+\e\vp_\e(x_1)\big) (|\xi|^2+|\vs|^{-1})\Big),
\\
&\nabla W_j^{\mathrm{mat}}(x,\e)=\Odr\big((K+\mu)(|\xi|+\eta^{-1}|\vs|^{-2})\big) =\Odr\big((K+\mu)\eta^{1/4}
\big)
\end{align*}
are valid. It implies
\begin{equation*}
\big\|2  \nabla \chi(|\vs^{(j)}|\eta^{3/4})\cdot\nabla  W_j^{\mathrm{mat}} +W_j^{\mathrm{mat}}\D \chi(|\vs^{(j)}|\eta^{3/4})\big\|_{L_2\left(\Om^{\frac{3}{4}\eta^{1-\b}} \setminus\Om^{\frac{1}{4}\eta^{1-\b}}\right)}\leqslant C(K+\mu)^2 \eta^{1/2},
\end{equation*}
while the other terms in $\D W$ can be estimated in the same way as in (\ref{3.49a}), (\ref{3.52}). The last estimate, (\ref{3.49a}), and (\ref{3.52}) imply the desired estimate.
\end{proof}

\begin{lemma}\label{lm3.6}
Each function $u\in\H^2(\Om)$ belongs to $C(\overline{\Om})$ and satisfies the estimate
\begin{equation*}
\sum\limits_{j\in\mathds{Z}} \|u\|_{C(\overline{\Om}_{\e,j})}^2 \leqslant C\e^{-1}\|u\|_{\H^2(\Om)}^2,
\end{equation*}
where the constant $C$ is independent of $\e$ and $u$.
\end{lemma}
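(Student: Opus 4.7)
The strategy is to tile $\Om$ by the cells $\Om_{\e,j}$, which by \ref{asA1} have $x_1$-width of order $\e$ and $x_2$-width $\pi$; to rescale each of them onto a fixed two-dimensional Lipschitz rectangle via $\xi_1=\vt_\e(x_1)/\e$, $\xi_2=x_2$; to apply the embedding $\H^2\hookrightarrow C$ on the reference cell with a constant uniform in $j$ and $\e$; and finally to sum the resulting local estimates.

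Concretely, the map $x\mapsto (\vt_\e(x_1)/\e,x_2)$ sends $\Om_{\e,j}$ bijectively onto $\t\Om_j:=\{(\xi_1,x_2):|\xi_1-\pi j|<\pi/2,\,0<x_2<\pi\}$, which is a translate of the fixed rectangle $\t\Om_0:=\{|\xi_1|<\pi/2,\,0<x_2<\pi\}$. Setting $\t u_j(\xi_1,x_2):=u(x_1(\xi_1),x_2)$, I would invoke the two-dimensional Sobolev embedding on $\t\Om_0$, which yields $\|\t u_j\|_{C(\overline{\t\Om_j})}\leqslant C\|\t u_j\|_{\H^2(\t\Om_j)}$ with the same constant $C$ for every $j$, since all $\t\Om_j$ are congruent. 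This already implies $u\in C(\overline\Om)$, because the continuous representatives on adjacent cells share the same $\H^2$-trace on the common boundary $\xi_1=\pi(j+\tfrac12)$ and hence glue into a single continuous function on $\overline\Om$.

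Next, I would push the $\H^2$-norm back to the original coordinates. Using $\p_{x_1}=(\vt_\e'/\e)\p_{\xi_1}$, the Jacobian relation $\di\xi_1\,\di x_2=(\vt_\e'/\e)\,\di x_1\,\di x_2$, and the uniform bounds (\ref{1.20}), (\ref{1.21}), a direct computation yields the pointwise identities $\p_{\xi_1}\t u_j=(\e/\vt_\e')\p_{x_1} u$ and $\p_{\xi_1}^2\t u_j=(\e/\vt_\e')^2\p_{x_1}^2 u-\e^2\vt_\e''\vt_\e'^{-3}\p_{x_1} u$. Integrating these over $\t\Om_j$ converts each rescaled norm into a bound on $\Om_{\e,j}$: the $L_2$- and $\p_{x_2}$-, $\p_{x_2}^2$-terms each pick up the Jacobian factor $\e^{-1}$, while the $\p_{\xi_1}$-, $\p_{\xi_1}\p_{x_2}$-, and $\p_{\xi_1}^2$-terms carry powers $\e$, $\e$, and $\e^3$ respectively. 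Since $\e^k\leqslant \e^{-1}$ for $k\geqslant -1$ and small $\e$, this delivers
\[
\|\t u_j\|_{\H^2(\t\Om_j)}^2\leqslant C\e^{-1}\|u\|_{\H^2(\Om_{\e,j})}^2.
\]

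Summing over $j\in\mathds{Z}$ and using that the cells $\Om_{\e,j}$ cover $\Om$ with pairwise disjoint interiors gives
\[
\sum_{j\in\mathds{Z}}\|u\|_{C(\overline{\Om}_{\e,j})}^2\leqslant C\e^{-1}\|u\|_{\H^2(\Om)}^2,
\]
which is the claimed estimate. The only genuinely delicate point is the uniformity of the Sobolev constant in $j$ and $\e$; it is ensured precisely by the fact that every rescaled cell is an exact translate of the single reference rectangle $\t\Om_0$. The most tedious part of the bookkeeping is the $\vt_\e''$-correction that appears when the second $x_1$-derivative is pushed through the change of variables, but it is absorbed directly by \ref{asA1}.
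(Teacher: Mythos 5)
Your proposal is correct and follows essentially the same route as the paper: rescale the cells $\Om_{\e,j}$ to congruent reference rectangles via $\xi_1=\vt_\e(x_1)/\e$, invoke a uniform-in-$j$ Sobolev embedding $\H^2\hookrightarrow C$ on the reference cell, track the Jacobian and $\vt_\e$-derivative factors (which produce exactly the $\e^{-1}$), and sum over $j$. The only technical difference is that the paper applies the embedding to a fixed infinitely smooth domain $Q$ sandwiched between $\Om_j$ and the union of the three neighboring cells (thereby tripling each cell's contribution, absorbed into $C$), whereas you apply it directly on the Lipschitz rectangle; both are valid and yours avoids the overcount. The initial claim $u\in C(\overline\Om)$ is also obtained in the paper simply by quoting the global smoothness-improving theorem, whereas you derive it by gluing continuous cell-wise representatives across common boundaries — a bit more elaborate but still fine.
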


\begin{proof}
By the standard smoothness improving theorems (see, for instance, \cite[Ch. I\!I, Sec. 6, Th. 3]{Mi}) each function $u\in\H^2(\Om)$ belongs to $C(\overline{\Om})$. Moreover, we let \begin{equation*}
\Om_j:=\left\{(\xi_1,x_2): |\xi_1-\pi j|<\frac{\pi}{2},\ 0<x_2<\frac{\pi}{2}\right\},\quad \widetilde{u}(\xi_1,x_2):=u(x_1\e^{-1},x_2).
\end{equation*}
Again by \cite[Ch. I\!I, Sec. 6, Th. 3]{Mi} there exists a constant $C$ independent of $\e$, $j$, and $u$ such that
\begin{equation*}
\|\widetilde{u}\|_{C(\overline{\Om}_j)}^2\leqslant C\|\widetilde{u}\|_{\H^2(Q)}\leqslant C\Big(\|\widetilde{u}\|_{\H^2(\Om_{j-1})}^2+
\|\widetilde{u}\|_{\H^2(\Om_j)}^2+
\|\widetilde{u}\|_{\H^2(\Om_{j+1})}^2\Big),
\end{equation*}
where $Q$ is a domain with infinitely differentiable boundary satisfying
\begin{equation*}
\Om_{j}\subset Q \subset \overline{\Om_{j-1}\cup\Om_j\cup\Om_{j+1}}.
\end{equation*}
Since
\begin{equation*}
\|\widetilde{u}\|_{\H^2(\Om_j)}^2 \leqslant \e^{-1} \|u\|_{\H^2(\Om_{\e,j})}^2,
\end{equation*}
we get
\begin{equation*}
\|u\|_{C^2(\overline{\Om}_{\e,j})}^2\leqslant C\e^{-1}\Big(\|\widetilde{u}\|_{\H^2(\Om_{\e,j-1})}^2+
\|\widetilde{u}\|_{\H^2(\Om_{\e,j})}^2+
\|\widetilde{u}\|_{\H^2(\Om_{\e,j+1})}^2\Big).
\end{equation*}
We sum this estimate over $j\in\mathds{Z}$ and it completes the proof.
\end{proof}

Employing Lemmas~\ref{lm3.3},~\ref{lm3.2} and proceeding as in \cite[Eqs. (3.26)-(3.32)]{BBC1}, we obtain
\begin{align}
&\|u^{(\mu)}\|_{\H^2(\Om)}\leqslant C\|f\|_{L_2(\Om)},\label{3.22}
\\
&|(f,Wv_\e)_{L_2(\Om)}|\leqslant C\e (K+\mu) |\ln\e(K+\mu)| \|f\|_{L_2(\Om)}\|v_\e\|_{\H^1(\Om)},\label{3.21}
\\
&\|u^{(\mu)}W\|_{L_2(\Om)}+\|W\nabla u^{(\mu)}\|_{L_2(\Om)} \leqslant C\e(K+\mu) |\ln\e(K+\mu)|\|f\|_{L_2(\Om)},\label{3.23}
\\
&
\begin{aligned}
\big|(u^{(\mu)}W,v_\e)_{L_2(\Om)}+&(W(\nabla+\iu A) u^{(\mu)},(\nabla+\iu A) v_\e)_{L_2(\Om)}\big|
\\
&\leqslant C\e(K+\mu) |\ln\e(K+\mu)| \|f\|_{L_2(\Om)} \|v_\e\|_{\H^1(\Om)}.
\end{aligned}
\label{3.25}
\end{align}
The inequalities (\ref{3.22}), (\ref{3.21}), (\ref{3.23}) also imply
\begin{align}
&\|W v_\e\|_{L_2(\Om)}\leqslant C\e(K+\mu) |\ln\e(K+\mu)| \|f\|_{L_2(\Om)}\|v_\e\|_{\H^1(\Om)},\label{3.26}
\\
\label{3.27}
&\big|(Vu^{(\mu)},W v_\e)_{L_2(\Om)}\big|\leqslant C\e(K+\mu) |\ln\e(K+\mu)| \|f\|_{L_2(\Om)} \|v_\e\|_{L_2(\Om)}.
\end{align}

To estimate the term $\|u^{(\mu)}\D W\|_{L_2(\Om)}$ we employ Lemmas~\ref{lm3.4},~\ref{lm3.6} and (\ref{3.22}),
\begin{equation}\label{3.28}
\begin{aligned}
\|u^{(\mu)}\D W\|_{L_2(\Om)}=&\left(\sum\limits_{j\in\mathds{Z}} \|u^{(\mu)}\D W\|_{L_2(\Om_{\e,j})}^2\right)^{1/2}
\\
&\leqslant C\e(K+\mu) \left(\sum\limits_{j\in\mathds{Z}} \|u^{(\mu)}\|_{C(\overline{\Om}_{\e,j})}^2
\right)^{1/2}
\\
&\leqslant C\e^{1/2}(K+\mu) \|u^{(\mu)}\|_{\H^2(\Om)} \leqslant C\e^{1/2}(K+\mu) \|f\|_{L_2(\Om)}.
\end{aligned}
\end{equation}
We employ   Lemmas~\ref{lm3.3},~\ref{lm3.2} with $\d=\e^2(K+\mu)^2$, the embedding of $\H^1(\Om)$ into $L_2(\G_-)$ and obtain
\begin{equation}\label{3.32}
\begin{aligned}
\|u^{(\mu)}W\|_{L_2(\G_\e)}^2=&\|u^{(\mu)}W\|_{L_2\left(\G_\e\setminus \g_\e^{\e^2(K+\mu)^2}\right)}^2 +\|u^{(\mu)}W\|_{L_2\left(\g_\e^{\e^2(K+\mu)^2}\right)}^2
\\
\leqslant& C\e^2(K+\mu)^2 |\ln\e(K+\mu)|^2 \|u^{(\mu)}\|_{L_2(\G_\e)}^2
\\
&+ C\e^2(K+\mu)^2 \|u^{(\mu)}\|_{\H^2(\Om)}^2
\\
\leqslant & C\e^2(K+\mu)^2 |\ln\e(K+\mu)|^2 \|f\|_{L_2(\Om)}^2.
\end{aligned}
\end{equation}
Hence, by \ref{asA1}, (\ref{3.50}), and the boundedness of $b$, we have
\begin{align*}
\Big|\Big(&\big(2b+(K+\mu)(1+\e\vp_\e')\vt_\e'\big) u^{(\mu)}, W v_\e\Big)_{L_2(\G_\e)}
\Big|\leqslant \|u^{(\mu)}W\|_{L_2(\G_\e)} \|v_\e\|_{L_2(\G_-)} \\
&\leqslant C\|u^{(\mu)}W\|_{L_2(\G_\e)} \|v_\e\|_{\H^1(\Om)}
\leqslant C\e(K+\mu) |\ln\e(K+\mu)| \|f\|_{L_2(\Om)} \|v_\e\|_{\H^1(\Om)}.
\end{align*}
We substitute the last estimate and (\ref{3.21}), (\ref{3.23}), (\ref{3.25}), (\ref{3.26}), (\ref{3.27}), (\ref{3.28}) into (\ref{3.3}). Then we take the real and imaginary parts of the obtained estimate and we use the obvious inequality
\begin{equation}\label{3.53}
\|\nabla v\|_{L_2(\Om)}^2\leqslant C\big(\|(\nabla +\iu A)v\|_{L_2(\Om)}^2+\|v\|_{L_2(\Om)}^2\big)
\end{equation}
which is  valid for any $v\in\H^1(\Om)$. As the result we get
\begin{align*}
&\|v_\e\|_{L_2(\Om)}^2\leqslant C\e(K+\mu)|\ln\e(K+\mu)| \|f\|_{L_2(\Om)}\|v_\e\|_{\H^1(\Om)},
\\
&\|\nabla v_\e\|_{L_2(\Om)}^2\leqslant C\e(K+\mu)|\ln\e(K+\mu)| \|f\|_{L_2(\Om)}\|v_\e\|_{\H^1(\Om)}+C\|\nabla v_\e\|_{L_2(\Om)}^2.
\end{align*}
Thus,
\begin{equation*}
\|v_\e\|_{\H^1(\Om)}\leqslant  C\e(K+\mu)|\ln\e(K+\mu)| \|f\|_{L_2(\Om)},
\end{equation*}
and it proves (\ref{1.12}).

In order to prove the other estimates we shall make use of one more auxiliary lemma.

\begin{lemma}\label{lm3.5}
The estimate
\begin{equation*}
\|\nabla (u^{(\mu)}W)\|_{L_2(\Om)}\leqslant C(K+\mu)^{1/2}\|f\|_{L_2(\Om)}
\end{equation*}
holds true.
\end{lemma}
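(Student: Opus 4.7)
The plan is to use the product rule $\nabla(u^{(\mu)} W) = W\nabla u^{(\mu)} + u^{(\mu)}\nabla W$ and handle the two summands separately. The first summand is already under control: by (\ref{3.23}), $\|W\nabla u^{(\mu)}\|_{L_2(\Om)} \leqslant C\e(K+\mu)|\ln\e(K+\mu)|\,\|f\|_{L_2(\Om)}$, and this is dominated by $C(K+\mu)^{1/2}\|f\|_{L_2(\Om)}$ in the regime (\ref{1.5}). So the real task is the estimate of $\|u^{(\mu)}\nabla W\|_{L_2(\Om)}$.

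The natural split is $W = W_{\mathrm{out}} + W_1$, where $W_{\mathrm{out}}(x,\e,\mu) := -(K+\mu)(1+\e\vp_\e)\vt'_\e x_2$ is the outer part from (\ref{3.47}) and $W_1 := W - W_{\mathrm{out}}$ gathers the boundary layer and inner expansion. Using (\ref{1.20}), (\ref{1.21}), the boundedness of $\vp_\e$ from (\ref{3.50}), and (\ref{3.51}), one checks pointwise $|\nabla W_{\mathrm{out}}(x)| \leqslant C(K+\mu)$, which combined with $\|u^{(\mu)}\|_{L_2(\Om)}\leqslant \|f\|_{L_2(\Om)}$ gives
\[
\|u^{(\mu)}\nabla W_{\mathrm{out}}\|_{L_2(\Om)} \leqslant C(K+\mu)\|f\|_{L_2(\Om)} \leqslant C(K+\mu)^{1/2}\|f\|_{L_2(\Om)},
\]
the last inequality using that $K+\mu$ is uniformly bounded as $\mu\to +0$. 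For $W_1$, which is essentially supported in an $\Odr(\e)$-neighborhood of $\G_-$, I would work cell by cell along the partition $\{\Om_{\e,j}\}_{j\in\mathds{Z}}$ from Lemma~\ref{lm3.6}. Rescaling to $\xi$ for the boundary layer and to $\vs^{(j)}$ for the inner expansion, and exploiting the explicit formulas (\ref{3.37}), (\ref{3.42}) for $X$ and $Y$, one obtains the uniform-in-$j$ estimate
\[
\|\nabla W_1\|_{L_2(\Om_{\e,j})}^2 \leqslant C\e^2(K+\mu)^2|\ln\eta|,
\]
the $|\ln\eta|$ arising from integrating $|\nabla_\xi X|^2 \sim |\xi-(\pi j,0)|^{-2}$ outside a $\xi$-ball of radius $\sim \eta^{1/4}$ and $|\nabla_\vs Y|^2\sim |\vs|^{-2}$ inside a $\vs$-ball of radius $\sim \eta^{-3/4}$, these being the scales dictated by the cutoff $\chi(|\vs^{(j)}|\eta^{3/4})$.

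Summing this cell-by-cell estimate via Lemma~\ref{lm3.6} together with (\ref{3.22}) yields
\[
\|u^{(\mu)}\nabla W_1\|_{L_2(\Om)}^2 \leqslant \sum_{j\in\mathds{Z}} \|u^{(\mu)}\|_{C(\overline{\Om}_{\e,j})}^2 \|\nabla W_1\|_{L_2(\Om_{\e,j})}^2 \leqslant C\e^{-1}\|f\|_{L_2(\Om)}^2 \cdot \e^2(K+\mu)^2|\ln\eta|.
\]
The finishing touch is the calibrated identity $\e|\ln\eta(\e)|(K+\mu(\e))=1$ built directly into the definition (\ref{1.11}) of $\mu$: it turns $\e(K+\mu)^2|\ln\eta|$ into exactly $K+\mu$, so that $\|u^{(\mu)}\nabla W_1\|_{L_2(\Om)} \leqslant C(K+\mu)^{1/2}\|f\|_{L_2(\Om)}$, and the Lemma follows. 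The main obstacle is the per-cell estimate of $\nabla W_1$: although the explicit formulas for $X$ and $Y$ make each piece tractable, one must carefully treat the transition region $\tfrac14\leqslant|\vs^{(j)}|\eta^{3/4}\leqslant\tfrac34$, where $\nabla\chi$ has size $\Odr((\e\eta^{1/4})^{-1})$ and must be compensated by the smallness of the matching residue $-1+\e Y^{(j)} - \e(K+\mu)(1+\e\vp_\e)(X+\xi_2)$ guaranteed by construction through the choice of $\vp_\e$ in (\ref{3.50}) enforcing the matching (\ref{3.46}). Once this cancellation is extracted, the scaling relation $\e|\ln\eta|(K+\mu) = 1$ precisely converts the apparently suboptimal bound of order $\e^{1/2}(K+\mu)|\ln\eta|^{1/2}$ into the sharp $(K+\mu)^{1/2}$.
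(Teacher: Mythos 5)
Your approach is correct, but it is genuinely different from the one in the paper, and the difference is worth spelling out. The paper never estimates $\nabla W$ directly. Instead it integrates by parts twice in $\|\nabla(u^{(\mu)}W)\|_{L_2(\Om)}^2$, using the explicit boundary conditions~(\ref{1.7}) and~(\ref{3.1}), and arrives at the identity~(\ref{3.31}):
\[
\|\nabla (u^{(\mu)}W)\|_{L_2(\Om)}^2=\int\limits_{\g_\e} |u^{(\mu)}|^2 \frac{\p W}{\p x_2}\di x_1+ (K+\mu) (u^{(\mu)},u^{(\mu)}W)_{L_2(\G_\e)}
-\RE (u^{(\mu)}\D W, u^{(\mu)}W)_{L_2(\Om)}+\|W\nabla u^{(\mu)}\|_{L_2(\Om)}^2.
\]
Every term on the right is then handled by material already in place -- the $\D W$ bound of Lemma~\ref{lm3.4} together with~(\ref{3.28}), the $L_2(\G_\e)$ bound~(\ref{3.32}), and~(\ref{3.23}) -- except the first, for which the paper invokes the analogue of Lemma~3.5 in \cite{BBC1} to get the crucial $C(K+\mu)\|f\|^2_{L_2(\Om)}$ bound. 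So in the paper, the full gradient of $W$ is never needed; only $\D W$, traces of $W$, and traces of $\p W/\p x_2$ enter. Your route is a direct product-rule argument, $\nabla(u^{(\mu)}W)=W\nabla u^{(\mu)}+u^{(\mu)}\nabla W$, combined with the decomposition $W=W_{\mathrm{out}}+W_1$, the cell-wise $\sup$-bound from Lemma~\ref{lm3.6}, and the calibration $\e|\ln\eta(\e)|\,(K+\mu(\e))=1$ coming from~(\ref{1.11}). The scaling you indicate for the per-cell gradient of $W_1$ is right: the boundary-layer contribution scales as $\e^2(K+\mu)^2\int_{|\xi|>\eta^{1/4}}|\nabla_\xi X|^2\di\xi\sim \e^2(K+\mu)^2|\ln\eta|$, the inner contribution as $\e^2(K+\mu)^2\int_{|\vs|<\eta^{-3/4}}|\nabla_\vs Y|^2\di\vs\sim \e^2(K+\mu)^2|\ln\eta|$, and the corner singularity $|\nabla_\vs Y|\sim r^{-1/2}$ near $\vs=(\pm 1,0)$ is square-integrable; the residual $\xi_2$-piece in your $W_1$ gives a bounded gradient of order $(K+\mu)$ and is absorbed since $K+\mu$ is bounded. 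The calibration then converts $\e(K+\mu)^2|\ln\eta|$ into exactly $K+\mu$, matching the paper's conclusion. What your route buys is conceptual transparency -- everything reduces to explicit scaling in $\xi$ and $\vs$ -- but the price is precisely the per-cell gradient estimate for $W_1$ that you flag as the main obstacle: the transition region where $\nabla\chi(|\vs^{(j)}|\eta^{3/4})$ is large must be controlled by the smallness of the matching residue $W_j^{\mathrm{mat}}$, whose decay there relies on the choice of $\vp_\e$ in~(\ref{3.50}). That verification is of the same flavour and difficulty as the one already done in the proof of Lemma~\ref{lm3.4} for $\D W$, so your proposal is sound, but it requires redoing that matching analysis for $\nabla W$ rather than for $\D W$, which is work the paper's integration-by-parts argument deliberately avoids.
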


\begin{proof}
We integrate by parts taking into consideration (\ref{1.7}), and (\ref{3.1})
\begin{align*}
\|&\nabla (u^{(\mu)}W)\|_{L_2(\Om)}^2=-\left( \frac{\p}{\p x_2}
u^{(\mu)} W, u^{(\mu)}W \right)_{L_2(\G_-)}- \left(\D (u^{(\mu)}
W),u^{(\mu)}W\right)_{L_2(\Om)}
\\
=&-\left(u^{(\mu)} \frac{\p W}{\p x_2}
W, u^{(\mu)} \right)_{L_2(\g_\e)}
-\left(u^{(\mu)} \frac{\p W}{\p x_2}, u^{(\mu)}W \right)_{L_2(\G_\e)}
\\
&
-\left(W \frac{\p u^{(\mu)}}{\p x_2}, u^{(\mu)}W \right)_{L_2(\G_-)}
- \left(W\D u^{(\mu)} W, u^{(\mu)}W\right)_{L_2(\Om)}
\\
&-\left(u^{(\mu)}\D W, u^{(\mu)}W\right)_{L_2(\Om)}-2\left(W\nabla u^{(\mu)}, u^{(\mu)}\nabla W\right)_{L_2(\Om)}
\\
=&\int\limits_{\g_\e}
|u^{(\mu)}|^2\frac{\p W}{\p x_2}\di x_1+ (K+\mu)
(u^{(\mu)},u^{(\mu)}W)_{L_2(\G_\e)}
\\
&- \big( (-\iu A_2+ b+ K+\mu
)u^{(\mu)} W, u^{(\mu)} W\big)_{L_2(\G_-)}
- \left(W\D u^{(\mu)} W, u^{(\mu)}W\right)_{L_2(\Om)}
\\
&-\left(u^{(\mu)}\D W, u^{(\mu)}W\right)_{L_2(\Om)}-2\left(W\nabla u^{(\mu)}, u^{(\mu)}\nabla W\right)_{L_2(\Om)}.
\end{align*}
We take the real part of this identity
\begin{align*}
\|&\nabla (u^{(\mu)}W)\|_{L_2(\Om)}^2=-\left( \frac{\p}{\p x_2}
u^{(\mu)} W, u^{(\mu)}W \right)_{L_2(\G_-)}- \left(\D (u^{(\mu)}
W),u^{(\mu)}W\right)_{L_2(\Om)}
\\
=&\int\limits_{\g_\e}
|u^{(\mu)}|^2\frac{\p W}{\p x_2}\di x_1+ (K+\mu)
(u^{(\mu)},u^{(\mu)}W)_{L_2(\G_\e)}
\\
&- \big( (b+ K+\mu
)u^{(\mu)} W, u^{(\mu)} W\big)_{L_2(\G_-)}
- \RE\left(W\D u^{(\mu)} W, u^{(\mu)}W\right)_{L_2(\Om)}
\\
&-\RE\left(u^{(\mu)}\D W, u^{(\mu)}W\right)_{L_2(\Om)}-2\RE\left(W\nabla u^{(\mu)}, u^{(\mu)}\nabla W\right)_{L_2(\Om)}.
\end{align*}
We calculate the last term in the right hand side of this equation by one more integration by parts. Using (\ref{1.7}) and (\ref{3.1}), we get
\begin{align*}
-&2\RE(W\nabla u^{(\mu)},u^{(\mu)}\nabla W)_{L_2(\Om)}=-\frac{1}{2} \int\limits_{\Om} \nabla W^2\cdot \nabla |u^{(\mu)}|^2\di x
\\
&=\frac{1}{2} \int\limits_{\G_-} W^2 \frac{\p}{\p x_2} |u^{(\mu)}|^2\di x_1+\frac{1}{2}\int\limits_{\Om} W^2\D |u^{(\mu)}|^2\di x
\\
&=\big((b+K+\mu) u^{(\mu)}W,u^{(\mu)}W\big)_{L_2(\G_-)} + \RE (u^{(\mu)}W, W\D u^{(\mu)})_{L_2(\Om))} + \|W\nabla u^{(\mu)}\|_{L_2(\Om)}^2.
\end{align*}
We substitute the obtained identity into the previous one and we find
\begin{equation}\label{3.31}
\begin{aligned}
\|\nabla (u^{(\mu)}W)\|_{L_2(\Om)}^2=&\int\limits_{\g_\e} |u^{(\mu)}|^2 \frac{\p W}{\p x_2}\di x_1+ (K+\mu) (u^{(\mu)},u^{(\mu)}W)_{L_2(\G_\e)}
\\
&-\RE (u^{(\mu)}\D W, u^{(\mu)}W)_{L_2(\Om)}+\|W\nabla u^{(\mu)}\|_{L_2(\Om)}^2.
\end{aligned}
\end{equation}
It follows from (\ref{3.23}) and  (\ref{3.28}) that
\begin{equation*}
 \big|(u^{(\mu)}\D W, u^{(\mu)}W)_{L_2(\Om)}\big| \leqslant C\e^{3/2} (K+\mu)^2 |\ln\e(K+\mu)| \|f\|_{L_2(\Om)}^2.
\end{equation*}
Following the arguments of the proof of Lemma~3.5 in \cite{BBC1}, one can check easily that
\begin{equation*}
\left|\int\limits_{\g_\e} |u^{(\mu)}|^2 \frac{\p W}{\p x_2}\di x_1\right| \leqslant C(K+\mu)\|f\|_{L_2(\Om)}^2.
\end{equation*}
Due to (\ref{3.32}) we have
\begin{equation*}
\big|(u^{(\mu)},u^{(\mu)}W)_{L_2(\G_\e)}\big|\leqslant C\e(K+\mu) |\ln\e(K+\mu)| \|f\|_{L_2(\Om)}.
\end{equation*}
We substitute the three last estimates and (\ref{3.23}) into (\ref{3.31}) and it completes the proof.
\end{proof}

It is clear that the resolvent $(\mathcal{H}_{\mathrm{R}}^{(\mu)}+\iu)^{-1}$  is analytic in $\mu$ and this is why
\begin{equation}\label{3.33}
\|(\mathcal{H}_{\mathrm{R}}^{(\mu)}+\iu)^{-1} -(\mathcal{H}_{\mathrm{R}}^{(0)}+\iu)^{-1}\|_{L_2(\Om)\to \H^1(\Om)}\leqslant C\mu.
\end{equation}
This inequality,  Lemma~\ref{lm3.5}, the estimates (\ref{1.12}), and (\ref{3.23}) yield the estimate (\ref{1.13}).   The estimates (\ref{1.8}) and
 (\ref{1.9}) follow from (\ref{1.12}), (\ref{3.23}) and  (\ref{3.33}).

\section{Resolvent convergence: homogenized Dirichlet condition}

In this section we study the uniform resolvent convergence for the case of the homogenized Dirichlet condition and prove Theorem~\ref{th1.4}. The case of periodic alternation $\vt_\e(t)\equiv t$, $a_j^\pm\equiv\eta$ for the pure Laplacian $\mathcal{H}_\e=-\D$ was proved in \cite[Sec.2]{BC}. Here we employ similar ideas but with changes required by the non-periodicity of the alternation and for the more general operator.

Given $f\in L_2(\Om)$, let us denote
\begin{equation*}
u_\e:=(\mathcal{H}_\e-\iu)^{-1}f, \quad u_0:=(\mathcal{H}_{\mathrm{D}}-\iu)^{-1}f, \quad v_\e:=u_\e-u_0.
\end{equation*}
The last function belongs to $\Ho^1(\Om,\G_+\cup\g_\e)$ and it is the generalized solution of the boundary value problem
\begin{gather*}
\big((\iu\nabla +A)^2+V-\iu\big) v_\e=0\quad\text{in}\quad \Om,\\ v_\e=0\quad\text{on}\quad \G_+\cup\g_\e,\qquad \left(-\frac{\p}{\p x_2}-\iu A_2+b\right) v_\e=\frac{\p u_0}{\p x_2}\quad\text{on}\quad \G_\e.
\end{gather*}
We multiply this equation by $\overline{v}_\e$, 
integrate by parts over $\Om$ taking into consideration the boundary conditions, and 
get
\begin{equation}
\begin{aligned}
0=& \int\limits_{\G_-} \overline{v}_\e \left(\frac{\p}{\p x_2}+\iu A_2\right)v_\e\di x_1+\big\|(\iu\nabla+A)v_\e\big\|_{L_2(\Om)}^2+ (V v_\e, v_\e)_{L_2(\Om)}-\iu \|v_\e\|_{L_2(\Om)}^2
\\
=& \big\|(\iu\nabla+A)v_\e\big\|_{L_2(\Om)}^2+ (V v_\e, v_\e)_{L_2(\Om)}-\iu \|v_\e\|_{L_2(\Om)}^2+(b v_\e, v_\e)_{L_2(\G_\e)}^2 - \left(\frac{\p u_0}{\p x_2}, v_\e\right)_{L_2(\G_\e)}.
\end{aligned}\label{5.1}
\end{equation}
Since $u_0\in\H^2(\Om)$, the term $\left(\frac{\p u_0}{\p x_2}, v_\e\right)_{L_2(\G_\e)}$ is well-defined by the embedding of $\H^2(\Om)$ into $\H^1(\G_-)$ and
\begin{equation*}
\left|\left(\frac{\p u_0}{\p x_2}, v_\e\right)_{L_2(\G_\e)}\right|\leqslant C\Big\|\frac{\p u_0}{\p x_2}\Big\|_{L_2(\G_\e)} \|v_\e\|_{L_2(\G_\e)} \leqslant C\|u_0\|_{\H^2(\Om)}\|v_\e\|_{L_2(\G_\e)}.
\end{equation*}
Here and till the end of the proof we denote by $C$ inessential constants independent of $\e$ and $f$. The last estimate, (\ref{5.1}), (\ref{3.53}), and the identity $v_\e=u_\e$ on $\G_\e$ yield
\begin{equation}\label{5.2}
\|v_\e\|_{\H^1(\Om)}^2\leqslant C\left( \|u_\e\|_{L_2(\G_\e)}^2+ \|u_0\|_{\H^2(\Om)} \|u_\e\|_{L_2(\G_\e)}\right).
\end{equation}

We employ again the rescaled variables $\xi$ defined in (\ref{3.34}) and introduce one more auxiliary function
\begin{equation*}
X_\eta=X_\eta(\xi):=\RE\ln\big(\sin\rho+\sqrt{\sin^2\rho-\sin^2\eta}
\big)-\xi_2,\quad \rho=\xi_1+\iu\xi_2,
\end{equation*}
where the   branches of the logarithm and the square root are fixed by the requirements $\ln 1=0$, $\sqrt{1}=1$. This function was introduced in \cite{G-SPMJ98}. It was proven that it is harmonic in the half-space $\xi_2>0$, even and $\pi$-periodic in $\xi_1$, decays exponentially as $\xi_2\to+\infty$, and satisfies the boundary condition
\begin{align*}
&X_\eta=\ln\sin\eta &&\text{on}\quad \g(\eta):=\bigcup\limits_{j\in\mathds{Z}} \{\xi: |\xi_1-\pi j|<\eta,\ \xi_2=0\},
\\
&\frac{\p X_\eta}{\p \xi_2}=-1 &&\text{on}\quad \G(\eta):=O\xi_1\setminus\overline{\g(\eta)}.
\end{align*}
In \cite[Sec. 3]{BC} it was shown that the function $X_\eta$ is continuous in $\{\xi: \xi_2\geqslant 0\}$ and obeys the estimate
\begin{equation}\label{5.7}
|X_\eta(\xi)|\leqslant |\ln\sin\eta|
\end{equation}
uniformly for $\xi_2\geqslant 0$.

Denote
\begin{equation*}
\eta_*(\e):=\min\{\inf\limits_j a_j^+(\e), \inf\limits_j a_j^-(\e)\}.
\end{equation*}
By the assumption (A3) we get
\begin{equation}\label{5.4}
\eta(\e)\leqslant \eta_*(\e)\leqslant c_3\eta(\e)\leqslant \frac{\pi}{2}.
\end{equation}
In the same way as in Lemma~3.2 in \cite{BC}, one can check easily that $u_\e X_{\eta_*(\e)}(\cdot\,\e^{-1})\in\Ho^1(\Om,\G_+\cup\g_\e)$. We choose this product as the test function in the integral identity for $u_\e$,
\begin{equation*}
\mathfrak{h}_\e[u_\e,u_\e X_{\eta_*}]=(f,u_\e X_{\eta_*})_{L_2(\Om)},
\end{equation*}
or, by (\ref{1.4}),
\begin{equation}\label{5.5}
\begin{aligned}
\big(&(\iu\nabla+A)u_\e,X_{\eta_*}(\iu\nabla+A) u_\e\big)_{L_2(\Om)} + \big((\iu\nabla+A)u_\e,\iu u_\e \nabla X_{\eta_*}\big)_{L_2(\Om)}
\\
&+(V u_\e,X_{\eta_*}u_\e)_{L_2(\Om)}+
(b u_\e,u_\e)_{L_2(\Gamma_{\e})} -\iu (u_\e, X_{\eta_*} u_\e)_{L_2(\Om)} = (f,u_\e X_{\eta_*})_{L_2(\Om)},
\end{aligned}
\end{equation}
where $X_{\eta_*}=X_{\eta_*}(\cdot\,\e^{-1})$.

Employing the described properties of $X_\eta$, we integrate by parts as follows,
\begin{align*}
&\RE \big((\iu\nabla+A) u_\e, \iu u_\e \nabla X_{\eta_*}\big)_{L_2(\Om)} =\RE (\nabla  u_\e, u_\e  \nabla X_{\eta_*})_{L_2(\Om)}
\\
&= \frac{1}{2}\int\limits_{\Om} \nabla X_{\eta_*}\cdot \nabla |u_\e|^2\di x =
- \frac{1}{2}\int\limits_{\G_-} |u_\e|^2\frac{\p X_{\eta_*}}{\p x_2} \di x_1
- \int\limits_{\Om} |u_\e|^2\D X_{\eta_*}\di x
\\
&=\frac{1}{2\e} \int\limits_{\G_\e} \vt_\e'|u_\e|^2\di x_1- \int\limits_{\Om} |u_\e|^2\D X_{\eta_*}\di x.
\end{align*}
This identity, (\ref{1.20}), and the real part of (\ref{5.5}) imply
\begin{align*}
\|u_\e\|_{L_2(\G_\e)}^2&+2\e (b u_\e, X_{\eta_*}u_\e)_{L_2(\G_\e)}=
\e\int\limits_{\Om} |u_\e|^2\D X_{\eta_*}\di x+2\e (f, X_{\eta_*}u_\e)_{L_2(\Om)}
\\
&-2\e \big((\iu\nabla+A)u_\e,X_{\eta_*}(\iu\nabla+A) u_\e\big)_{L_2(\Om)}-2\e (V u_\e,X_{\eta_*}u_\e)_{L_2(\Om)}.
\end{align*}
By the boundedness of $b$, $V$, and $A$ it follows  that
\begin{equation}\label{5.6}
\|u_\e\|_{L_2(\G_\e)}^2\leqslant C\e \left( \left|\int\limits_{\Om} |u_\e|^2\D X_{\eta_*}\di x\right|+ |\ln\sin\eta_*(\e)| \Big(\|u_\e\|_{\H^1(\Om)}^2+\|f\|_{L_2(\Om)}^2\Big)\right).
\end{equation}
Let us estimate the first term in the right hand side of the last inequality.

In view of the harmonicity of $X_{\eta_*}$ and the definition
(\ref{3.34}) of $\xi$ we get
\begin{align*}
\D X_{\eta_*}=2\frac{\vt_\e'}{\e} x_2\frac{\p}{\p x_1}\frac{\p X_{\eta_*}}{\p\xi_2}+\frac{\p}{\p x_1} X_{\eta_*} -\left(\frac{2\vt_\e''}{\vt_\e'}+ \left(\frac{\vt_\e''}{\vt_\e'}\right)^2\right) \xi_2^2\frac{\p^2 X_{\eta_*}}{\p\xi_2^2}+ \frac{\vt_\e'''-\vt_\e''}{\vt_\e'} \xi_2\frac{\p X_{\eta_*}}{\p\xi_2}.
\end{align*}
Hence, we can integrate by parts as follows, 
\begin{align*}
\int\limits_{\Om} |u_\e|^2\D X_{\eta_*}\di x=&-2\int\limits_{\Om} X_{\eta_*}\frac{\xi_2}{\vt_\e'} \frac{\p}{\p x_1} \vt_\e'|u_\e|^2\di x- \int\limits_{\Om} X_{\eta_*} \frac{\p}{\p x_1}  |u_\e|^2\di x
\\
&+ \int\limits_{\Om} |u_\e|^2 \left( \frac{\vt_\e'''-\vt_\e''}{\vt_\e'} \xi_2\frac{\p }{\p\xi_2}-\left(\frac{2\vt_\e''}{\vt_\e'}+ \left(\frac{\vt_\e''}{\vt_\e'}\right)^2\right) \xi_2^2\frac{\p^2 }{\p\xi_2^2}  \right)X_{\eta_*}\di x,
\end{align*}
and thus
\begin{equation}\label{5.8}
\left|\int\limits_{\Om} |u_\e|^2\D X_{\eta_*}\di x\right| \leqslant C\left(\sup\limits_{\xi_2\geqslant 0} \Big|\xi_2^2 \frac{\p^2 X_{\eta_*}}{\p\xi_2^2}\Big| + \sup\limits_{\xi_2\geqslant 0} \Big|\xi_2 \frac{\p X_{\eta_*}}{\p\xi_2}\Big|+\sup\limits_{\xi_2\geqslant 0} |X_{\eta_*}|\right)\|u_\e\|_{\H^1(\Om)}^2.
\end{equation}

\begin{lemma}\label{lm5.1}
The estimate
\begin{equation*}
\sup\limits_{\xi_2\geqslant 0} \Big|\xi_2^2 \frac{\p^2 X_{\eta_*}}{\p\xi_2^2}\Big|+ \sup\limits_{\xi_2\geqslant 0} \Big|\xi_2 \frac{\p X_{\eta_*}}{\p\xi_2}\Big|\leqslant C\cos\eta_*
\end{equation*}
holds true.
\end{lemma}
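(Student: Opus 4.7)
The strategy is to exploit that $X_{\eta_*}$ is the real part of an analytic function in the upper half $\rho$-plane with $\rho := \xi_1 + \iu \xi_2$. Namely, $X_{\eta_*}(\xi) = \RE F(\rho)$ with $F(\rho) := \ln(\sin\rho + \sqrt{g(\rho)}) + \iu\rho$ and $g(\rho) := \sin^2\rho - \sin^2\eta_* = \sin(\rho - \eta_*)\sin(\rho + \eta_*)$. By the Cauchy--Riemann equations, $\p X_{\eta_*}/\p\xi_2 = -\IM F'(\rho)$ and $\p^2 X_{\eta_*}/\p\xi_2^2 = -\RE F''(\rho)$, so the lemma reduces to proving the uniform bound $\xi_2 |F'(\rho)| + \xi_2^2 |F''(\rho)| \leqslant C\cos\eta_*$.

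A direct computation yields $F'(\rho) = \cos\rho/\sqrt{g(\rho)} + \iu$. Differentiating once more and using $\sin^2\rho + \cos^2\rho = 1$ to simplify the numerator produces the key identity
\[
F''(\rho) = -\frac{\cos^2\eta_*\,\sin\rho}{g(\rho)^{3/2}}.
\]
Analogously, multiplying and dividing by $\cos\rho + \iu\sqrt g$ and invoking the algebraic identity $(\cos\rho - \iu\sqrt g)(\cos\rho + \iu\sqrt g) = \cos^2\eta_*$ allows to rewrite
\[
F'(\rho) = \frac{\cos^2\eta_*}{\sqrt{g(\rho)}\bigl(\cos\rho - \iu\sqrt{g(\rho)}\bigr)}.
\]
Both expressions exhibit an explicit $\cos^2\eta_*$ prefactor; one factor of $\cos\eta_*$ will survive to the final estimate, while the second is consumed in controlling the singularities of the remaining rational expressions.

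The next ingredient is a uniform lower bound for $|g(\rho)|$. Separating $\sin^2\rho$ into its real and imaginary parts and simplifying yields
\[
|g(\rho)|^2 = \bigl(|\sin\rho|^2 - \sin^2\eta_*\bigr)^2 + 4\sin^2\eta_* \cos^2\xi_1 \sinh^2\xi_2, \qquad |\sin\rho|^2 = \sin^2\xi_1 + \sinh^2\xi_2.
\]
Based on this, the estimate is split into the regions $\xi_2 \geqslant 1$ and $\xi_2 \leqslant 1$. For $\xi_2 \geqslant 1$ the exponential growth $|\sin\rho|, |\cos\rho|, \sqrt{|g|} \sim \E^{\xi_2}$ combined with the $\cos^2\eta_*$ prefactor produces $|F'(\rho)|, |F''(\rho)| \leqslant C\cos^2\eta_* \E^{-2\xi_2}$, and therefore $\xi_2 |F'| + \xi_2^2 |F''| \leqslant C\cos^2\eta_* \leqslant C\cos\eta_*$ immediately. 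For $\xi_2 \leqslant 1$ the danger lies in the behavior near the branch points $\rho = \pm\eta_* + \pi j$; there the second term in the decomposition of $|g|^2$ delivers $|g(\rho)| \geqslant 2\sin\eta_* |\cos\xi_1|\sinh\xi_2$, which, combined with $\xi_2 \leqslant \sinh\xi_2$ and the elementary AM--GM inequality $2 a b \leqslant a^2 + b^2$, suffices to tame both rational expressions and yields the required $C\cos\eta_*$ bound.

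The main technical obstacle is the degenerate subregime in which $|\cos\xi_1|$ and $\bigl||\sin\rho|^2 - \sin^2\eta_*\bigr|$ vanish simultaneously, so that neither term in the decomposition of $|g|^2$ dominates. This can happen only when $\xi_1$ is close to $\pi/2 + \pi j$ and simultaneously $\sin^2\eta_*$ is close to one; in that case $\cos\eta_*$ is itself small and the prefactor $\cos^2\eta_*$ already delivers the desired order $\cos\eta_*$, while the remaining uniform analysis reduces to a one-dimensional estimate in $\xi_2$ at $\xi_1 = \pi/2 + \pi j$, which can be handled by direct computation.
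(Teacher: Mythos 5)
Your setup is essentially the paper's: you compute the same explicit forms $F'(\rho)=\cos\rho/\sqrt{g}+\iu$ and $F''(\rho)=-\cos^2\eta_*\sin\rho/g^{3/2}$ (with $g:=\sin^2\rho-\sin^2\eta_*$), and use the same algebraic identity $\cos^2\rho+g=\cos^2\eta_*$ to extract the prefactor $\cos^2\eta_*$. Your formula for $|g|^2$ is correct. But the way you propose to finish the estimate has two genuine gaps. First, bounding $\xi_2^2|F''|=\xi_2^2\cos^2\eta_*|\sin\rho|/|g|^{3/2}$ directly does not work near the branch points: with $\rho\to\eta_*$ one has $|g|\sim 2\sin\eta_*\cos\eta_*|\rho-\eta_*|$ and $|\sin\rho|\sim\sin\eta_*$, and even your lower bound $|g|\geqslant 2\sin\eta_*|\cos\xi_1|\sinh\xi_2$ only yields
\[
\frac{\xi_2^2\cos^2\eta_*\,|\sin\rho|}{|g|^{3/2}}\;\lesssim\;\xi_2^{1/2}\,\frac{\cos^{1/2}\eta_*}{\sin^{1/2}\eta_*},
\]
which is not $\leqslant C\cos\eta_*$ uniformly (it diverges as $\eta_*\to0$ at fixed $\xi_2$). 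The paper circumvents this by writing $\sin\rho=(\sin\rho-\sin\eta_*)+\sin\eta_*$ together with $g=(\sin\rho-\sin\eta_*)(\sin\rho+\sin\eta_*)$, so one term loses a full power of $g$ in the denominator and becomes $1/(g^{1/2}(\sin\rho+\sin\eta_*))$; nothing in your scheme reproduces that cancellation. Second, for the first-derivative estimate the representation $F'=\cos^2\eta_*/(\sqrt g\,(\cos\rho-\iu\sqrt g))$ is only useful if one proves $|\cos\rho-\iu\sqrt g|\geqslant c\cos\eta_*$, which you do not address; the product $|\cos\rho-\iu\sqrt g|\,|\cos\rho+\iu\sqrt g|=\cos^2\eta_*$ shows the two factors cannot both be bounded below by $c\cos\eta_*$, so one needs to determine which factor stays large, and that is exactly the paper's argument that $\cos\rho/\cos\eta_*$ lies in the closed first quadrant and $z\mapsto z+\sqrt{z^2-1}$ is bounded away from zero there. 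Finally, your quoted lower bound on $|g|$ is weaker than what is needed; the estimate that actually makes both terms collapse to $O(\cos\eta_*)$ is the clean global bound $|g|\geqslant\sinh^2\xi_2$, which the paper gets at once from the factorization $g=\sin(\rho-\eta_*)\sin(\rho+\eta_*)$ and $|\sin(t+\iu\xi_2)|\geqslant\sinh\xi_2$. It can be extracted from your $|g|^2$ decomposition, but only after a nontrivial computation (one must check that $(\sin^2\xi_1-\sin^2\eta_*)^2+\big(4\sin^2\eta_*\cos^2\xi_1-2(\sin^2\eta_*-\sin^2\xi_1)\big)\sinh^2\xi_2\geqslant 0$), and you have not done it; an AM--GM appeal does not produce it.
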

\begin{proof}
Since the function $X_{\eta_*}$ is even and $\pi$-periodic in $\xi_1$, we have
\begin{equation}\label{5.10}
\sup\limits_{\xi_2\geqslant 0} \Big|\xi_2^2 \frac{\p^2 X_{\eta_*}}{\p\xi_2^2}\Big|+ \sup\limits_{\xi_2\geqslant 0} \Big|\xi_2 \frac{\p X_{\eta_*}}{\p\xi_2}\Big|=\sup\limits_{\genfrac{}{}{0 pt}{}{0\leqslant \xi_1\leqslant \pi/2}{\xi_2\geqslant 0}} \Big|\xi_2^2 \frac{\p^2 X_{\eta_*}}{\p\xi_2^2}\Big|+ \sup\limits_{\genfrac{}{}{0 pt}{}{0\leqslant \xi_1\leqslant \pi/2}{\xi_2\geqslant 0}}  \Big|\xi_2 \frac{\p X_{\eta_*}}{\p\xi_2}\Big|.
\end{equation}
It follows from the definition of $X_{\eta_*}$ that
\begin{align*}
\frac{\p X_{\eta_*}}{\p\xi_2}=&-\IM \frac{\cos\rho}{\sqrt{\sin^2\rho-\sin^2\eta}}-1=-\IM \frac{\cos\rho+\iu\sqrt{\sin^2\rho-\sin^2\eta}}{\sqrt{\sin^2\rho-\sin^2\eta}} \\
=&-\IM \frac{\cos\eta_*}{\sqrt{\sin^2\rho-\sin^2\eta}\left(
\frac{\cos\rho}{\cos\eta_*}+\sqrt{\frac{\cos^2\rho}{\cos^2\eta_*}-1}
\right)}.
\end{align*}
As $0\leqslant \xi_1\leqslant \pi/2$, $\xi_2\geqslant 0$, the fraction $\frac{\cos\rho}{\cos\eta_*}$ ranges in the  closure of the first quarter of the complex plane. In this quarter the function $z\mapsto z+\sqrt{z^2-1}$ has no zeroes. Moreover, as $z$ tends to infinity within this quarter, the function $z+\sqrt{z^2-1}$ tends to infinity, too. Hence, there exists a positive constant $C$ such that
\begin{equation*}
|z+\sqrt{z^2-1}|\geqslant C>0\quad\text{uniformly in}\quad \RE z\geqslant 0,\quad \IM z\geqslant 0.
\end{equation*}
This estimate implies
\begin{equation*}
\left|\frac{\cos\rho}{\cos\eta_*}+ \sqrt{\frac{\cos^2\rho}{\cos^2\eta_*}-1}\right| \geqslant C,\quad 0\leqslant \xi_1\leqslant \pi/2, \quad \xi_2\geqslant 0,
\end{equation*}
and therefore
\begin{equation}\label{5.11}
\left|\xi_2\frac{\p X_{\eta_*}}{\p \xi_2}\right|\leqslant \frac{C\xi_2\cos\eta_*}{|\sin^2\rho-\sin^2\eta_*|^{1/2}},\quad
0\leqslant \xi_1\leqslant \pi/2, \quad \xi_2\geqslant 0,
\end{equation}
where the constant $C$ is independent of $\xi$ and $\eta_*$. By direct calculations we check that
\begin{align}
&
\begin{aligned}
|\sin^2\rho-\sin^2\eta_*&|^{1/2}=|\sin(\rho-\eta_*)\sin(\rho+\eta_*)|^{1/2} \\
= &\big(\sinh^2\xi_2+\sin^2(\xi_1-\eta_*)
\big)^{1/4}\big(\sinh^2\xi_2+\sin^2(\xi_1+\eta_*)
\big)^{1/4},
\end{aligned}\label{5.15}
\\
&|\sin^2\rho-\sin^2\eta_*|^{1/2}
\geqslant  \sinh\xi_2.\label{5.13}
\end{align}
Hence, by (\ref{5.11})
\begin{equation}\label{5.12}
\left|\xi_2\frac{\p X_{\eta_*}}{\p \xi_2}\right|\leqslant \frac{C\xi_2\cos\eta_*}{\sinh \xi_2}\leqslant C \cos\eta_*,\quad
0\leqslant \xi_1\leqslant \pi/2, \quad \xi_2\geqslant 0.
\end{equation}

In the same way we estimate the first term in the right hand side of (\ref{5.10}). We have
\begin{equation*}
\frac{\p^2 X_{\eta_*}}{\p\xi_2^2}=\cos^2\eta_* \RE \frac{\sin\rho }{(\sin^2\rho-\sin^2\eta_*)^{3/2}}.
\end{equation*}
Thus,
\begin{align}
\xi_2^2\frac{\p^2 X_{\eta_*}}{\p\xi_2^2}=&\xi_2^2  \cos^2\eta_* \RE \frac{1}{(\sin^2\rho-\sin^2\eta_*)^{1/2} (\sin\rho-\sin\eta_*)} \nonumber
\\
&+\xi_2^2 \cos^2\eta_*\sin\eta_* \RE\frac{1}{(\sin^2\rho-\sin^2\eta_*)^{3/2}}, \nonumber
\\
\left|\xi_2^2\frac{\p^2 X_{\eta_*}}{\p\xi_2^2}\right| \leqslant& \frac{\xi_2^2\cos^2\eta_*} {|\sin^2\rho-\sin^2\eta_*|^{1/2}|\sin\rho-\sin\eta_*|}+  \frac{\xi_2^2 \cos^2\eta_*\sin\eta_* }{|\sin^2\rho-\sin^2\eta_*|^{3/2}}.\label{5.14}
\end{align}
It is easy to see that
\begin{equation*}
|\sin\rho+\sin\eta_*|^2=\sinh^2\xi_2+\sin^2\xi_1+\sin^2\eta_* +2\sin\xi_1\cosh\xi_2\sin\eta_* \geqslant \sinh^2\xi_2
\end{equation*}
for $0\leqslant\xi_1\leqslant \pi/2$, $\xi_2\geqslant 0$. It also follows from (\ref{5.15}) that
\begin{align*}
|\sin^2\rho-\sin^2\eta_*|^{3/2}\geqslant & \big(\sinh^2\xi_2+\sin^2(\xi_1+\eta)\big)^{1/2}\sinh^2\xi_2
\\
\geqslant &\sinh^2\xi_2\min(\sin\eta_*,\cos\eta_*)\geqslant \sinh^2\xi_2\sin\eta_*\cos\eta_*.
\end{align*}
We substitute two last estimates and (\ref{5.13}) into (\ref{5.14}),
\begin{equation*}
\left|\xi_2^2\frac{\p^2 X_{\eta_*}}{\p\xi_2^2}\right| \leqslant \frac{\xi_2^2\cos^2\eta_*} {\sinh^2\xi_2}+  \frac{\xi_2^2 \cos\eta_*}{\sinh^2\xi_2}\leqslant C\cos\eta_*,\quad \xi_2\geqslant 0,\quad 0\leqslant \xi_1\leqslant \frac{\pi}{2}.
\end{equation*}
The obtained estimate, (\ref{5.12}), and (\ref{5.10}) complete the proof.
\end{proof}

The proven lemma and the estimates (\ref{5.7}), (\ref{5.8}) yield
\begin{equation*}
\left|\int\limits_{\Om} |u_\e|^2\D X_{\eta_*}\di x\right| \leqslant C\big(|\ln\sin\eta_*(\e)|+\cos\eta_*(\e)\big)\|u_\e\|_{\H^1(\Om)}^2.
\end{equation*}
Together with (\ref{5.6}) it implies
\begin{equation*}
\|u_\e\|_{L_2(\G_\e)}^2\leqslant C\e \big(|\ln\sin\eta_*(\e)|+\cos\eta_*(\e)
\big) \big( \|u_\e\|_{\H^1(\Om)}^2+\|f\|_{L_2(\Om)}^2\big).
\end{equation*}
We observe that the functions $u_0$ and $u_\e$ satisfy the estimates
\begin{equation*}
\|u_0\|_{\H^2(\Om)}\leqslant C\|f\|_{L_2(\Om)},\quad \|u_\e\|_{\H^1(\Om)}\leqslant C\|f\|_{L_2(\Om)}.
\end{equation*}
Due to (\ref{5.4}) we have
\begin{equation*}
|\ln\sin\eta_*(\e)|\leqslant C\|f\|_{L_2(\Om)},\quad \cos\eta_*(\e)\leqslant \cos\eta(\e).
\end{equation*}
It follows from last five estimates and (\ref{5.2}) that
\begin{equation*}
\|v_\e\|_{\H^1(\Om)}^2\leqslant C\e^{1/2} \big(|\ln\sin\eta(\e)|+\cos\eta(\e)\big)^{1/2}\|f\|_{L_2(\Om)}^2
\end{equation*}
and it proves Theorem~\ref{th1.4}.

\section{Analysis of the operator $\Hpe(\tau)$}

In this section we prove Theorem~\ref{th1.3}.

We state now two auxiliary lemmas proved in \cite{BC} and \cite{BBC1} that we give below for the reader's convenience.

\begin{lemma}\label{lm4.1}
Let $|\tau|<1-\k$, where $\k\in(0,1)$, and for a given function $f\in L_2(\Om_\e)$ we let
\begin{equation*}
U_\e=\left(\Hpe(\tau)-\frac{\tau^2}{\e^2}\right)^{-1}f,\quad f\in
L_2(\Om_\e).
\end{equation*}
Assume $f\in \mathfrak{L}_\e^\bot$. Then
\begin{equation*}
\|U_\e\|_{L_2(\Om_\e)}\leqslant
\frac{\e}{\k^{1/2}}\|f\|_{L_2(\Om_\e)},\quad \|\nabla
U_\e\|_{L_2(\Om_\e)}\leqslant \frac{\e}{2\k} \|f\|_{L_2(\Om_\e)}.
\end{equation*}
For any $u\in\Hoper^1(\Om_\e,\Gp_+)$ and $|\tau|\leqslant 1-\k$, we have the following inequalities
\begin{equation}\label{4.3}
\begin{aligned}
&\Big\| \left(\iu \frac{\p}{\p x_1}-
\frac{\tau}{\e}\right)u\Big\|_{L_2(\Om_\e)}^2-\frac{\tau^2}{\e^2}
\|u\|_{L_2(\Om_\e)}^2\geqslant \k \Big\|\frac{\p u}{\p
x_1}\Big\|_{L_2(\Om_\e)}^2,
\\
&\Big\|\frac{\p u}{\p x_2}\Big\|_{L_2(\Om_\e)}\geqslant
\frac{1}{2}\|u\|_{L_2(\Om)}.
\end{aligned}
\end{equation}

\end{lemma}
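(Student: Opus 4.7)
I would split the lemma into its three ingredients: the two form inequalities (\ref{4.3}), which form the algebraic core, and the two resolvent bounds, which follow from (\ref{4.3}) together with the orthogonality $f\perp\mathfrak{L}_\e$. First, for (\ref{4.3}): any $u\in\Hoper^1(\Om_\e,\Gp_+)$ admits a Fourier expansion $u(x_1,x_2)=\sum_{k\in\mathds{Z}} u_k(x_2)\,e^{2\iu k x_1/\e}$ adapted to the $\e\pi$-periodicity, and $\iu\p_{x_1}-\tau/\e$ acts on the $k$-th mode as multiplication by $-(2k+\tau)/\e$. Parseval reduces the first inequality in (\ref{4.3}) to the pointwise bound $(2k+\tau)^2-\tau^2\geq \k(2k)^2$, i.e.\ $4k\bigl((1-\k)k+\tau\bigr)\geq 0$ for every $k\in\mathds{Z}$, which holds term by term for $|\tau|\leq 1-\k$ by separating the cases $k\geq 1$, $k=0$, $k\leq -1$. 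The second inequality is a one-dimensional Poincar\'e bound in $x_2$: for a.e.\ $x_1$ the slice $u(x_1,\cdot)$ lies in $\H^1(0,\pi)$ and vanishes at $x_2=\pi$, so the first eigenvalue of $-\p_{x_2}^2$ on $(0,\pi)$ with Dirichlet end at $\pi$ (and free end at $0$) equals $1/4$ with eigenfunction $\cos(x_2/2)$, giving $\int_0^\pi|u|^2\,dx_2\leq 4\int_0^\pi|\p_{x_2}u|^2\,dx_2$; integrating over $x_1$ produces $\|\p_{x_2} u\|\geq \tfrac12\|u\|$.

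Next, using (\ref{4.3}), I would derive the two resolvent bounds. Decompose $L_2(\Om_\e)=\mathfrak{L}_\e\oplus\mathfrak{L}_\e^\bot$ and split $U_\e=U_\e^1+U_\e^2$ by taking $U_\e^1(x_2):=(\e\pi)^{-1}\int_{-\e\pi/2}^{\e\pi/2} U_\e(x_1,x_2)\,dx_1$. Testing the defining identity $\hpe(\tau)[U_\e,U_\e]-(\tau^2/\e^2)\|U_\e\|^2=(f,U_\e)$ with $U_\e$ itself, taking the real part, and applying (\ref{4.3})(a) would yield
\[
\k\|\p_{x_1} U_\e\|^2+\|\p_{x_2} U_\e\|^2+b\|U_\e\|^2_{L_2(\Gp_\e)} \leq \RE(f,U_\e).
\]
Because $f\in\mathfrak{L}_\e^\bot$ and $U_\e^1\in\mathfrak{L}_\e$, the right-hand side equals $\RE(f,U_\e^2)$. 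The Poincar\'e--Wirtinger inequality in $x_1$ on $(-\e\pi/2,\e\pi/2)$, whose first nonzero periodic eigenvalue is $4/\e^2$, yields $\|U_\e^2\|\leq(\e/2)\|\p_{x_1}U_\e^2\|=(\e/2)\|\p_{x_1}U_\e\|$. Combined with Cauchy--Schwarz this gives $|\RE(f,U_\e^2)|\leq(\e/2)\|f\|\,\|\p_{x_1}U_\e\|$, and absorbing this into the leading $\k\|\p_{x_1}U_\e\|^2$ term delivers $\|\p_{x_1}U_\e\|\leq \e/(2\k)\|f\|$, hence $\|\p_{x_2} U_\e\|\leq \e/(2\sqrt\k)\|f\|$; (\ref{4.3})(b) then gives $\|U_\e\|\leq 2\|\p_{x_2}U_\e\|\leq(\e/\sqrt\k)\|f\|$. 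The gradient bound follows by combining the two derivative estimates.

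The main technical wrinkle is the possibly negative boundary contribution $b\|U_\e\|^2_{L_2(\Gp_\e)}$ in the test-function identity when $b<0$; I would handle it by absorbing it with a small fraction of $\|\p_{x_2}U_\e\|^2$ via a standard trace inequality on $\Om_\e$, using that $b$ is a fixed constant independent of $\e$. This only introduces an $\e$-independent numerical factor and does not affect the order of the resulting bounds. Everything else is routine bookkeeping on constants.
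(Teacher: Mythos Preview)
The paper does not prove this lemma; it merely cites the earlier works \cite{BC} and \cite{BBC1}, where the analogous statement was established for the pure Laplacian with $b=0$. Your plan---Fourier decomposition in $x_1$ to get the first inequality in (\ref{4.3}), the one-dimensional Poincar\'e bound with Dirichlet endpoint at $x_2=\pi$ for the second, and then the form identity combined with Poincar\'e--Wirtinger in $x_1$ and the orthogonality $f\perp\mathfrak{L}_\e$ for the resolvent bounds---is exactly the standard argument and reproduces the stated constants when $b\geqslant 0$.

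Your caveat about $b<0$ is well-placed, and in fact deserves a bit more caution than you give it. The trace inequality $\|U_\e\|_{L_2(\Gp_-)}^2\leqslant \d\|\p_{x_2}U_\e\|^2+\d^{-1}\|U_\e\|^2$ together with $\|U_\e\|\leqslant 2\|\p_{x_2}U_\e\|$ only lets you absorb the boundary term into $\|\p_{x_2}U_\e\|^2$ provided $|b|$ is below an absolute threshold (roughly $|b|<1/4$); for larger $|b|$ this direct absorption fails and the exact constants in the lemma cannot be recovered by your route. Since the cited references have $b=0$, the precise constants stated here are inherited from that case, and the present paper is tacitly relying on them without adjusting for the Robin term. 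Your approach is therefore correct for the lemma as originally proved; the discrepancy is in the paper's transcription rather than in your argument.
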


\begin{lemma}\label{lm4.2}
If $F\in L_2(0,\pi)$, then $|(\mathcal{Q}_\mu^{-1}F)(0)|\leqslant 5\|F\|_{L_2(0,\pi)}$.
\end{lemma}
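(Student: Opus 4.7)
The plan is to derive a closed-form expression for $u(0)$, where $u:=\mathcal{Q}_\mu^{-1}F$, and then estimate it by Cauchy--Schwarz. By the alternative description of $\mathcal{Q}_\mu$ recalled just before the lemma, $u\in\H^2(0,\pi)$ solves
\begin{equation*}
-u''=F\ \text{in}\ (0,\pi),\qquad u(\pi)=0,\qquad u'(0)=(b+K+\mu)\,u(0).
\end{equation*}
The first thing I would do is integrate the ODE from $x_2$ up to $\pi$ (where $u$ vanishes), obtaining the one-parameter representation
\begin{equation*}
u(x_2)=\alpha(\pi-x_2)-\int_{x_2}^{\pi}(t-x_2)\,F(t)\di t
\end{equation*}
with a single unknown constant $\alpha$, to be determined from the Robin condition at $0$.

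Next, I would evaluate at $x_2=0$ to read off $u(0)=\alpha\pi-\int_0^\pi tF(t)\di t$ and $u'(0)=-\alpha+\int_0^\pi F(t)\di t$. Substituting into the Robin condition gives a single linear equation for $\alpha$; solving for $\alpha$ and inserting the result back into the expression for $u(0)$, the terms involving $\int_0^\pi tF(t)\di t$ partly cancel and what remains is the compact formula
\begin{equation*}
u(0)=\frac{1}{1+(b+K+\mu)\pi}\int_0^\pi(\pi-t)F(t)\di t.
\end{equation*}
Cauchy--Schwarz then bounds the integral by $(\pi^3/3)^{1/2}\|F\|_{L_2(0,\pi)}=(\pi^{3/2}/\sqrt{3})\|F\|_{L_2(0,\pi)}$.

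The main (and essentially only) technical point is a lower bound on the denominator $|1+(b+K+\mu)\pi|$; note that the vanishing of this quantity is exactly the condition for $0$ to be an eigenvalue of $\mathcal{Q}_\mu$, so some hypothesis is needed here. In the periodic setting (\ref{1.36}) in which this lemma will be used, we have $K,\mu\geqslant 0$ and, in the cases of interest, $b+K+\mu\geqslant 0$, so $1+(b+K+\mu)\pi\geqslant 1$. Combined with the Cauchy--Schwarz bound this yields $|u(0)|\leqslant(\pi^{3/2}/\sqrt{3})\|F\|_{L_2(0,\pi)}<5\|F\|_{L_2(0,\pi)}$, as required. As a sanity check, an alternative route multiplying the ODE by $\bar u$ and integrating yields the energy identity $\|u'\|_{L_2}^2+(b+K+\mu)|u(0)|^2=(F,u)_{L_2}$, from which $\|u'\|_{L_2}\leqslant(\pi/\sqrt{2})\|F\|_{L_2}$ by Poincar\'e (since $u(\pi)=0$), and the trace bound $|u(0)|\leqslant\sqrt{\pi}\,\|u'\|_{L_2}$ gives the same conclusion with a slightly worse constant $\pi^{3/2}/\sqrt{2}<5$.
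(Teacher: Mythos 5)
The paper does not actually prove this lemma---it imports it verbatim from \cite{BC} and \cite{BBC1}---so there is no in-paper proof to compare against. Your Green's-function approach is the natural and expected one, and the core computation is correct: integrating twice from $\pi$ gives $u(x_2)=\alpha(\pi-x_2)-\int_{x_2}^\pi(t-x_2)F(t)\,dt$, the Robin condition at $0$ determines $\alpha$, and the formula
\begin{equation*}
u(0)=\frac{1}{1+(b+K+\mu)\pi}\int_0^\pi(\pi-t)F(t)\,dt
\end{equation*}
with the Cauchy--Schwarz bound $\bigl(\pi^3/3\bigr)^{1/2}\approx 3.21$ on the integral is all correct. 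Your alternative energy/Poincar\'e route and its constant $\pi^{3/2}/\sqrt{2}$ also check out.

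The one soft spot is exactly the one you flagged: the lower bound on $|1+(b+K+\mu)\pi|$. Your justification that ``in the cases of interest $b+K+\mu\geq 0$, so $1+(b+K+\mu)\pi\geq 1$'' is not actually supplied by the paper's hypotheses. Under (\ref{1.36}) the constant $b$ is an unrestricted real number, and $K$ may be $0$ with $\mu\to 0$, so $b+K+\mu$ can be negative and even close to $-1/\pi$, in which case the fixed constant $5$ fails. In the source papers \cite{BC}, \cite{BBC1} one has $b=0$, so $b+K+\mu=K+\mu\ge 0$ and the bound holds with room to spare; the present paper simply re-states the lemma without the implicit restriction. So what you identified is not a defect of your argument so much as a tacit hypothesis in the paper's statement (namely that $b+K+\mu$ is bounded below away from $-1/\pi$, e.g.\ $b+K+\mu\ge 0$). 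Within that regime your proof is complete and gives a constant well under $5$.
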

\bigskip
Let $f\in L_2(\Om_\e)$ and $f=F_\e+f_\e^\bot$, where $F_\e\in
\mathfrak{L}_\e$, $f_\e^\bot\in\mathfrak{L}_\e^\bot$,
\begin{gather}
F_\e(x_2)=\frac{1}{\pi\e^{1/2}}\int\limits_{-\frac{\e\pi}{2}}^{\frac{\e\pi}{2}}
f_\e(x)\di x_1,\quad f_\e^\bot(x):=f(x)-F_\e(x_2),
\nonumber
\\
\e\pi\|F_\e\|_{L_2(0,\pi)}^2+\|f_\e^\bot\|_{L_2(\Om_\e)}^2=
\|f\|_{L_2(\Om_\e)}^2,\quad U_\e:=\left(\Hpe(\tau)-\frac{\tau^2}{\e^2}\right)^{-1}F_\e. \label{4.4}
\end{gather}
It follows from Lemma~\ref{lm4.1} that
\begin{equation}\label{4.5}
\begin{aligned}
\bigg\|\left(\Hpe(\tau)-\frac{\tau^2}{\e^2}\right)^{-1}f-U_\e\bigg\|_{L_2(\Om_\e)}
=\bigg\|\left(\Hpe(\tau)-\frac{\tau^2}{\e^2}\right)^{-1}f_\e^\bot\bigg\|_{L_2(\Om_\e)}
\leqslant \e \k^{-1/2} \|f\|_{L_2(\Om_\e)}.
\end{aligned}
\end{equation}
Denote
\begin{equation}\label{4.12}
U^{(\mu)}_\e:=\mathcal{Q}_\mu^{-1}F_\e,
\quad V_\e(x):=U_\e(x)-U^{(\mu)}_\e(x)-U^{(\mu)}_\e(0)W(x,\e,\mu)\chi(x_2).
\end{equation}
We remind that the function $\chi$ was introduced after the equation (\ref{3.46}). The boundary conditions (\ref{3.1})  and the definition of $U_\e$ imply that $V_\e\in\Hoper^1(\Om_\e,\Gp_+\cup\gp_\e)$.

It follows from the definition of $U_\e$ and $U_\e^{(\mu)}$ that they satisfy the integral equations
\begin{align}
&\hpe(\tau)[U_\e,V_\e]-\frac{\tau^2}{\e^2}(U_\e,V_\e)_{L_2(\Om_\e)} =(F_\e,V_\e)_{L_2(\Om_\e)},\label{4.6}
\\
&\mathfrak{q}_\mu[U_\e,V_\e]-\frac{\tau^2}{\e^2} \big(
U_\e, V_\e(x_1,\cdot)\big)_{L_2(0,\pi)}=\big(F_\e, V_\e(x_1,\cdot)\big)_{L_2(0,\pi)}.\nonumber
\end{align}
We integrate the last equation over $x_1\in(-\e\pi/2,\e\pi/2)$ and use an obvious relation
\begin{align*}
\left( \left(\iu\frac{\p}{\p
x_1}-\frac{\tau}{\e}\right)U^{(\mu)}_\e, \left(\iu\frac{\p}{\p
x_1}-\frac{\tau}{\e}\right) V_\e\right)_{L_2(\Om_\e)}=&
-\frac{\tau}{\e}\left(U^{(\mu)}_\e, \left(\iu\frac{\p}{\p
x_1}-\frac{\tau}{\e}\right) V_\e\right)_{L_2(\Om_\e)}
\\
=&\frac{\tau^2}{\e^2} (U^{(\mu)}_\e, V_\e)_{L_2(\Om_\e)}.
\end{align*}
It leads us to the identity
\begin{align*}
&\left( \left(\iu\frac{\p}{\p
x_1}-\frac{\tau}{\e}\right)U^{(\mu)}_\e, \left(\iu\frac{\p}{\p
x_1}-\frac{\tau}{\e}\right) V_\e\right)_{L_2(\Om_\e)}+ \left(\frac{\p U_\e^{(\mu)}}{\p x_2},\frac{\p V_\e}{\p x_2}\right)_{L_2(\Om_\e)}
\\
&\hphantom{\iu\frac{\p}{\p
x_1}}- \frac{\tau^2}{\e^2} (U_\e^{(\mu)}, V_\e)_{L_2(\Om_\e)}+ (b+K+\mu) (U_\e^{(\mu)}, V_\e)_{L_2(\Gp_\e)}=(F_\e,V_\e)_{L_2(\Om_\e)}.
\end{align*}
We take the difference between the last identity and (\ref{4.6}), 
\begin{equation*}
\hpe(\tau)[U_\e-U_\e^{(\mu)},V_\e]-\frac{\tau^2}{\e^2}(U_\e-U_\e^{(\mu)}, V_\e)_{L_2(\Om_\e)}= (K+\mu) (U_\e^{(\mu)},V_\e)_{L_2(\Gp_\e)},
\end{equation*}
and since
\begin{equation*}
U_\e-U_\e^{(\mu)}=V_\e+U_\e^{(\mu)}(0)W\chi,
\end{equation*}
we get
\begin{align*}
\hpe(\tau)[V_\e,V_\e]-\frac{\tau^2}{\e^2}\|V_\e\|_{L_2(\Om_\e)}^2 =&(K+\mu) (U_\e^{(\mu)},V_\e)_{L_2(\Gp_\e)}
\\
&-U_\e^{(\mu)}(0)\hpe(\tau)[W\chi,V_\e]-\frac{\tau^2}{\e^2} (W\chi,V_\e)_{L_2(\Om_\e)}.
\end{align*}
Integrating by parts and employing (\ref{3.1}), we obtain
\begin{align*}
\hpe&(\tau)[W\chi,V_\e]-\frac{\tau^2}{\e^2} (W\chi,V_\e)_{L_2(\Om_\e)}
\\
=&(\nabla W\chi,\nabla V_\e)_{L_2(\Om_\e)} -\frac{\iu\tau}{\e} \left(\frac{\p W \chi}{\p x_1}, V_\e\right)_{L_2(\Om_\e)}
  + \frac{\iu \tau}{\e} \left(W\chi,\frac{\p V_\e}{\p x_1}\right)_{L_2(\Om_\e)} + b(W,V_\e)_{L_2(\Gp_\e)}
\\
= &-\int\limits_{\Gp_-} \overline{V}_\e \frac{\p W\chi}{\p x_2}\di x_1- (\D W\chi,V_\e)_{L_2(\Om_\e)}+ \frac{2\iu\tau}{\e} \left(W\chi,\frac{\p V_\e}{\p x_1}\right)_{L_2(\Om_\e)} + b(W,V_\e)_{L_2(\Gp_\e)}
\\
=&-(\D W\chi,V_\e)_{L_2(\Om_\e)}+ \frac{2\iu\tau}{\e} \left(W\chi,\frac{\p V_\e}{\p x_1}\right)_{L_2(\Om_\e)}+ (bW+K+\mu,V_\e)_{L_2(\Gp_\e)}.
\end{align*}
The last two equations imply
\begin{equation}\label{4.8}
\begin{aligned}
\hpe(\tau)&[V_\e,V_\e]-\frac{\tau^2}{\e^2}\|V_\e\|_{L_2(\Om_\e)}^2= -U_\e^{(\mu)}(0) (bW,V_\e)_{L_2(\Gp_\e)}
\\
&+ U_\e^{(\mu)}(0) (\D W\chi,V_\e)_{L_2(\Om_\e)}-\frac{2\iu\tau}{\e} U_\e^{(\mu)}(0) \left(W\chi, \frac{\p V_\e}{\p x_1}\right)_{L_2(\Om_\e)}.
\end{aligned}
\end{equation}
Lemma~\ref{lm4.2} and the inequalities (\ref{4.4}) give the estimate for $U_\e^{(\mu)}(0)$,
\begin{equation}\label{4.9}
|U_\e^{(\mu)}(0)|\leqslant 5(\pi\e)^{-1/2} \|f\|_{L_2(\Om_\e)}.
\end{equation}
In the same way as in Theorem~2.3 in \cite{BBC1}, we get the estimates for $W\chi$ and $\D W\chi$,
\begin{equation}\label{4.10}
\begin{aligned}
&\|W\chi\|_{L_2(\Om_\e)}\leqslant \|W\|_{L_2(\Om_\e)}\leqslant C\e^2(K+\mu),
\\
&\|\D W\chi\|_{L_2(\Om_\e)} \leqslant C(K+\mu) (\eta^{1/4}+\E^{-\e^{-1}}).
\end{aligned}
\end{equation}
Using the last three inequalities, we estimate the second and the third term in the right hand side of (\ref{4.8}) as
\begin{equation}
\begin{aligned}
&\left|U_\e^{(\mu)}(0)(\D W\chi,V_\e)_{L_2(\Om_\e)} -\frac{2\iu\tau}{\e} U_\e^{(\mu)}(0) \left(W\chi,\frac{\p V_\e}{\p x_1}\right)_{L_2(\Om_\e)}\right|
\\
&\hphantom{U_\e^{(\mu)}}\leqslant 5(\pi\e)^{-1/2} \|f\|_{L_2(\Om)} \Bigg(\|\D W\chi\|_{L_2(\Om_\e)} \|V_\e\|_{L_2(\Om_\e)}
\\
&\hphantom{U_\e^{(\mu)}\leqslant 5(\pi\e)^{-1/2} \|f\|_{L_2(\Om)} \Bigg(}+ 2\e^{-1} \|W\chi\|_{L_2(\Om_\e)}\left\|\frac{\p V_\e}{\p x_1}\right\|_{L_2(\Om_\e)}\Bigg)
\\
&\hphantom{U_\e^{(\mu)}}\leqslant \frac{\k}{2} \left\|\frac{\p V_\e}{\p x_1}\right\|_{L_2(\Om_\e)}^2 + \frac{1}{16} \|V_\e\|_{L_2(\Om_\e)}^2 + C\k^{-1}\e(K+\mu)^2 \|f\|_{L_2(\Om_\e)}^2.
\end{aligned}\label{4.11}
\end{equation}

It remains to estimate the first term in the right hand side of (\ref{4.8}). We employ the obvious inequality
\begin{equation*}
\|V_\e\|_{L_2(\Gp_\e)}\leqslant C\left\|\frac{\p V_\e}{
\p x_2}\right\|_{L_2(\Om_\e)}
\end{equation*}
(\ref{3.19}), and (\ref{4.9}) to obtain
\begin{align*}
\big|U_\e^{(\mu)}(0)& (bW,V_\e)_{L_2(\Gp_\e)}
\big|\leqslant C|U_\e^{(\mu)}(0)|\|bW\|_{L_2(\Gp_\e)} \|V_\e\|_{L_2(\Gp_\e)}
\\
&\leqslant C\e^{-1/2}\|f\|_{L_2(\Om_\e)} \left\|\frac{\p V_\e}{\p x_2}\right\|_{L_2(\Om_\e)} \left(\|W\|_{L_2(\Gp_\e\setminus\g_\e^{2\eta^{3/4}})} +\|W\|_{L_2(\Gp_\e\cap\g_\e^{2\eta^{3/4}})}
\right)
\\
&\leqslant \frac{1}{2}\left\|\frac{\p V_\e}{\p x_2}\right\|_{L_2(\Om_\e)}^2 + C\e(K+\mu)^2|\ln\e(K+\mu)|^2 \|f\|_{L_2(\Om_\e)}^2.
\end{align*}
We substitute this estimate and (\ref{4.11}) into (\ref{4.8}), and by (\ref{4.3}) it follows 
\begin{align*}
\k \left\|\frac{\p V_\e}{\p x_1}\right\|_{L_2(\Om_\e)}^2 + & \frac{1}{2} \left\|\frac{\p V_\e}{\p x_2}\right\|_{L_2(\Om_\e)}^2 + \frac{1}{8}\|V_\e\|_{L_2(\Om_\e)}^2 \leqslant \frac{\k}{2} \left\|\frac{\p V_\e}{\p x_1}\right\|_{L_2(\Om_\e)}^2
\\
&+ \frac{1}{16} \|V_\e\|_{L_2(\Om_\e)}^2 + C\e(K+\mu)^2 \left(\k^{-1}+|\ln\e(K+\mu)|^2\right)\|f\|_{L_2(\Om_\e)}^2.
\end{align*}
Therefore, 
\begin{equation*}
\|V_\e\|_{L_2(\Om_\e)}\leqslant C\e^{1/2}(K+\mu) \big(\k^{-1/2}
+ |\ln\e(K+\mu)|\big) \|f\|_{L_2(\Om_\e)}.
\end{equation*}
Together with the definition (\ref{4.12}) of $V_\e$, (\ref{4.9}), and (\ref{4.10}) it yields
\begin{align*}
\|U_\e-U_\e^{(\mu)}\|_{L_2(\Om_\e)} &=\|V_\e+U_\e^{(\mu)}(0)W\chi\|_{L_2(\Om_\e)}
\\
&\leqslant C\e^{1/2} (K+\mu) \big(\k^{-1/2} +|\ln\e(K+\mu)|+\e\big)\|f\|_{L_2(\Om_\e)}.
\end{align*}
We combine this inequality with (\ref{4.5}) and it completes the proof of (\ref{1.18}).

The proof of the asymptotics (\ref{1.15}), (\ref{1.17}) is similar to that of Theorem~2.4 in \cite{BBC1}; it is enough to use the proven asymptotics (\ref{1.18}) instead of Theorem~2.3 in \cite{BBC1}.

\section{Bottom of the essential spectrum}

In this section we prove Theorem~\ref{th1.5}.

We begin by proving the identity (\ref{1.19}). We first observe that the form $\hpe(\tau)$ associated with the operator $\Hpe(\tau)$ can be estimated from below as
\begin{equation*}
\hpe(\tau)[u,u]\geqslant \left\|\left(\iu\frac{\p}{\p
x_1}-\frac{\tau}{\e}\right)u\right\|_{L_2(\Om_\e)}^2+ \left\|\frac{\p
u}{\p x_2}\right\|_{L_2(\Om_\e)}^2-|b|\|u\|_{L_2(\Gp_-)}^2,
\end{equation*}
where the  form in the right hand side is on $\Hper^1(\Om_\e,\Gp_+)$. Hence, by the minimax principle we have
\begin{equation}\label{6.2}
\l_1(\e,\tau)\geqslant \inf\limits_{\genfrac{}{}{0 pt}{}{u\in\Hper^1(\Om_\e,\Gp_+)}{\|u\|_{L_2(\Om)}=1}
}\left(  \left\|\left(\iu\frac{\p}{\p
x_1}-\frac{\tau}{\e}\right)u\right\|_{L_2(\Om_\e)}^2+ \left\|\frac{\p
u}{\p x_2}\right\|_{L_2(\Om_\e)}^2 -|b|\|u\|_{L_2(\Gp_-)}^2\right).
\end{equation}
The infimum in the right hand side of this estimate is the lowest eigenvalue of the operator
\begin{equation*}
 \left(\iu\frac{\p}{\p
x_1}-\frac{\tau}{\e}\right)^2-\frac{\p^2}{\p x_2^2}\quad\text{in}\quad L_2(\Om_\e)
\end{equation*}
with periodic boundary condition on the lateral boundaries of $\Om_\e$, the Dirichlet condition on $\Gp_+$, and the Robin condition
\begin{equation*}
\left(\frac{\p}{\p x_2}-b\right)u=0\quad\text{on}\quad\Gp_-.
\end{equation*}
We find this eigenvalue by using the separation of variables and substitute it into (\ref{6.2}),
\begin{equation}\label{6.3}
\l_1(\e,\tau)\geqslant \frac{\tau^2}{\e^2}+T^2,
\end{equation}
where $T$ is the smallest nonnegative root  of the equation
\begin{equation*}
T\cos\pi T+b\sin\pi T=0.
\end{equation*}
It is clear that the first root of (\ref{1.17}) can be estimated uniformly in small $\mu$ as
\begin{equation*}
0\leqslant \sqrt{\L_1(\mu)}\leqslant c,\quad c=\mathrm{const}.
\end{equation*}
Thus, by (\ref{6.3}) for $|\tau|\geqslant (c+1)\e$, $\tau\in[-1,1)$ we have
\begin{equation*}
\l_1(\e,\tau)\geqslant (\sqrt{\L_1(\mu)}+1)^2\geqslant\L_1(\mu)+1.
\end{equation*}
Since by (\ref{1.15})
\begin{equation*}
\l_1(\e,0)=\L_1(\mu)+\Odr(\e^{1/2}),\quad \e\to+0,
\end{equation*}
we conclude that for sufficiently small $\e$ we  have
\begin{equation*}
\inf\limits_{\tau\in[-1,1)} \l_1(\tau,\e)= \inf\limits_{|\tau|\leqslant (c+1)\e} \l_1(\tau,\e).
\end{equation*}
Relation (\ref{1.19}) is now proven by the arguments used in \cite[Sec.5]{BC} 
and based on Temple inequalities.

The rest of the section is devoted to construction of the asymptotic expansion for $\l_1(0,\e)$. Here we employ the approach suggested in \cite{AHP-4}, \cite{AHP-23}, \cite{AHP-25}, \cite{AHP-24}.

We write the boundary value problem for the eigenvalue $\l_1(0,\e)$ and the associated eigenfunction $\po(x,\e)$,
\begin{equation}\label{6.4}
\begin{gathered}
-\D\po=\l_1(0,\e)\po\quad\text{in}\quad \Om_\e,
\\
\po=0\quad\text{on}\quad \Gp_+\cup\gp_\e,\qquad \left(\frac{\p
}{\p x_2}-b\right)\po=0\quad \text{on}\quad \Gp_\e,
\end{gathered}
\end{equation}
and on the lateral boundaries of $\Om_\e$ the periodic boundary conditions are assumed. The asymptotics for $\l_1(0,\e)$ is constructed as
\begin{equation}\label{6.5}
\l_1(0,\e)=\L(\mu,\e),
\end{equation}
where the function $\L(\mu,\e)$ is to be determined. By (\ref{1.15}) it should satisfy the identity
\begin{equation}\label{6.1}
\L(\e,\mu)=\L_1(\mu)+\Odr\big(\e^{1/2}(K+\mu)\big), \quad\e\to+0, \quad \mu\to+0.
\end{equation}

The asymptotics for $\po$ is constructed by the combination of the boundary layer method \cite{VL} and the method of matching asymptotic expansions \cite{Il}. It is sought as a sum of an outer expansion, a boundary layer, and the inner expansion. The outer expansion is defined as
\begin{equation}\label{6.6}
\pex(x,\L)=\sin\sqrt{\L(\mu,\e)}(x_2-\pi).
\end{equation}
This function is periodic w.r.t. $x_1$ 
and satisfies the equation and the boundary condition on $\Gp_+$ in (\ref{6.4}) 
no matter how the function $\L$ looks like.

The boundary layer is constructed in terms of the rescaled variables $\xi$, and we denote it as $\pbl(\xi,\mu)$. The main idea of using the boundary layer is to satisfy the required boundary condition on $\Gp_\e$ and thus
\begin{equation*}
\left(\frac{\p}{\p x_2}-b\right)(\pex+\pbl)=0 \quad\text{on}\quad \Gp_\e.
\end{equation*}
We substitute (\ref{6.6}) into this condition, rewrite it in the variables $\xi$, and pass to the limit as $\eta\to+0$. It leads us to one more boundary condition,
\begin{gather}\label{6.7}
\left(\frac{\p}{\p \xi_2}-\e b\right) \pbl=-\e(\sqrt{\L}\cos\sqrt{\L}\pi+b\sin\sqrt{\L}\pi)\quad \text{on}\quad \Gp^0,
\\
\Gp^{0}:=\left\{\xi:
0<|\xi_1|<\frac{\pi}{2},\,\xi_2>0\right\}.\nonumber
\end{gather}
To obtain the equation for $\pbl$, we substitute it and (\ref{6.5}) into (\ref{6.4}) and pass to the variables $\xi$. It yields the boundary value problem
\begin{equation}\label{6.8}
-\D_\xi \psi_\e^{bl}=\e^2\L\psi_\e^{bl},\quad \xi\in\Pi,\qquad
\Pi:=\left\{\xi: |\xi_1|<\frac{\pi}{2},\ \xi_2>0\right\},
\end{equation}
and on the lateral boundaries of $\Pi$ the periodic boundary condition is imposed.

By $\mathfrak{V}$ we denote the space of $\pi$-periodic, even in
$\xi_1$ functions belonging to
$C^\infty(\overline{\Pi}\setminus\{0\})$ and exponentially decaying
as $\xi_2\to+\infty$, together with all their derivatives, uniformly
in $\xi_1$. It is easy to check that the function $X$ introduced in (\ref{3.37}) belongs to $\mathfrak{V}$. We state the next lemma that was proven in \cite{BBC1}.

\begin{lemma}\label{lm6.1}
The function $X$ can be represented as the series
\begin{equation}\label{6.9}
X(\xi)=-\sum\limits_{n=1}^{\infty}
\frac{1}{n}\E^{-2n\xi_2}\cos2n\xi_1,
\end{equation}
which converges in $L_2(\Pi)$ and in $C^k(\overline{\Pi}\cap\{\xi:
\xi\geqslant R\})$ for each $k\geqslant 0$, $R>0$.
\end{lemma}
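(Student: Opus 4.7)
The plan is to derive the series identity by rewriting $X$ via the complex logarithm in a form where the standard Taylor expansion of $\ln(1-w)$ applies, then to verify the two stated modes of convergence separately by direct estimates.

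First I would manipulate the defining formula $X(\xi)=\RE\ln\sin(\xi_1+\iu\xi_2)+\ln 2-\xi_2$. Writing
\begin{equation*}
\sin(\xi_1+\iu\xi_2)=\frac{\iu}{2}\,\E^{\xi_2-\iu\xi_1}\bigl(1-\E^{-2\xi_2+2\iu\xi_1}\bigr),
\end{equation*}
and taking moduli, one obtains $\ln|\sin(\xi_1+\iu\xi_2)|=\xi_2-\ln 2+\ln|1-\E^{-2\xi_2+2\iu\xi_1}|$. Since $\RE\ln\sin=\ln|\sin|$, the additional terms cancel and
\begin{equation*}
X(\xi)=\ln\bigl|1-\E^{-2\xi_2+2\iu\xi_1}\bigr|=\RE\ln\bigl(1-\E^{-2\xi_2+2\iu\xi_1}\bigr).
\end{equation*}
For $\xi_2>0$ we have $|\E^{-2\xi_2+2\iu\xi_1}|=\E^{-2\xi_2}<1$, so the Taylor series $\ln(1-w)=-\sum_{n\geqslant 1} w^n/n$ applies. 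Taking real parts term by term, using $\RE(\E^{-2n\xi_2+2\iu n\xi_1})=\E^{-2n\xi_2}\cos 2n\xi_1$, gives the claimed expansion (\ref{6.9}) pointwise for $\xi_2>0$.

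Next I would verify $L_2(\Pi)$-convergence. The key ingredients are the orthogonality $\int_{-\pi/2}^{\pi/2}\cos 2n\xi_1\cos 2m\xi_1\,d\xi_1=\tfrac{\pi}{2}\delta_{nm}$ and $\int_0^\infty \E^{-2(n+m)\xi_2}\,d\xi_2=\frac{1}{2(n+m)}$, which together yield
\begin{equation*}
\Big\|\tfrac{1}{n}\E^{-2n\xi_2}\cos 2n\xi_1\Big\|_{L_2(\Pi)}^2=\frac{\pi}{8n^3},
\end{equation*}
with cross terms vanishing. Hence the partial sums form a Cauchy sequence in $L_2(\Pi)$. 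Since $X$ itself belongs to $L_2(\Pi)$ (its only singularities are the logarithmic ones at $(\pi j,0)$ described in (\ref{3.13}), and it decays exponentially as $\xi_2\to+\infty$ by the structure of $\mathfrak{V}$), and since the pointwise limit on $\{\xi_2>0\}$ is $X$, the $L_2$ limit must also be $X$.

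Finally, for the $C^k$-convergence on $\overline{\Pi}\cap\{\xi_2\geqslant R\}$ I would invoke the Weierstrass $M$-test: for any multi-index with $|\alpha|+|\beta|\leqslant k$,
\begin{equation*}
\Big|\p_{\xi_1}^{\alpha}\p_{\xi_2}^{\beta}\tfrac{1}{n}\E^{-2n\xi_2}\cos 2n\xi_1\Big|\leqslant (2n)^{k-1}\E^{-2n\xi_2}\leqslant (2n)^{k-1}\E^{-2nR},
\end{equation*}
and $\sum_n n^{k-1}\E^{-2nR}<\infty$ for every $R>0$. Uniform convergence of the series of derivatives then gives $C^k$-convergence to $X$ on the stated set. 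No step presents a serious obstacle here; the only mild subtlety is justifying that the $L_2$-limit coincides with $X$, which is handled by combining the pointwise convergence on the open half-strip with membership $X\in L_2(\Pi)$ from the singularity analysis (\ref{3.13}) and the exponential decay.
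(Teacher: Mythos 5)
Your proof is correct. The paper itself does not reproduce a proof of this lemma — it simply cites \cite{BBC1} — so there is nothing to compare against line by line; but your argument is self-contained and sound. The factorization $\sin(\xi_1+\iu\xi_2)=\tfrac{\iu}{2}\E^{\xi_2-\iu\xi_1}\bigl(1-\E^{-2\xi_2+2\iu\xi_1}\bigr)$ correctly collapses $X$ to $\ln\bigl|1-\E^{-2\xi_2+2\iu\xi_1}\bigr|$, after which the power series of $\ln(1-w)$ gives (\ref{6.9}); the $L_2(\Pi)$ estimate $\|\tfrac{1}{n}\E^{-2n\xi_2}\cos 2n\xi_1\|_{L_2(\Pi)}^2=\pi/(8n^3)$ with orthogonality of distinct terms is right; and the $M$-test bound $n^{k-1}\E^{-2nR}$ on $\{\xi_2\geqslant R\}$ (which is plainly what the paper's typo ``$\xi\geqslant R$'' means) gives the $C^k$ convergence. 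The one step worth spelling out a touch more, as you acknowledge, is identifying the $L_2$ limit with $X$: $L_2$ convergence yields a subsequence converging a.e., and the pointwise convergence on $\xi_2>0$ then forces the limit to equal $X$ a.e., using that the logarithmic singularity at the origin is square-integrable in two dimensions. That closes the argument completely.
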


\begin{lemma}\label{lm6.2}
For small   $\b$ the problem
\begin{equation}\label{6.10}
-\D_\xi Z-\b^2 Z=0\quad\text{in}\quad\Pi,
\quad
\left(\frac{\p}{\p\xi_2}-\e b\right)Z=0\quad\text{on}\quad \Gp^0,
\end{equation}
has an even periodic in $\xi_1$ solution in $\mathfrak{V}$. This solution and
all its derivatives w.r.t. $\xi$ decay exponentially as
$\xi_2\to+\infty$ uniformly in $\xi_1$, $\e$, and $\b$. The differentiable
asymptotics
\begin{equation}\label{6.11}
\begin{aligned}
Z(\xi,\e,\b)=&\E^{\e b\xi_2}\left(\ln|\xi|+\ln 2+\tht(\e b,\b^2)-(1+\e b)\xi_2+\e b\xi_1\left(\vp-\frac{\pi}{2}\right)
\right)
\\
&+\Odr(|\xi|^2\ln|\xi|),\quad\xi\to0,
\end{aligned}
\end{equation}
holds true uniformly in $\b$ and $\e$, where  $\vp$ is the polar angle associated with $\xi$, and the function $\tht$ is defined in (\ref{1.28}). The function $Z$ is bounded in
$L_2(\Pi)$ uniformly in  $\e$ and $\b$.
The function $\tht(t_1,t_2)$ is jointly holomorphic in $t_1$ and $t_2$.
\end{lemma}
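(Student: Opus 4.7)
The plan is to construct $Z$ explicitly as a Fourier series in $\xi_1$, then read off all properties (including the formula for $\tht$) from the resulting series.

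First I would look for $Z$ in the form
\begin{equation*}
Z(\xi,\e,\b) = \sum_{n=1}^\infty a_n(\e,\b)\, e^{-\sqrt{4n^2-\b^2}\,\xi_2}\cos(2n\xi_1).
\end{equation*}
Each term manifestly satisfies the Helmholtz equation, and the chosen decaying exponential guarantees exponential decay of $Z$ (and all its derivatives) as $\xi_2\to+\infty$, uniformly in $\xi_1$, $\e$, $\b$. Evenness and $\pi$-periodicity in $\xi_1$, as well as smoothness on $\overline{\Pi}\setminus\{0\}$, are built in. The zero mode is excluded since the associated ODE admits no decaying solutions for small $\b$.

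The coefficients $a_n$ are pinned down by two requirements: (i) the logarithmic singularity at the origin with unit coefficient, forcing $a_n\sim -1/n$ as $n\to\infty$ (by comparison with the formula for $X$ from Lemma~\ref{lm6.1}), and (ii) the Robin condition on $\Gp^0$. A direct computation using Lemma~\ref{lm6.1} and the elementary Abel-sum identity $\sum_{n\geq 1}\cos(2n\xi_1) = -\tfrac12$ on $(0,\pi)$ leads to the explicit choice
\begin{equation*}
a_n(\e,\b) = -\frac{2}{\sqrt{4n^2-\b^2}+\e b},
\end{equation*}
from which I would expand each Fourier mode in powers of $\xi_2$ near the origin and sum the resulting series. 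The leading logarithmic and constant parts come out as $\ln|\xi|+\ln 2$ plus the summed correction
\begin{equation*}
\tht(\e b,\b^2) = \sum_{n=1}^\infty\left(\frac{1}{n}-\frac{2}{\sqrt{4n^2-\b^2}+\e b}\right),
\end{equation*}
which, after rationalizing $\sqrt{4n^2-\b^2}-2n$ and combining fractions, coincides with the representation (\ref{1.28}). The subleading linear and angular terms $-(1+\e b)\xi_2+\e b\xi_1(\vp-\pi/2)$ in (\ref{6.11}) are produced by the first-order $\xi_2$-expansion of $e^{-\sqrt{4n^2-\b^2}\xi_2}$ together with the summation rule that converts the $\cos(2n\xi_1)$-series into the polar coordinate expression with angular dependence through $\vp$.

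Joint holomorphy of $\tht(t_1,t_2)$ in a neighborhood of the origin follows because each summand is holomorphic in $(t_1,t_2)$ (for $|t_2|<4$) and of size $O(n^{-3})$ uniformly on compacta, so the series converges uniformly. The $L_2(\Pi)$-bound of $Z$ uniform in $\e,\b$ follows from Parseval: $\|Z\|_{L_2(\Pi)}^2$ is equivalent to $\sum |a_n|^2/\sqrt{4n^2-\b^2}=O(\sum n^{-3})$. Uniqueness — which secures that the Fourier-series construction yields \emph{the} function of the lemma — comes from an energy argument applied to the difference of any two candidates in $\mathfrak{V}$.

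The main technical obstacle is the careful identification of all three subleading pieces of (\ref{6.11}) from the Fourier series with control of the remainder at order $O(|\xi|^2\ln|\xi|)$ and the proof that the expansion is differentiable: this requires summing the tails of $\sum a_n (e^{-\sqrt{4n^2-\b^2}\xi_2}-1+\sqrt{4n^2-\b^2}\xi_2)\cos(2n\xi_1)$ with uniform $(\e,\b)$-control, and handling the angular term $\e b\xi_1(\vp-\pi/2)$ whose appearance reflects the failure of the linearized Robin datum $-(1+\e b)$ to be locally constant at the corner $\xi=0$.
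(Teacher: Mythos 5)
Your Fourier–series ansatz is precisely what the paper's proof also relies on: the solution it constructs is exactly $Z=-\sum_{n\geq 1}\frac{2}{\sqrt{4n^2-\b^2}+\e b}\,\E^{-\sqrt{4n^2-\b^2}\,\xi_2}\cos 2n\xi_1$ (its (6.16)), and your candidate for $\tht$ is correct — writing $a=\sqrt{4n^2-\b^2}$, $c=2n$, $t=\e b$ one checks that $\tfrac1n-\tfrac{2}{a+t}=\tfrac{a+t-c}{n(a+t)}$, while the $n$-th summand of (\ref{1.28}) equals $\tfrac{1}{n}\cdot\tfrac{(a+t)^2-c^2}{(a+t)(a+t+c)}=\tfrac{a+t-c}{n(a+t)}$, so the two series termwise agree. (You have also implicitly corrected a misprint: for (\ref{6.11}) to hold and for $\pbl=\e(\sqrt\L\cos\sqrt\L\pi+b\sin\sqrt\L\pi)Z$ to solve (\ref{6.7}), the Robin datum in (\ref{6.10}) must be $-1$, not $0$; your coefficients are consistent with the correct datum.)

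Where your sketch is genuinely incomplete is exactly where you flag "the main technical obstacle." Comparing $Z$ with $X$ termwise is only continuous at the origin: the gradient series $\sum\big(\tfrac{2\sqrt{4n^2-\b^2}}{\sqrt{4n^2-\b^2}+\e b}-2\big)\E^{\cdots}\cos\cdots=-\sum\tfrac{2\e b}{\sqrt{4n^2-\b^2}+\e b}\E^{\cdots}\cos\cdots$ still carries a residual logarithmic singularity $\sim-\e b\,\ln|\xi|$, so "expanding each Fourier mode in powers of $\xi_2$ and summing" does not deliver the linear and angular terms of (\ref{6.11}), nor the differentiable $\Odr(|\xi|^2\ln|\xi|)$ remainder, without a further regularization. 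The paper handles this with the change of variables (\ref{6.14}), $Z=\E^{\e b\xi_2}\bigl(\widetilde Z+X-\e b\xi_2X-\e b\int_{\xi_2}^{\infty}X\,dt\bigr)$, which turns the Robin problem into a Neumann one for $\widetilde Z$ and makes $\widetilde Z$ satisfy the corner regularity $\widetilde Z(\xi)=\widetilde Z(0)+\Odr(|\xi|^2\ln|\xi|)$ known from \cite{AHP-22}; $\widetilde Z(0)$ is then computed by a Green-identity integration against $X$, using the same two Fourier series. That substitution — or an equivalent device that strips off the $\E^{\e b\xi_2}$ and the $X$-weighted terms before you expand — is the piece your plan still needs in order to extract the subleading coefficients rigorously and uniformly in $\e,\b$.

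Everything else you write is sound: the $L_2$ bound by Parseval, joint holomorphy of $\tht$ from uniform convergence of the holomorphic summands, the exclusion of the $n=0$ mode, and the choice of decaying exponentials giving the uniform exponential decay of $Z$ and all its derivatives.
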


\begin{proof}
We make the change
\begin{equation}\label{6.14}
Z(\xi,\e,\b)=\E^{\e b\xi_2} \left(\widetilde{Z}(\xi,\e,\b)+X(\xi)-\e b \xi_2 X-\e b\int\limits_{\xi_2}^{+\infty} X(\xi_1,t)\di t\right)
\end{equation}
in the problem (\ref{6.10}),
\begin{align}
&\left(-\D-2\e b\frac{\p}{\p \xi_2}-\e^2 b^2-\b^2\right) \widetilde{Z}=F\quad\text{in}\quad\Pi,\qquad
\frac{\p \widetilde{Z}}{\p\xi_2}=0\quad\text{on}\quad \Gp^0,\label{6.13}
\\
&F:=(\e^2 b^2+\b^2)X-2\e^2 b^2 \xi_2\frac{\p X}{\p\xi_2}
-\e b (\e^2 b^2+\b^2) \left(\xi_2 X+\int\limits_{\xi_2}^{+\infty} X(\xi_1,t)\di t\right),\nonumber
\end{align}
and on the lateral boundaries of $\Pi$ we have the periodic boundary conditions.

Let $\mathfrak{V}$ be the orthogonal complement in $L_2(\Pi)$ to the set of the functions $\phi=\phi(\xi_2)$ belonging to $L_2(\Pi)$. By $\mathfrak{W}$ we denote the Hilbert space of functions in $\H^2(\Pi)\cap\mathfrak{V}$ satisfying periodic boundary conditions on the lateral boundaries of $\Pi$, the Neumann boundary condition on $\Gp^0$. 
In $\mathfrak{V}$ we introduce the operator $\mathcal{B}$ in acting as $-\D$ and on the domain $\mathfrak{W}$. This operator is self-adjoint.

It was shown in the proof of Lemma~5.2 in \cite{BBC1} that $\mathcal{B}\geqslant $ and hence the inverse exists and is bounded.   It follows that $\mathcal{B}^{-1}$ is also bounded as an operator from $\mathfrak{V}$ into $\H^1(\Pi)\cap\mathfrak{V}$. Hence, for sufficiently small $\e$ and $\b$, we have
\begin{equation*}
\left(\mathcal{B}-2\e b\frac{\p}{\p \xi_2}-\e^2 b^2-\b^2\right)^{-1} = \mathcal{B}^{-1} \left(\I-\left(2\e b\frac{\p}{\p \xi_2}+\e^2 b^2+\b^2\right)\mathcal{B}^{-1}
\right)^{-1},
\end{equation*}
where we have beared in mind that the operator $\frac{\p}{\xi_2}$ maps $\H^1(\Pi)\cap\mathfrak{V}$ into $\mathfrak{V}$.

Thus, the solution to the problem (\ref{6.13}) reads as
\begin{equation*}
\widetilde{Z}=\left(\mathcal{B}-2\e b\frac{\p}{\p \xi_2}-\e^2 b^2-\b^2\right)^{-1}F.
\end{equation*}
By the standard smoothness improving theorems we conclude that $\widetilde{Z}\in C^\infty(\widetilde{\Pi}\setminus\{0\})$.
The function $Z$ can be found from (\ref{6.14}),
\begin{equation*}
Z=\E^{\e b\xi_2} \left(\left(\mathcal{B}-2\e b\frac{\p}{\p \xi_2}-\e^2 b^2-\b^2\right)^{-1}F+X-\e b \xi_2 X-\e b\int\limits_{\xi_2}^{+\infty} X(\xi_1,t)\di t\right).
\end{equation*}
We can obtain one more representation for $Z$ by the separation of variables. In order to do it, we construct $Z$ as
\begin{equation*}
Z(\xi,\e,\b)=\widehat{Z}(\xi,\e,\b)+\E^{\e b\xi_2}X(\xi)
\end{equation*}
and then we separate the variables for $\widehat{Z}$. It implies
\begin{equation}\label{6.16}
Z(\xi,\e,\b)=-\sum\limits_{n=1}^{\infty}  \frac{2 \E^{-\sqrt{4n^2-\b^2}\xi_2} }{\sqrt{4n^2-\b^2}+\e b} \cos 2n\xi_1.
\end{equation}
As in the proof of Lemma~5.1 in \cite{BBC1}, we check that this series converges in $L_2(\Pi)$ and in $C^k(\overline{\Pi}\cap\{\xi: \xi_2\geqslant R\})$ for each $k\geqslant 0$, $R>0$. It implies that the function $Z$ and all its derivatives decay exponentially as $\xi_2\to+\infty$ uniformly in $\xi_1$, $\e$, and  $\b$. Hence, $Z\in\mathfrak{V}$ and for sufficiently small $\e$ and $\b$
\begin{align*}
\|Z\|_{L_2(\Pi)}^2=&\sum\limits_{n=1}^{\infty} \frac{2\pi}{(\sqrt{4n^2-\b^2}+\e b)^2}\int\limits_{0}^{+\infty}
\E^{-2\sqrt{4n^2-\b^2}\xi_2}\di \xi_2
\\
=&\sum\limits_{n=1}^{\infty} \frac{2\pi}{(\sqrt{4n^2-\b^2}+\e b)^2\sqrt{4n^2-\b^2}} \leqslant \sum\limits_{n=1}^{\infty} \frac{8\pi}{(4n^2-1)^{3/2}}.
\end{align*}
Hence, the function $Z$ is bounded in $L_2(\Pi)$ uniformly in $\e$ and $\b$.

By analogy with Lemma~3.2 in \cite{AHP-22} one can prove that
\begin{equation}\label{6.17}
\widetilde{Z}(\xi,\e,\b)=\widetilde{Z}(0,\e,\b)+\Odr(|\xi|^2\ln|\xi|),\quad \xi\to0,
\end{equation}
uniformly in small $\e$ and $\b$. Let us calculate $\widetilde{Z}(0,\e,\b)$.

We employ the asymptotics (\ref{3.13}) with $j=0$ and (\ref{6.17}) and we integrate by parts as follows,
\begin{equation*}
\int\limits_{\Pi} X\D \widetilde{Z}\di\xi=\lim\limits_{\d\to0} \int\limits_{\genfrac{}{}{0 pt}{}{|\xi|=\d,}{\xi_2>0}} \left(
\widetilde{Z} \frac{\p X}{\p|\xi|}-X\frac{\p \widetilde{Z}}{\p|\xi|}\right)\di s=\pi \widetilde{Z}(0,\e,\b).
\end{equation*}
Hence, by (\ref{6.14}) and the equation in (\ref{6.13}) we get
\begin{align*}
\widetilde{Z}(0,\e,\b)=&-\frac{1}{\pi} \int\limits_{\Pi} X\left(\left(2\e b\frac{\p}{\p\xi_2}+\e^2 b^2+\b^2\right)\widetilde{Z} +F\right)\di\xi
\\
=&-\frac{1}{\pi} \int\limits_{\Pi} X \left(
\left(2\e b\frac{\p}{\p\xi_2}+\e^2 b^2+\b^2\right)\E^{-\e b\xi_2} Z -2\e b\frac{\p X}{\p\xi_2}\right)\di\xi.
\end{align*}
We substitute the series (\ref{6.9}), (\ref{6.16}) into the obtained equation,
\begin{equation}\label{6.18}
\begin{aligned}
\widetilde{Z}(0,\e,\b)= &\sum\limits_{n=1}^{\infty} \frac{1}{n} \int\limits_{0}^{+\infty}  \bigg(
\frac{2\e b\sqrt{4n^2-\b^2}+\e^2 b^2-\b^2}{\sqrt{4n^2-\b^2}+\e b}\E^{-\xi_2\big(\sqrt{4n^2-\b^2}+2n+\e b\big)}
\\
&\hphantom{\sum\limits_{n=1}^{\infty} \frac{1}{n} \int\limits_{0}^{+\infty}  \bigg( ++ }-2\e b\E^{-4n\xi_2} \bigg)\di\xi_2
\\
=&-\frac{\e b}{2}\sum\limits_{n=1}^{\infty} \frac{1}{n^2} + 2\e b\sum\limits_{n=1}^{\infty} \frac{1}{n\big(\sqrt{4n^2-\b^2}+2n+\e b\big)}
\\
&-(\b^2+\e^2 b^2)
\sum\limits_{n=1}^{\infty}\frac{1}{n\big(\sqrt{4n^2-\b^2}+\e b\big)\big(\sqrt{4n^2-\b^2}+2n+\e b\big)}
\\
=& -\frac{\pi^2\e b}{12}+
\tht(\e b,\b^2).
\end{aligned}
\end{equation}

In view of (\ref{6.14}), to prove (\ref{6.11}), it is required to study also the behavior of the function $\int\limits_{\xi_2}^{+\infty} X(\xi_1,t)\di t$ as $\xi\to0$. We have
\begin{equation}
\int\limits_{\xi_2}^{+\infty} X(\xi_1,t)\di t=\int\limits_{0}^{+\infty} X(\xi_1,t)\di t - \int\limits_{0}^{\xi_2} X(\xi_1,t)\di t.\label{6.19}
\end{equation}
It follows directly from the definition (\ref{3.37}) of $X$ and (\ref{3.36}) that
\begin{align*}
&\frac{d^2}{d\xi_1^2} \int\limits_{0}^{+\infty} X(\xi_1,t)\di t= -\int\limits_{0}^{+\infty} \frac{\p^2}{\p t^2}X(\xi_1,t)\di t=-1,\quad \xi_1\not=0,
\\
&\int\limits_{0}^{+\infty} X\left(\pm\frac{\pi}{2},t\right)\di t=\frac{\pi^2}{24},\quad \int\limits_{0}^{+\infty} \frac{\p X}{\p\xi_2}\left(\pm\frac{\pi}{2},t\right)\di t=0,
\end{align*}
and thus
\begin{equation*}
\int\limits_{0}^{+\infty} X(\xi_1,t)\di t=\frac{\pi^2}{24}-\frac{1}{2}\left(\xi_1\mp \frac{\pi}{2}\right)^2,\quad\pm\xi_1\geqslant 0.
\end{equation*}
The asymptotics (\ref{3.13}) implies
\begin{equation*}
\int\limits_{0}^{\xi_2} X(\xi_1,t)\di t= \xi_2\ln|\xi|+(\ln 2-1)\xi_2+\xi_1\arctan\frac{\xi_2}{\xi_1}+\Odr(|\xi|^2),\quad \xi\to0.
\end{equation*}
The two last equations, (\ref{6.19}), and (\ref{3.13}) yield
\begin{equation*}
\xi_2 X(\xi)+\int\limits_{\xi_2}^{+\infty} X(\xi_1,t)\di t=-\frac{\pi^2}{12} +\xi_1\left(\frac{\pi}{2}-\vp\right)-\xi_2+\Odr(|\xi|^2),\quad\xi\to0.
\end{equation*}
This identity, (\ref{6.14}), and (\ref{6.18}) lead us to (\ref{6.11}). The function $\tht$ is jointly holomorphic in $t_1$ and $t_2$ by the first Weierstrass theorem since the terms of series (\ref{1.28}) are jointly holomorphic in $t_1$ and $t_2$ and these series converges uniformly in small $t_1$ and $t_2$.
\end{proof}

The proven lemma allows us to construct the needed solution to the problem (\ref{6.8}), (\ref{6.7}). Namely, we have
\begin{equation*}
\pbl(\xi,\e,\L)=\e  (\sqrt{\L}\cos\sqrt{\L}\pi+b\sin\sqrt{\L}\pi)  Z(\xi,\e,\e\sqrt{\L}).
\end{equation*}
By the definition (\ref{6.6}) of $\pex$, it has the asymptotics
\begin{align*}
\pex(x,\L)=&-\sin\sqrt{\L}\pi+\sqrt{\L}x_2\cos\sqrt{\L}\pi +\Odr(x_2^2)
\\
=&\ \E^{b x_2}\left(-\sin\sqrt{\L}\pi +\big(\sqrt{\L}\cos\sqrt{\L}\pi+b \sin\sqrt{\L}\pi\big)x_2
\right),\quad x_2\to+0.
\end{align*}
The asymptotics of $\pbl$ as $\xi\to 0 $ can be easily found by (\ref{6.11}), and, as $\xi\to 0$, we have
\begin{align}
&
\begin{aligned}
\pex(x,\L)+\pbl(\xi,\e,\L)=&\E^{\e \eta b\vs_2}\big(\Psin_0(\vs,\e,\L)+\e\eta\, \Psin_1(\vs,\e,\L)\big)
\\
&+\Odr\left(\e\eta^2|\vs^{(j)}|^2\big(\big|\ln|\vs|\big|+\e^{-1}(K+\mu)\big)\right),
\end{aligned}\nonumber 
\\
&
\begin{aligned}
\Psin_0(\vs,\e,\L)=&\e\big(\sqrt{\L}\cos\sqrt{\L}\pi+b\sin\sqrt{\L}\pi\big) (\ln|\vs|+\ln 2)
\\
&+\big(\sqrt{\L}\cos\sqrt{\L}\pi+b\sin\sqrt{\L}\pi\big)\big(\e\tht(\e b,\e^2\L)-(K+\mu)^{-1}\big)
\\
&-\sin\sqrt{\L}\pi,
\end{aligned}
\label{6.26}
\\
&
\begin{aligned}
\Psin_1(\vs,\e,\L)=& \e b\big(\sqrt{\L}\cos\sqrt{\L}\pi+b\sin\sqrt{\L}\pi\big) \left(\vs_1\left(\vp-\frac{\pi}{2}\right)-\vs_2\right),
\end{aligned}\label{6.27}
\end{align}
where $\vs=\xi\eta^{-1}$.
In accordance with the method of matching asymptotic expansions we construct the inner expansion for $\po$ in a vicinity of the point $x=0$ as
\begin{equation}\label{6.28}
\mathring{\psi}_\e^{\mathrm{in},j}(\vs,\L)=\E^{\e\eta b\vs_2}\big(\psin_0(\vs,\e,\L)+ \e \eta\,\psin_1(\vs,\e,\L)\big),
\end{equation}
where the functions $\psi_i$, $i=0,1$, must behave at infinity as
\begin{equation}\label{6.29}
\psin_i(\vs,\e,\L)=\Psin_i(\vs,\e,\L)+\Odr(|\vs|^i),\quad \vs\to\infty.
\end{equation}
We substitute the ansatz (\ref{6.28}) into the boundary value problem (\ref{6.3}) and equate the coefficients at the like powers of $\eta$. It implies the boundary value problems for $\psin_i$,
\begin{align}
&\D\psin_0=0,\hphantom{-b\frac{\p\psin_0}{\p\vs_2}} \quad\vs_2>0,\qquad \psin_0=0,\quad\vs\in\go,\quad\frac{\p\psin_0}{\p\vs_2}=0,\quad \vs\in\Go,\label{6.30}
\\
&\D\psin_1=-2b\frac{\p\psin_0}{\p\vs_2}, \quad\vs_2>0,\qquad \psin_1=0,\quad\vs\in\go,\quad\frac{\p\psin_1}{\p\vs_2}=0,\quad \vs\in\Go,\label{6.30a}
\end{align}
where $\go:=\{\vs: |\vs_1|<1,\ \vs_2=0\}$, $\Go:=O\vs_1\setminus\overline{\go}$. The problem (\ref{6.30}) has the only bounded at infinity solution which is trivial. The solution behaving at infinity as $\ln|\vs|$ is also unique and it is the function $Y$ introduced in (\ref{3.42}). Hence,
\begin{equation*}
\psin_0(\vs,\e,\L)=\e\big(\sqrt{\L}\cos\sqrt{\L}\pi+b\sin\sqrt{\L}\pi\big) Y(\vs).
\end{equation*}
The asymptotics of this function as $\vs\to\infty$ can be found by using (\ref{3.15}). Comparing this asymptotics with (\ref{6.26}), (\ref{6.29}), we arrive at the equation
\begin{equation*}
 \big(\sqrt{\L}\cos\sqrt{\L}\pi+b\sin\sqrt{\L}\pi\big) \left(\e\tht(\e b,\e^2\L)-(K+\mu)^{-1}\right) -\sin\sqrt{\L}\pi=0,
\end{equation*}
which can be rewritten as (\ref{1.31}). In the same way how similar equation (2.15) was studied in \cite{BBC1}, one can prove easily that the equation (\ref{1.31}) has the unique root satisfying (\ref{6.1}). This root is jointly holomorphic in $\e$ and $\mu$ and it can be represented as the convergent series (\ref{1.32}). To calculate explicitly its coefficients, it is sufficient to substitute this series into the equation (\ref{1.31}) and expand it into the Taylor series w.r.t. $\e$, set the coefficients at the powers of $\e$ equal to zero, and solve the equations obtained. Exactly in this way, one can check the formulas (\ref{1.33}).

To solve the problem (\ref{6.30a}), (\ref{6.27}), we need an auxiliary lemma.

\begin{lemma}\label{lm6.3}
The problem
\begin{equation}\label{6.31}
\D Y_1=-2\frac{\p Y}{\p\vs_2}, \quad\vs_2>0,\qquad Y_1=0,\quad\vs\in\go,\quad\frac{\p Y_1}{\p\vs_2}=0,\quad \vs\in\Go,
\end{equation}
has a solution with the differentiable asymptotics
\begin{equation}\label{6.32}
Y_1(\vs)= \vs_1\left(\vp-\frac{\pi}{2}\right)-\vs_2+c+\Odr(|\vs|^{-1}),\quad \vs\to\infty,
\end{equation}
where $c$ is a constant. This solution belongs to $\H^1(Q)\cap C^\infty(\{\vs: \vs_2\geqslant 0\}\setminus\{\vs: \vs_1=\pm 1,\ \vs_2=0\})$, where $Q$ is any bounded subdomain of $\{\vs: \vs_2\geqslant 0\}$.
\end{lemma}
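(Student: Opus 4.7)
I plan to write $Y_1 = Y_1^{(0)} + Y_2$, where the ansatz
\begin{equation*}
Y_1^{(0)}(\vs) := \chi(|\vs|)\bigl(\vs_1(\vp - \tfrac{\pi}{2}) - \vs_2\bigr)
\end{equation*}
builds in the target asymptotic, with $\chi\in C^\infty([0,\infty))$ equal to $0$ for $|\vs|<1$ and $1$ for $|\vs|>2$. A direct computation using $\p_{\vs_1}\vp = -\vs_2/|\vs|^2$ and $\p_{\vs_2}\vp = \vs_1/|\vs|^2$ gives $\D\bigl(\vs_1(\vp-\tfrac{\pi}{2}) - \vs_2\bigr) = -2\vs_2/|\vs|^2$ and shows this same expression has vanishing $\vs_2$-derivative on $\{\vs_2=0,\vs_1\neq 0\}$. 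Combined with the asymptotic $\p_{\vs_2}Y(\vs) = \vs_2/|\vs|^2 + \Odr(|\vs|^{-3})$ coming from (\ref{3.15}), the residuals
\begin{equation*}
F := -2\p_{\vs_2}Y - \D Y_1^{(0)}, \qquad g := -\p_{\vs_2}Y_1^{(0)}\big|_{\Go}
\end{equation*}
are smooth with $F = \Odr(|\vs|^{-3})$ at infinity and $g$ compactly supported in $\Go$; moreover $Y_1^{(0)}$ vanishes on $\go$ by the cutoff. It remains to find $Y_2$ satisfying $\D Y_2 = F$ in $\{\vs_2>0\}$, $Y_2|_{\go}=0$, $\p_{\vs_2}Y_2|_{\Go}=g$, and $Y_2(\vs) = c + \Odr(|\vs|^{-1})$ at infinity.

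\textbf{Reduction to a strip.} I intend to treat the problem for $Y_2$ via the conformal map $z=\cos w$, $w=u-it$, which is a bijection from the strip $\Sigma := \{0<u<\pi,\ t>0\}$ onto $\{\vs_2>0\}$ and sends the bottom $\{t=0\}$ to $\go$, the two lateral rays $\{u=0\}$ and $\{u=\pi\}$ to the two components of $\Go$, with $|\vs|\asymp e^t/2$. Setting $\tilde Y_2 := Y_2\circ\cos$, the equation transforms into $\D_w \tilde Y_2 = |\sin w|^2(F\circ\cos)$ on $\Sigma$ with Dirichlet on the bottom and Neumann on the sides (the Neumann datum being the pullback of $g$, with an extra factor $|\sin w|$). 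Since $|\sin w|^2\asymp e^{2t}/4$ and $F\circ\cos = \Odr(e^{-3t})$, the transformed source decays like $e^{-t}$, and compact support of $g$ in $\vs$ translates into compact support in $t$.

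\textbf{Fourier analysis and conclusion.} After a smooth compactly supported lift absorbing the inhomogeneous Neumann data, I would expand $\tilde Y_2 = \sum_{n\geq 0} a_n(t)\cos(nu)$ in the Neumann eigenbasis on $(0,\pi)$. For $n\geq 1$ the ODE $a_n''-n^2 a_n = s_n(t)$ with $a_n(0)=0$ and exponentially decaying $s_n$ has a unique bounded solution $a_n(t) = \Odr(e^{-nt})$. For $n=0$ the equation $a_0''=s_0$ with $a_0(0)=0$ admits a one-parameter family; requiring $a_0'(+\infty)=0$ (forced by boundedness) gives the unique solution $a_0(t) = -\int_0^\infty \min(s,t)\,s_0(s)\,ds$, which satisfies $a_0(t) = c + \Odr(e^{-\alpha t})$ with $c := -\int_0^\infty s\,s_0(s)\,ds$ and some $\alpha>0$. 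Transferring through $e^{-t}\asymp|\vs|^{-1}$ then yields the asserted differentiable asymptotic (\ref{6.32}). The $C^\infty$-regularity of $Y_1$ on $\{\vs_2\geq 0\}\setminus\{(\pm 1,0)\}$ follows from classical elliptic regularity (as $F,g$ are smooth off the corners), and the $H^1(Q)$-bound at $(\pm 1,0)$---which are the images of the strip corners $(0,0)$ and $(\pi,0)$ where the boundary condition changes type---is a standard Dirichlet--Neumann corner estimate. The delicate step is the selection of the zero Fourier mode: integrability of $s_0$, guaranteed by the choice of ansatz (which subtracts off the $-2\vs_2/|\vs|^2$ part of the source so that the remainder lies in $L^1$ on the strip), is precisely what prevents a linear-in-$t$---equivalently $\log|\vs|$---term from appearing and leaves a single well-defined constant $c$ in the asymptotic.
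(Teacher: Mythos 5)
Your proposal is correct, but it takes a genuinely different route from the paper's. The paper produces an \emph{exact} global particular solution
$Y_1^{(1)}(\vs)=\vs_1\,\IM\ln(z+\sqrt{z^2-1})-\IM\sqrt{z^2-1}-\tfrac{\pi}{2}\vs_1$,
which already satisfies the equation $\D Y_1^{(1)}=-2\p_{\vs_2}Y$ on the whole half-plane (using $\D(\vs_1\,\IM h)=2\p_{\vs_1}\IM h=2\IM h'$ for $h$ holomorphic), together with the Neumann condition on $\Go$ and the asymptotics \eqref{6.32}; its only defect is the Dirichlet condition on $\go$. The correction $Y_1^{(2)}$ then solves a problem with compactly supported right-hand side, which the paper handles via the Kelvin transform to a half-disk and standard $H^1$ elliptic existence/regularity theory. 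You instead take the cutoff of the leading asymptotic profile itself, $Y_1^{(0)}=\chi(|\vs|)\big(\vs_1(\vp-\pi/2)-\vs_2\big)$, which yields a decaying (but nonzero) residual $F=\Odr(|\vs|^{-3})$; in fact your $g$ is identically zero, since $\p_{\vs_2}\big(\vs_1(\vp-\pi/2)-\vs_2\big)=-\vs_2^2/|\vs|^2$ vanishes on the whole of $\G^1$, so the Neumann data is not merely compactly supported but trivial. You then reduce to a half-strip via the conformal map $z=\cos w$ and diagonalize in the Neumann cosine basis, solving one ODE per mode and pinning down the constant $c=-\int_0^\infty s\,s_0(s)\,ds$ from the zero mode. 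What the paper's approach buys is that there is no PDE residual at all, so the correction problem has compactly supported data and abstract theory applies immediately; what yours buys is that no special exact particular solution needs to be guessed, the constant $c$ is produced explicitly, and the strip/Fourier picture makes both the decay rate and the differentiability of \eqref{6.32} transparent. One small point you should spell out to make the sketch rigorous is that the $O(|\vs|^{-1})$ remainder requires controlling the sum over modes, i.e.\ the constants in $a_n(t)=\Odr(e^{-nt})$ need to be summable in $n$; this follows from the smoothness in $u$ of the transformed source but is worth stating. The $H^1$ claim at the corners $(\pm 1,0)$ is clean in your setup since after the conformal map the Jacobian $|\sin w|^2$ kills the $r^{-1/2}$ singularity of $\p_{\vs_2}Y$ at those points.
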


\begin{proof}
Denote
\begin{equation*}
Y^{(1)}_1(\vs):=\vs_1\IM\ln\big(z+\sqrt{z^2-1}\big)-\IM\sqrt{z^2-1}-\frac{\pi}{2} \vs_1.
\end{equation*}
It is easy to check that this function solves the equation in (\ref{6.31}), satisfies the boundary condition on $\Go$ in (\ref{6.31}) and the asymptotics (\ref{6.32}), and belongs to $\H^1(Q)\cap C^\infty(\{\vs: \vs_2\geqslant 0\}\setminus\{\vs: \vs_1=\pm 1,\ \vs_2=0\})$.

Let $\chi_1=\chi_1(t)$ be an infinitely  differentiable cut-off function with values in $[0,1]$, being one as $t>3$  and vanishing as $t<2$. We construct the solution to the problem (\ref{6.31}), (\ref{6.32}) as
\begin{equation*}
Y_1(\vs)=Y^{(1)}_1(\vs)+\chi_1(|\vs|)Y^{(2)}_1(\vs).
\end{equation*}
The function $Y^{(2)}_1$ should solve the problem
\begin{equation}\label{6.34}
\begin{aligned}
&\D Y^{(2)}_1=-2(1-\chi_1)\frac{\p Y}{\p\vs_2}-2\nabla\chi_1\cdot\nabla Y^{(1)}_1-Y^{(1)}_1\D \chi_1, \quad\vs_2>0,
\\
& Y^{(2)}_1=0,\quad\vs\in\go,\quad\frac{\p Y^{(2)}_1}{\p\vs_2}=0,\quad \vs\in\Go,
\\
&Y^{(2)}_1(\vs)=o(|\vs|),\quad \vs\to\infty.
\end{aligned}
\end{equation}
The right hand side in the equation of this problem is compactly supported due to the definition of $\chi_1$ and belongs to  $L_2(\{\vs: \vs_2>0\})\cap C^\infty(\{\vs: \vs_2\geqslant 0\}\setminus\{\vs: \vs_1=\pm 1,\ \vs_2=0\})$.

At the next step we make the Kelvin transform and pass to new variables
\begin{equation*}
y=(y_1,y_2),\quad y_1=\frac{\vs_1}{\vs_1^2+(\vs_2+1)^2},\quad y_2=\frac{\vs_2+1}{\vs_1^2+(\vs_2+1)^2}.
\end{equation*}
Under this transform the boundary value problem for $Y_1^{(2)}$ casts into the boundary value problem for the Poisson equation in the disk $\{y: y_1^2+(y_2-1/2)^2<1/2\}$ with a combination of the homogeneous Dirichlet and Neumann condition. The right hand side in this equation is compactly supported with the support separated from zero and it belongs to $L_2$. Hence, we can apply standard theory of generalized solutions to elliptic boundary value problems, see, for instance, \cite[Ch. I\!V]{Mi}. In accordance with this theory, the obtained boundary value problem is solvable in $\H^1(\{y: y_1^2+(y_2-1/2)^2<1/2\})$. By the smoothness improving theorems, the solution to this problem is infinitely differentiable in a vicinity of zero. Returning back to the problem (\ref{6.34}), we conclude that it is solvable in $\H^1(Q)$ within the class of bounded at infinity functions. Hence, the problem (\ref{6.31}) is solvable and has the solution with the asymptotics (\ref{6.32}). By the smoothness improving theorems the solution belongs $\H^1(Q)\cap C^\infty(\{\vs: \vs_2\geqslant 0\}\setminus\{\vs: \vs_1=\pm 1,\ \vs_2=0\})$.
\end{proof}

In view of the proven lemma the solution to the problem (\ref{6.30a}), (\ref{6.27}) is
\begin{equation*}
\psin_1(\vs,\e,\L)=\e b\big(\sqrt{\L}\cos\sqrt{\L}\pi+b\sin\sqrt{\L}\pi\big) Y_1(\vs). 
\end{equation*}
The formal construction of the asymptotics is completed.

Denote
\begin{equation*}
\Pso(x):=\big(\pex(x,\L)+\chi(x_2)\pbl(\xi,\L)\big) \big(1-\chi\big) + \chi(|\vs|\eta^{1/2})\psin(\vs,\L),
\end{equation*}
where $\L=\L(\e,\mu(\e))$ is the root to the equation (\ref{1.31}). The proof of the next lemma is analogous to that of Lemma~5.3 in \cite{BBC1} and is based on direct calculations.

\begin{lemma}\label{lm6.4}
The function $\Pso$ belongs to $C^\infty(\overline{\Om}_\e\setminus\{x: x_1=\pm\e\eta,\ x_2=0\})$ and to the domain of $\Hpe(0)$. It satisfies the convergence
\begin{equation*}
\big\|\Pso-\sin\sqrt{\L_1(0)}(x_2-\pi)\big\|_{L_2(\Om_\e)}\to0,\quad\e\to+0,
\end{equation*}
and solves the equation
\begin{equation*}
\big(\Hpe(0)-\L\big)\Pso=h_\e,
\end{equation*}
where for the function $h_\e\in L_2(\Om_\e)$ the uniform in $\e$ estimate
\begin{equation*}
\|h_\e\|_{L_2(\Om_\e)}\leqslant C\Big((K+\mu)\E^{-2\e^{-1}}+\eta^{1/2}(K+\mu)+\e^{1/2}\eta^{1/2}(K+\mu)^{1/2}
\Big)
\end{equation*}
holds true.
\end{lemma}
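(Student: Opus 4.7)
The plan is to verify the three claims in order: membership in the domain of $\Hpe(0)$, the $L_2$-convergence, and the residual estimate; the last is the main work.

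First, I would check the regularity and boundary conditions. By construction, $\pex$ is smooth in $\overline{\Om_\e}$; the function $X$ of Lemma~\ref{lm6.1} is smooth away from the lattice points $(\pi j,0)$, so $\pbl$ is smooth in $\overline{\Om_\e}$ except at the origin; and $\psin_0$, $\psin_1$ are smooth on $\{\vs_2\geqslant 0\}$ away from $\vs=(\pm 1,0)$, which corresponds to $x_1=\pm\e\eta$, $x_2=0$. The cut-off $\chi(x_2)$ truncates the boundary layer far from $\Gp_\e$, while $\chi(|\vs|\eta^{1/2})$ is supported in a $\Odr(\e\eta^{1/2})$-neighbourhood of the origin where the inner expansion is used. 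The Dirichlet condition on $\Gp_+\cup\gp_\e$ and the periodic conditions on the lateral sides follow immediately from the definitions of $\pex$, $\pbl$, $\psin_i$; the Robin condition on $\Gp_\e$ is satisfied by $\pex+\pbl$ by the very construction (\ref{6.7}) while the inner expansion part is cut off before it meets $\Gp_\e$. Thus $\Pso$ lies in the domain of $\Hpe(0)$.

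For the $L_2$-convergence I would argue that on the bulk of $\Om_\e$ only the outer expansion contributes, since $\pbl$ decays exponentially in $\xi_2$ by Lemma~\ref{lm6.2} and the inner expansion is confined to a region of area $\Odr(\e^2\eta)$. Using that $\L(\e,\mu(\e))\to\L_1(0)$ by (\ref{6.1}) and (\ref{1.11}), the continuity of $\pex$ in $\L$ gives the claimed convergence.

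The heart of the proof is the residual estimate. I would write $h_\e=(\Hpe(0)-\L)\Pso$ and split $\Om_\e$ into three zones: the \emph{outer zone} $\{\chi(|\vs|\eta^{1/2})=0\}\cap\{\chi(x_2)=1\text{ or }=0\}$, the \emph{inner zone} $\{\chi(|\vs|\eta^{1/2})=1\}$, and the two transition strips where the cut-offs $\chi(x_2)$ and $\chi(|\vs|\eta^{1/2})$ are non-constant. In the outer zone with $\chi(x_2)=1$, one has $\Pso=\pex+\pbl$; here $(-\D-\L)\pex=0$ and $(-\D-\L)\pbl$ vanishes exactly by (\ref{6.8}), so $h_\e\equiv 0$. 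In the region where $\chi(x_2)$ switches off, $\pbl$ is exponentially small in $\e^{-1}$ by the uniform decay in Lemma~\ref{lm6.2}, producing the term $\Odr((K+\mu)\E^{-2\e^{-1}})$. In the inner zone $\Pso=\psin_0+\e\eta\,\psin_1$ modulo the factor $\E^{\e\eta b\vs_2}$; by (\ref{6.30})--(\ref{6.30a}) and the explicit form of the Laplacian in $\vs$-variables (which carries an $\eta^{-2}$ prefactor), the leading orders cancel and the residual is $\Odr(\e\eta(K+\mu))$ in amplitude on a set of measure $\Odr(\e^2\eta)$, producing $\Odr(\e^{1/2}\eta^{1/2}(K+\mu))$ in $L_2$. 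The delicate step is the matching annulus where $\chi(|\vs|\eta^{1/2})$ varies: here I would substitute the asymptotic expansions (\ref{6.26})--(\ref{6.27}) for $\pex+\pbl$ and the asymptotics (\ref{3.15}), (\ref{6.32}) for $\psin_0$, $\psin_1$, and use (\ref{1.31}) (which is the matching condition) to cancel the logarithmic and constant terms. The remainders are $\Odr(\e\eta^2|\vs|^2(|\ln|\vs||+\e^{-1}(K+\mu)))$ and $\Odr(|\vs|^{-1})$ respectively, and after multiplying by the derivatives of $\chi(|\vs|\eta^{1/2})$ (of size $\eta^{1/2}$ and $\eta$) and integrating over the annulus of area $\Odr(\e^2\eta^{-1})$, one obtains a contribution of $\Odr(\eta^{1/2}(K+\mu)+\e^{1/2}\eta^{1/2}(K+\mu)^{1/2})$. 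Summing the three contributions yields the stated bound; the main obstacle throughout is bookkeeping the cancellations that come from (\ref{1.31}) and the $\E^{\e\eta b\vs_2}$ factor, since the matched asymptotic terms only cancel after this exponential is taken into account.
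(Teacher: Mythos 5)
The paper itself only says the proof is ``analogous to that of Lemma~5.3 in \cite{BBC1} and is based on direct calculations'', so there is no proof in the text to compare against line by line. Your high-level strategy --- check boundary conditions, argue $L_2$-convergence from the localization of the correctors, and split the residual into an outer region, the $\chi(x_2)$-transition strip, a matching annulus, and the inner zone --- is exactly the standard matched-asymptotics scheme used there, and the identification of where each term of the final bound comes from (the exponential from the $\chi(x_2)$ cut-off, $\eta^{1/2}(K+\mu)$ and $\e^{1/2}\eta^{1/2}(K+\mu)^{1/2}$ from the matching annulus) is correct in spirit.

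However, several of your quantitative claims do not hold up. First, you assert that the inner expansion ``is cut off before it meets $\Gp_\e$''; this is false, since the inner region $|x|\lesssim\e\eta^{1/2}$ is much wider than $\gp_\e=\{|x_1|<\e\eta\}$, so the inner ansatz does touch $\Gp_\e$. The Robin condition still holds there, but only because $\psin_0,\psin_1$ satisfy the Neumann condition on $\Go$, so that $(\partial_{x_2}-b)\,\E^{\e\eta b\vs_2}(\psin_0+\e\eta\psin_1)$ vanishes; this cancellation must be stated, not hidden. Second, your inner-zone bookkeeping is internally inconsistent: an amplitude $\Odr(\e\eta(K+\mu))$ over a set of $x$-area $\Odr(\e^2\eta)$ yields an $L_2$-contribution of order $\e\eta(K+\mu)\cdot\e\eta^{1/2}=\e^2\eta^{3/2}(K+\mu)$, not the $\e^{1/2}\eta^{1/2}(K+\mu)$ you write (and the amplitude you quote is also not obviously right: a direct computation of $(-\D_x-\L)\big[\E^{\e\eta b\vs_2}(\psin_0+\e\eta\psin_1)\big]$ in the inner zone gives $\Odr(\e(K+\mu)\ln|\vs|)$ as the leading residual, which also does not match). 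Third, the matching annulus $\{|\vs|\sim\eta^{-1/2}\}$ has $x$-area $\Odr\big((\e\eta\cdot\eta^{-1/2})^2\big)=\Odr(\e^2\eta)$, not $\Odr(\e^2\eta^{-1})$ as you write; and the derivatives of $\chi(|\vs|\eta^{1/2})$ with respect to $x$ scale like $(\e\eta)^{-1}\eta^{1/2}=(\e\eta^{1/2})^{-1}$ and its square, not ``$\eta^{1/2}$ and $\eta$''. These scalings are precisely where the two powers $\eta^{1/2}(K+\mu)$ and $\e^{1/2}\eta^{1/2}(K+\mu)^{1/2}$ of the final bound come from, so getting them wrong leaves the core estimate unjustified. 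One more point worth being explicit about: from the matching identity (\ref{1.31}) you get exact cancellation of the constant in $\psin_0-\Psin_0$, but in $\psin_1-\Psin_1$ the constant $c$ from (\ref{6.32}) is \emph{not} matched; it is only harmless because it appears with a prefactor $\e\eta\cdot\e(K+\mu)$. That should be noted, since it is the kind of unmatched term that can sink the argument if overlooked.
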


Employing this lemma and proceeding as in \cite[Sec. 5]{BBC1}, one can easily check (\ref{1.34}).

\section*{Acknowledgments}

This work was initiated during the stay of D. Borisov in Universit\'{e} de Lorraine, Metz. He is thankful for the hospitality extended to him.

\end{document}